\pgfplotsset{compat=1.18}
\newtheorem{theorem}{Theorem}
\newtheorem{definition}{Definition}
\newtheorem{corollary}{Corollary}
\newcommand{\R}{\mathbb{R}}
\newcommand{\E}[2]{\mathbb{E}_{#1\sim #2}}
\title{PDPO: Parametric Density Path Optimization}
\author{%
  Sebastian Gutierrez Hernandez$^1$ \quad 
   Peng Chen$^2$ \quad
  Haomin Zhou$^1$ \\[0.5em]
  $^1$School of Mathematics, Georgia Institute of Technology \\
  $^2$School of Computational Science and Engineering, Georgia Institute of Technology \\[0.3em]
  \texttt{\{shern3, pchen402 ,hmzhou\}@gatech.edu}
}
\begin{document}

\maketitle

\begin{abstract}

We introduce Parametric Density Path Optimization (PDPO), a novel method for computing action-minimizing paths between probability densities. The core idea is to represent the target probability path as the pushforward of a reference density through a parametric map, transforming the original infinite-dimensional optimization over densities to a finite-dimensional one over the parameters of the map. We derive a static formulation of the dynamic problem of action minimization and propose cubic spline interpolation of the path in parameter space to solve the static problem. Theoretically, we establish an error bound of the action under proper assumptions on the regularity of the parameter path. Empirically, we find that using 3–5 control points of the spline interpolation suffices to accurately resolve both multimodal and high-dimensional problems. We demonstrate that
PDPO can flexibly accommodate a wide range of potential terms, including those modeling obstacles, mean-field interactions, stochastic control, and higher-order dynamics. Our method outperforms existing state-of-the-art approaches in benchmark tasks, demonstrating superior computational efficiency and solution quality.
Source code \url{https://github.com/SebasGutHdz/PDPO/tree/main} . 
\end{abstract}
\section{Introduction}
Optimal transport (OT) theory has become a powerful and widely used framework for quantifying discrepancies and constructing interpolations between probability distributions, with rapidly expanding applications in machine learning, statistics, and computational science \cite{peyre_computational_2020}. It offers a principled way to compare distributions by computing the minimal cost required to transform one distribution into another.

While the computation of optimal transport (OT) plans and geodesics between probability distributions is now relatively well understood \cite{tong_improving_2023,liu_natural_2024,amos_input_2017,xie_scalable_2019,onken_ot-flow_2021,huang_bridging_2023}, many real-world applications require solving more complex, constrained transport problems—specifically, identifying optimal paths between distributions subject to additional requirements. For instance, one may need to guide a swarm of robots from one configuration to another while avoiding obstacles, or interpolate between data distributions in a way that preserves semantic structure \cite{liu_generalized_2024, pooladian_neural_2024, lin_alternating_2021}. 

These tasks can be formulated as action-minimizing problems governed by the ``principle of least action'': Given initial and terminal densities $\rho_0, \rho_1 \in \mathcal{P}(\mathbb{R}^d)$, where $\mathcal{P}(\mathbb{R}^d)$ denotes the space of probability densities over $\mathbb{R}^d$, the objective is to find a time-dependent density path $\rho(t,\cdot) \in \mathcal{P}(\mathbb{R}^d)$ and a continuous velocity field $v : [0,1] \times \mathbb{R}^d \to \mathbb{R}^d$ that minimize the action functional:
\begin{align}
&\inf_{\rho,v} \int_0^1 \int_{\mathbb{R}^d} K(\rho,v) \, dx + F(\rho(t)) \, dt \\
&\text{subject to } \quad \partial_t \rho + \nabla \cdot (\rho v) = 0, \quad \rho(0, \cdot) = \rho_0, \quad \rho(1, \cdot) = \rho_1.
\end{align}
Here, $K(\rho,v)$ denotes the transportation energy, and $F(\rho)$ represents a potential term that captures interactions among particles or with the environment. When $K(\rho,v) = \frac{1}{2}|v|^2\rho$ and $F(\rho) = 0$, this reduces to the classical Wasserstein geodesic problem.
Directly solving such action-minimizing problems poses substantial mathematical and computational challenges. In low dimensions (e.g., 2 or 3), the associated PDE system—derived from the first-order optimality conditions—can be tackled using classical numerical methods \cite{achdou_mean_2010, benamou_augmented_2015, benton2024error, jacobs_solving_2019}. However, these approaches do not scale well with dimension and become computationally infeasible in high-dimensional settings.

Recent advances in machine learning have greatly expanded the range of high-dimensional problems that can be tackled effectively. A leading example is the Generalized Schr\"odinger Bridge Method (GSBM) \cite{liu_generalized_2024}, originally developed for stochastic optimal control (SOC) problems \cite{mikami_stochastic_2021}. GSBM learns forward and backward vector fields by modeling conditional densities and velocities, drawing inspiration from stochastic interpolants \cite{albergo_stochastic_2023}, and approximates Gaussian path statistics via spline-based optimization. In parallel, \cite{pooladian_neural_2024} introduced an algorithm for problems with linear energy potentials, leveraging Kantorovich duality and amortized inference to efficiently compute c-transforms and transportation costs. For Mean-Field Games, APAC-Net \cite{lin_alternating_2021} casts the primal-dual formulation as a convex-concave saddle point problem, trained using a GAN-style adversarial framework.

\begin{figure}[b]
    \centering
    \begin{minipage}[t]{0.67\linewidth}
\vspace*{\fill}
        \centering
        \subfloat[The space of probability densities $\mathcal{P}(\mathbb{R}^d)$ (top), the admissible parameter set $\Theta \subseteq \mathbb{R}^d$ (left), and the sampling space $\mathbb{R}^d$ with an obstacle (right)]{\includegraphics[width=\linewidth]{Figures/figure.tex}\label{fig:sketch}}
        \vspace*{\fill}
    \end{minipage}
    \hfill
    \begin{minipage}[t]{0.25\linewidth}
        \centering
        \subfloat[Control points]{\includegraphics[width=.8\linewidth]{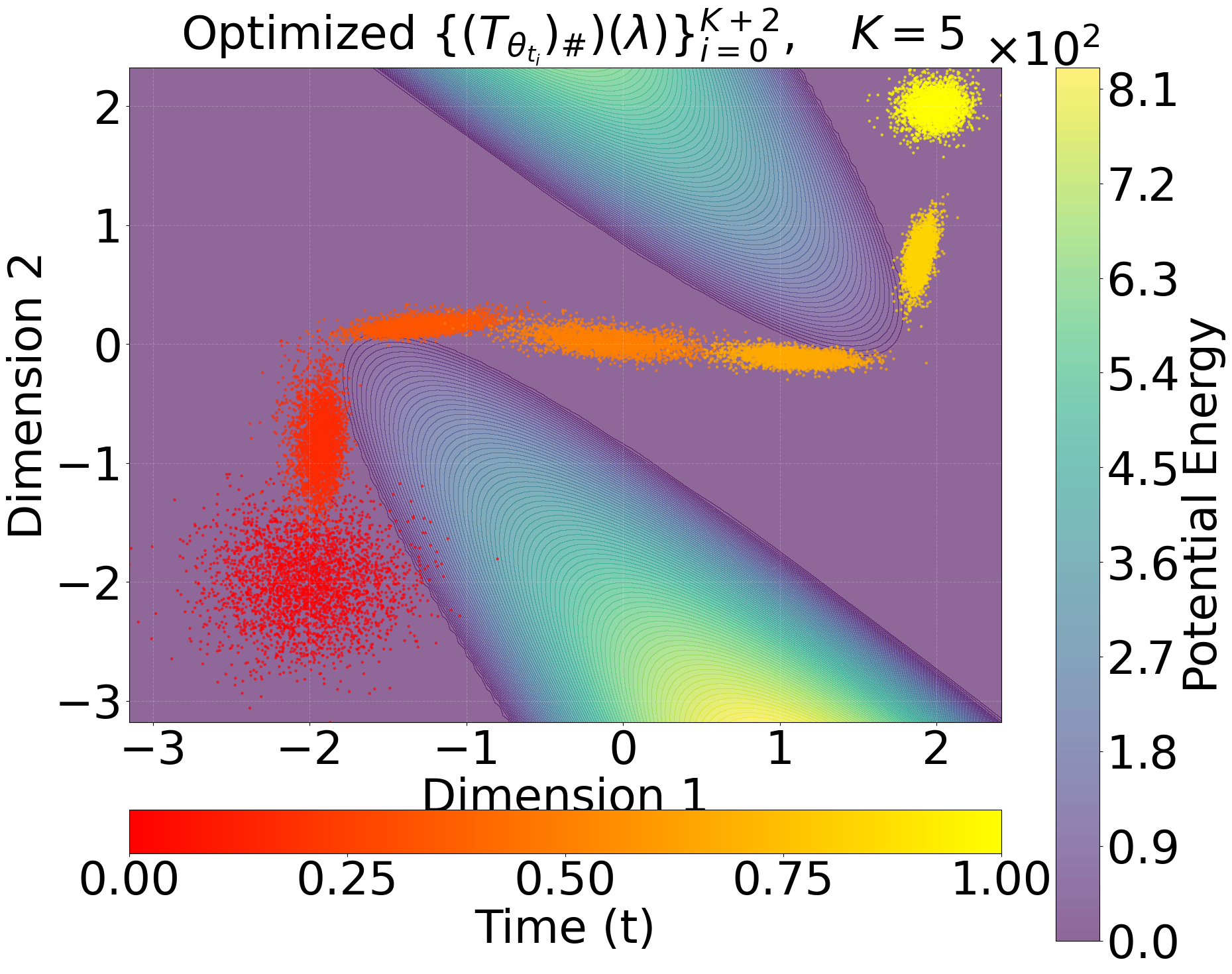}\label{fig:ex_control}}
        
        \subfloat[Density path]{\includegraphics[width=.8\linewidth]{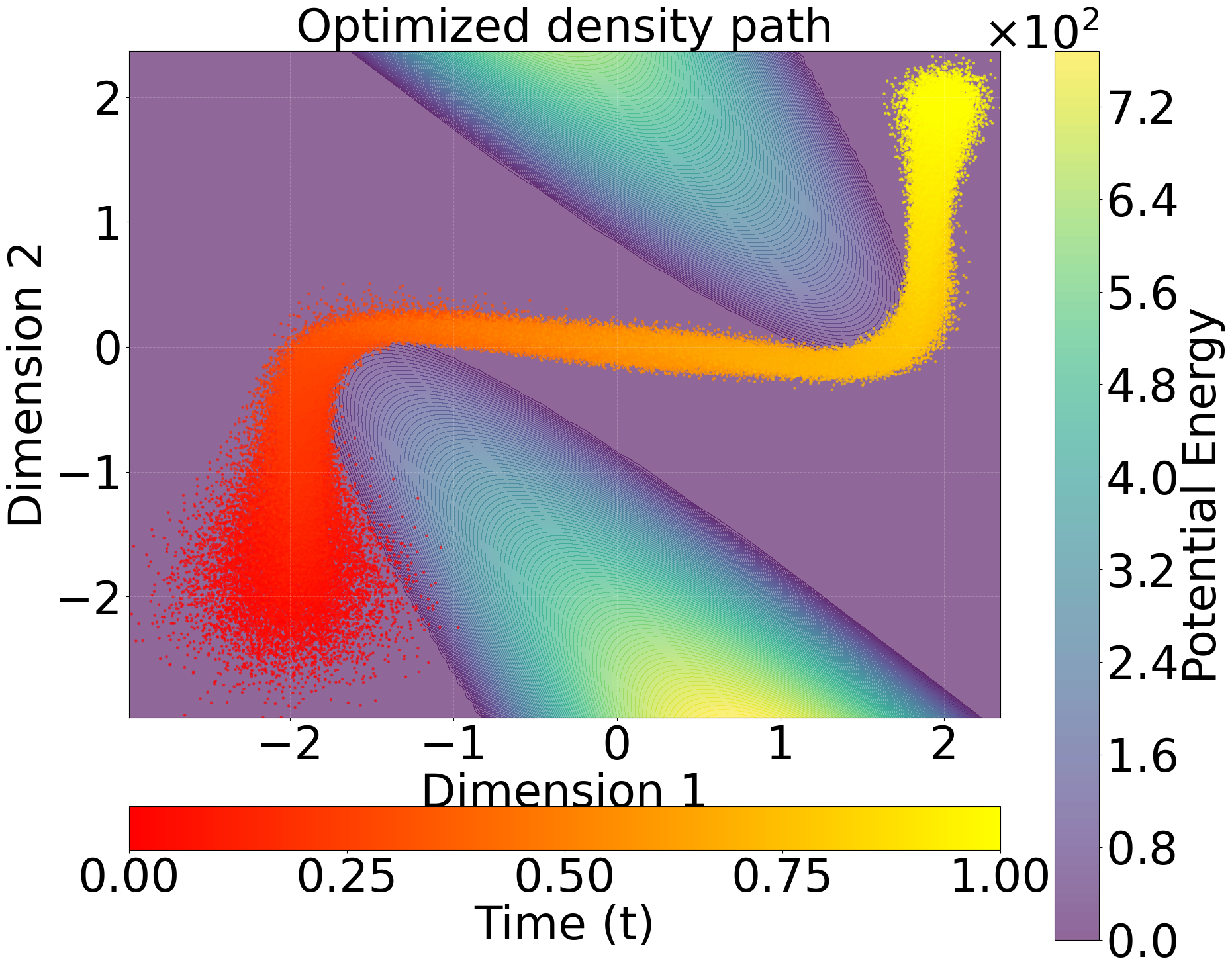}\label{fig:ex_path}}
    \end{minipage}
    \caption{Visualization of our framework. (a) Illustrates the three main components: the space of probability densities $\mathcal{P}(\mathbb{R}^d)$ (top), the admissible parameter space $\Theta \subseteq \mathbb{R}^d$ (left), and the sampling space $\mathbb{R}^d$ containing an obstacle (right).
(b) and (c) depict the pushforwards of the optimized control points $(T_{\theta_i})_{\#}\,\lambda$ for $i = 0, \ldots, K$ and the continuous spline trajectory $(T_{\theta(t)})_{\#}\,\lambda$, respectively. Time is color-coded, with red indicating $t = 0$ and yellow indicating $t = 1$.}
    \label{fig:merged}
\end{figure}

Our method builds upon prior work on parametric probability distributions \cite{liu_neural_2022, wu_parameterized_2025, jin_parameterized_2025}, extending these ideas to address boundary-valued action-minimizing density problems. Our formulation is an extension of the \textit{static OT}  formulation to the parameter space, whereas in \cite{li_constrained_2018}, they extend the \textit{dynamic formulation}.  In the remainder of this section, we outline our methodology, with full technical details deferred to Section~\ref{sec:main}.

Figure~\ref{fig:sketch} offers an overview of our approach: curves in parameter space $\theta(t) \in \Theta$ (left) induce density paths $\rho_t \in \mathcal{P}(\mathbb{R}^d)$ via pushforward maps $\rho_t = (T_{\theta(t)})_{\#}\,\lambda$ from a fixed reference density $\lambda$ (top), while samples of these densities are obtained via direct evaluation $T_{\theta(t)}(z)$ with $z \sim \lambda$ (right).

While spline-based methods have previously been applied to particle-level density transport \cite{pooladian_neural_2024} and Gaussian settings \cite{liu_generalized_2024}, a key innovation of our work is the use of cubic Hermite splines in the parameter space of a neural network. We fix a set of control points $\{\theta_{t_i}\}_{i = 0}^{K+1}$ with $t_i = i/(K+1)$ and optimize over them to discover action-minimizing density paths, which presents the first spline-based parametric approach in a learned map setting. 
Figures~\ref{fig:ex_control} and \ref{fig:ex_path} illustrate our method on an obstacle avoidance task, showing the pushforward of $\lambda$ at the optimized control points and the resulting continuous path of probability densities.

The optimization of the $K+2$ control points proceeds in the following steps: \textbf{Step 1:} Initialize a pair of boundary parameters, $\theta_0$ and $\theta_1$, that accurately approximate the target boundary densities $\rho_0$ and $\rho_1$. \textbf{Step 2:} With the boundary parameters $(\theta_0, \theta_1)$ fixed, optimize the interior control points $\{\theta_{t_i}\}_{i=1}^{K+1}$ to minimize the action associated with the coupling $(T_{\theta_0}, T_{\theta_1})_{\#}\lambda$. \textbf{Step 3:} Optimize the boundary parameters $(\theta_0, \theta_1)$ in order to improve the quality of the coupling while keeping the interior control points fixed. Repeat steps 2 and 3 iteratively until convergence.

\textbf{Contributions}. Our main contributions are: (1) We introduce Parametric Density Path Optimization (PDPO), a novel framework that transforms infinite-dimensional density path optimization into a finite-dimensional problem by parameterizing pushforward maps with cubic Hermite splines. Our approach achieves accurate density paths with as few as 3–5 control points. (2) We generalize the parametric framework to support a wide class of action-minimizing problems, including stochastic optimal control (via Fisher information), acceleration-regularized dynamics, and mean-field interactions, within a unified optimization scheme based on learned transport maps. (3) We empirically validate PDPO across several challenging benchmarks—including obstacle avoidance, entropy-constrained transport, and high-dimensional opinion dynamics—achieving up to 7\% lower action, up to 10$\times$ improved boundary accuracy, and 40–80\% faster runtimes compared to state-of-the-art baselines.

\section{Background}
This section presents the mathematical foundations of optimal transport, its dynamic formulation, and parametric methods that enable scalable and efficient application in machine learning contexts.

\subsection{Optimal transport and Wasserstein distances}
In what follows, we only consider distributions $\mu$ that are Lebesgue dominated, thus there is a function $\rho:\R^d\to\R_{\geq0}$ such that for any measurable set $A,$ $\mu(A)= \int_{A}\rho(x)dx.$ Since we are restricting ourselves to this case, we use the terms distribution and density interchangeably for convenience. 

The Wasserstein-2 distance between two probability distributions $\rho_0, \rho_1 \in \mathcal{P}(\mathbb{R}^d)$ is defined as:
\begin{equation}
W_2^2(\rho_0, \rho_1) = \inf_{\pi \in \Pi(\rho_0, \rho_1)} \int_{\mathbb{R}^d \times \mathbb{R}^d} \|x - y\|^2 d\pi(x, y),
\label{eq:wass_met}
\end{equation}
where $\Pi(\rho_0, \rho_1)$ denotes the set of all joint distributions with marginals $\rho_0$ and $\rho_1$ \cite{villani_optimal_2009}.

When $\rho_0$ is absolutely continuous with respect to the Lebesgue measure, the Benamou–Brenier formulation \cite{benamou_computational_2000} provides a dynamic reinterpretation of this distance:
\begin{align}
W_2^2(\rho_0, \rho_1) &= \inf_{\rho,v} \int_0^1 \int_{\mathbb{R}^d} \frac{1}{2} \|v(t, x)\|^2 \rho(t, x) dx dt \label{eq:dynamic_ot} \\
&\text{subject to } \quad\partial_t \rho + \nabla \cdot (\rho v) = 0, \quad \rho(0, \cdot) = \rho_0, \quad \rho(1, \cdot) = \rho_1.
\end{align}
This dynamical formulation interprets the Wasserstein distance as the minimum kinetic energy required to continuously transport the mass of $\rho_0$ into $\rho_1$ over the time interval $[0,1]$.

\subsection{Generalized action-minimizing problems in Wasserstein space}

We consider the following generalized action-minimizing problem:
\begin{align}
&\inf_{\rho,v} \int_0^1 \int_{\mathbb{R}^d}\frac{1}{2}\|v(t, x)\|^2\rho(t, x)dx + F(\rho(t)) \, dt
\label{eq:action_min_det}\\
&\quad\text{subject to } \quad\partial_t\rho + \nabla \cdot (\rho v) = 0, \quad \rho(0, \cdot) = \rho_0, \quad \rho(1, \cdot) = \rho_1, \nonumber
\end{align} 
where the functional $F(\rho)$ incorporates additional constraints or potentials, defined as:
\begin{equation}
F(\rho) =\kappa_0\int_{\mathbb{R}^d}V(x)\rho(x)dx +  \kappa_1\int_{\mathbb{R}^d}U(\rho(x))dx + \kappa_2\int_{\mathbb{R}^d\times\mathbb{R}^d}W(x-y)\rho(x)\rho(y)dxdy.
\end{equation}
Here, the three terms correspond respectively to: external potentials (e.g., obstacles or environmental forces), internal energy (e.g., entropy or Fisher information), and interaction energy (e.g., repulsion or attraction between mass elements). The constants $\kappa_0,\kappa_1,\kappa_2\in \R_+$ model the strength of each term.

In our setup, the Fisher Information potential,
given by 
\begin{align}
    \mathcal{FI}(\rho) = \frac{\sigma^4}{8}\int \|\nabla\log\rho\|^2\rho(x)dx,
\end{align}
plays a crucial role. As shown in prior work \cite{chen_relation_2016, leger_hopf-cole_2019} and detailed in Appendix~\ref{app:det_soc}, it is well established that the following deterministic control problem:
\begin{align*}
&\inf_{\rho,v} \int_0^1 \int_{\mathbb{R}^d}\frac{1}{2}\|v(t, x)\|^2\rho(t, x)dx + F(\rho(t)) + \mathcal{FI}(\rho(t))\, dt
\\
&\quad \text{subject to } \quad\partial_t\rho + \nabla \cdot (\rho v) = 0, \quad \rho(0, \cdot) = \rho_0, \quad \rho(1, \cdot) = \rho_1,\nonumber
\end{align*} 
is equivalent to the stochastic optimal control (SOC) problem:
\begin{align*}
&\inf_{\rho,v} \int_0^1 \int_{\mathbb{R}^d}\frac{1}{2}\|v(t, x)\|^2\rho(t, x)dx + F(\rho(t)) \, dt\\
&\quad \text{subject to } \quad\partial_t\rho + \nabla \cdot (\rho v) = \frac{\sigma^2}{2}\Delta\rho, \quad \rho(0, \cdot) = \rho_0, \quad \rho(1, \cdot) = \rho_1. 
\end{align*}
This equivalence allows us to recast SOC problems as deterministic action-minimization tasks augmented with a Fisher Information regularization term. While the deterministic action-minimization formulation offers a principled framework for modeling action-minimizing dynamics, directly solving the resulting infinite-dimensional optimization problem over probability densities is computationally prohibitive in high dimensions. To address this, we next introduce parametric representations that reduce the problem to a finite-dimensional setting, enabling tractable and scalable computation.

\subsection{Parametric pushforward representations of probability densities}

To render action-minimizing problems computationally tractable, we focus on parameterized families of probability distributions, thereby reducing the original infinite-dimensional optimization problem to a finite-dimensional one over the parameter space. This strategy builds on recent developments in parameterized Wasserstein dynamics for solving initial-value Hamiltonian systems and gradient flows \cite{wu_parameterized_2025, jin_parameterized_2025}.

The work most closely related to our approach is that of \cite{bunne_schrodinger_2023}, which formulates the Schrödinger Bridge (SB) problem between Gaussian distributions as an action-minimization task. Leveraging the fact that the SB solution between Gaussians remains Gaussian, they parameterize the evolving density by its mean and covariance, and derive Euler–Lagrange equations for these parameters by projecting the density-level optimality conditions onto the parametric space.

We now introduce the core elements of our parametric pushforward formulation, starting with the definition of some key concepts. For further background, we refer the reader to \cite{wu_parameterized_2025, li_constrained_2018}.

\begin{definition}
We consider a \textbf{parameter space} $\Theta \subseteq \mathbb{R}^D$ of dimension $D$. We call $T: \mathbb{R}^d \times \Theta \to \mathbb{R}^d$ a \textbf{parametric mapping}, if for every $\theta \in \Theta$, the function $T(\cdot,\theta)$, or $T_{\theta}(\cdot)$ used interchangeably, is Lebesgue measurable. The corresponding \textbf{parametric density space} is defined as
    \begin{align*}
    P_{\Theta} := \{ \rho_{\theta} = (T_{\theta})_{\#}\lambda: \theta\in \Theta\}\subseteq \mathcal{P}(\R^d),
\end{align*}
where $(T_{\theta})_{\#} \lambda$ denotes the pushforward of a fixed reference density $\lambda$ under the map $T_{\theta}$, see Figure~\ref{fig:sketch}(top). Moreover, given a curve in the parameter space $\theta(t)\in \Theta$ indicated by $t \in [0,1]$, the corresponding \textbf{pushforward density path} is defined as $\rho_{\theta(t)}:= (T_{\theta(t)})_{\#} \lambda$, see Figure \ref{fig:sketch}(top), and the \textbf{sample trajectories} are given by
    $x_{\theta(t)}(z) = T_{\theta(t)}(z), z\sim \lambda$,
see Figure \ref{fig:sketch}(right). 
\end{definition}
 
\section{Action Minimizing Problems via Parametric Pushforwards}
\label{sec:main}
\subsection{Projection of the action-minimizing problem to the parametric setting}
To motivate our framework, we begin by revisiting an alternative formulation of the classical Wasserstein-2 distance defined in \eqref{eq:wass_met}. Given $x,y\in \R^d$, define the set of admissible paths $\Gamma(x,y):= \left\{\gamma(t) \in C^1([0,1],\R^d): \gamma(0) = x, \gamma(1) = y\right\}$. Then the classical formula
\begin{align*}
    \|x-y\|^2 = \inf_{\gamma(t)\in \Gamma(x,y)} \int_0^1\|\gamma'(t)\|^2dt,
\end{align*}
allows us to write the Wasserstein-2 distance as 
\begin{align}
W_2^2(\rho_0,\rho_1) = \inf_{\pi \in \Pi(\rho_0, \rho_1)} \int_{\mathbb{R}^d \times \mathbb{R}^d} \inf_{\gamma(t)\in \Gamma(x,y)} \int_0^1\|\gamma'(t)\|^2 \,dt \, d\pi(x, y).
\label{eq:kant}
\end{align}
In other words, one first chooses a coupling $\pi$ between the endpoints and then pays, for each pair $(x,y)$, the minimal kinetic energy needed to drive a particle from $x$ to $y$. Our framework adopts exactly this viewpoint: we cast the dynamic optimization in \eqref{eq:action_min_det} as a static coupling problem whose cost is still evaluated via a dynamic (least‐action) criterion.

For generalized action-minimizing problems, the action of a path is 
\begin{align}
    \mathcal{A}(\gamma) = \mathbb{E}_{x\sim \rho_0}\left[\int_0^1\bigg\|\frac{d}{dt}\gamma_t(x)\bigg\|^2dt\right] + \int_0^1F(\rho_{\gamma_t})dt.
    \label{eq:action_diffeo}
\end{align}
We make the following restrictions to guarantee the existence of the density in all the trajectories of a coupling. Define the set 
\begin{align*}
    \mathcal{T}_{\rho_0,\rho_1} = \{T:\mathbb{R}^d \to \mathbb{R}^d : T \text{ diffeomorphic}, T_\#(\rho_0) = \rho_1, \det(DT) > 0\}.
\end{align*}
Since the $\rho_0$ and $\rho_1$ have densities, the Monge map $T^*$ is a diffeomorphism that satisfies $\det(DT) > 0$ \cite{benamou_computational_2000} and the set $\mathcal{T}_{\rho_0,\rho_1} \neq \emptyset.$ If $T\in \mathcal{T}_{\rho_0,\rho_1}$ then it is smoothly isotopic to the identity $id$, see Lemma 2, Chapter 6 \cite{brown_topology_1967}. By the definition of diffeomorphism being smoothly isotopic, for maps $T\in\mathcal{T}_{\rho_0,\rho_1},$ the set $\Gamma(\pi)$ consisting of the flow maps $\gamma \in C^1([0,1]\times\R^d,\R^d)$, such that $\gamma_t(x) := \gamma(t,x)$ describes the position at time $t$ of a particle starting at $x$, i.e., $\gamma_0(x) = x$, $\gamma_t(\cdot)$ is diffeomorphic for each $t\in[0,1]$, and $(\gamma_0,\gamma_1)_{\#}\, \rho_0 = \pi$ is non-empty.  We also define the set of couplings
$$
\tilde{\Pi} = \left\{\pi \in \Pi(\rho_0,\rho_1): \pi = (id,T_{\pi})_{\#}(\rho_0) \text{ for some map } T_{\pi}\in \mathcal{T}_{\rho_0,\rho_1}\right\}.
$$ 

The cost associated with a coupling $\pi\in \tilde{\Pi}$ is then defined as 
\begin{align*}
    c(\pi) = \inf_{\gamma\in\Gamma(\pi)}\mathcal{A}(\gamma).
\end{align*}

Our primary objective is the following action-minimizing problem:
\begin{align}
    \inf_{\pi\in \tilde{\Pi}(\rho_0, \rho_1)}c(\pi) = \inf_{\pi\in \tilde{\Pi}(\rho_0, \rho_1)}\inf_{\gamma\in\Gamma(\pi)}\mathcal{A}(\gamma).
    \label{eq:inf_coup}
\end{align}

To the best of our knowledge, this formulation is novel. While related ideas appear in \cite{pooladian_neural_2024} and \cite{villani_optimal_2009}, those works are limited to linear potentials or ``lifted'' Lagrangians. In contrast, our formulation includes internal and interaction energy terms, which significantly alter the structure of the problem. In particular, the cost of each particle trajectory depends not only on its velocity and position, but also on the global density path $\mathbf{\rho}_t$. 

The following theorem, whose proof can be found in Appendix \ref{app:equiv_dyn_stat}, establishes the equivalence between the dynamic and static formulations given in Equations~\eqref{eq:action_min_det} and~\eqref{eq:inf_coup}, respectively.

\begin{theorem}
    The dynamic formulation in Equation \eqref{eq:action_min_det} is equivalent to the static formulation in Equation \eqref{eq:inf_coup}.
\end{theorem}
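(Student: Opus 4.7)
The plan is to prove the theorem via two inequalities, \textbf{dynamic $\le$ static} and \textbf{static $\le$ dynamic}, using the flow map $\leftrightarrow$ velocity field correspondence as the bridge between the two formulations. The main identity underlying both directions is the kinetic-energy equality
\begin{align*}
\int_0^1\!\!\int_{\R^d} \tfrac12\|v(t,x)\|^2\rho(t,x)\,dx\,dt
\;=\; \E{x}{\rho_0}\!\left[\int_0^1 \tfrac12\bigl\|\partial_t \gamma_t(x)\bigr\|^2 dt\right],
\end{align*}
which follows from a change of variables once we fix the relationship $\partial_t \gamma_t(x) = v(t,\gamma_t(x))$ and $\rho(t)=(\gamma_t)_\#\rho_0$.

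\textbf{Step 1 (static $\le$ dynamic).} Given any admissible pair $(\rho,v)$ for \eqref{eq:action_min_det} with finite action, I would construct its associated Lagrangian flow $\gamma_t(x)$ solving the ODE $\partial_t\gamma_t(x)=v(t,\gamma_t(x))$, $\gamma_0=\mathrm{id}$. By uniqueness of solutions to the continuity equation, $(\gamma_t)_\#\rho_0=\rho(t)$, so in particular $(\gamma_1)_\#\rho_0=\rho_1$ and $\gamma_t(\cdot)$ is a diffeomorphism for every $t$. Setting $\pi=(\mathrm{id},\gamma_1)_\#\rho_0$ places $\pi\in\tilde\Pi$ and $\gamma\in\Gamma(\pi)$. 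The kinetic identity above and the equality of densities $\rho_{\gamma_t}=\rho(t)$ make the potential contributions $F(\rho(t))$ match term by term, so $\mathcal{A}(\gamma)$ equals the dynamic action, yielding the desired inequality after taking the infimum.

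\textbf{Step 2 (dynamic $\le$ static).} Conversely, fix $\pi\in\tilde\Pi$ and $\gamma\in\Gamma(\pi)$. Since each $\gamma_t$ is a $C^1$ diffeomorphism with $\det(D\gamma_t)>0$, I can define
\begin{align*}
v(t,y) \;:=\; (\partial_t\gamma_t)\!\left(\gamma_t^{-1}(y)\right), \qquad \rho(t,\cdot):=(\gamma_t)_\#\rho_0 = \rho_{\gamma_t}.
\end{align*}
A direct computation shows that $(\rho,v)$ satisfies the continuity equation, the boundary conditions follow from $\gamma_0=\mathrm{id}$ and $(\gamma_0,\gamma_1)_\#\rho_0=\pi\in\Pi(\rho_0,\rho_1)$, and the same change-of-variables identity makes the two actions agree. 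Minimizing first over $\gamma\in\Gamma(\pi)$ and then over $\pi\in\tilde\Pi$ gives dynamic $\le$ static.

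\textbf{Main obstacle.} The delicate step is the flow construction in Step 1: if the infimum in \eqref{eq:action_min_det} is only attained (or approximated) by vector fields $v\in L^2(\rho\,dxdt)$ that lack classical $C^1$ regularity, there is no pointwise flow in the elementary sense. I would address this by invoking either the DiPerna--Lions/Ambrosio theory of regular Lagrangian flows, or the probabilistic superposition principle of Ambrosio--Gigli--Savar\'e, both of which produce a measurable family $\gamma_t$ with $(\gamma_t)_\#\rho_0=\rho(t)$ and the kinetic identity intact. One should also verify that the nonlocal potential term $F(\rho)$—in particular the internal $U(\rho)$ and interaction $W$ contributions—is preserved under the reparametrization; this follows once $\rho_{\gamma_t}=\rho(t)$ is established, so no extra regularity is needed on $F$ beyond what is implicit in assuming a finite-action minimizing sequence exists. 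Finally, a density argument allowing approximation of arbitrary admissible $(\rho,v)$ by pairs with diffeomorphic flows (so that the resulting $\pi$ lies in $\tilde\Pi$) closes the equivalence at the level of infima.
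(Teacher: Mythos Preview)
Your proposal is correct and follows essentially the same route as the paper: both directions rest on the flow map $\leftrightarrow$ velocity field correspondence together with the change-of-variables identity equating the Eulerian kinetic energy with the Lagrangian one, while $F(\rho(t))=F(\rho_{\gamma_t})$ falls out of $(\gamma_t)_\#\rho_0=\rho(t)$. The only substantive difference is that the paper argues with ``local minimizers'' and assumes classical flow regularity without comment, whereas you work at the level of arbitrary admissible pairs and flag the DiPerna--Lions/superposition machinery needed to handle rough vector fields---this is extra care rather than a different argument.
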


With these definitions in place, we now formulate the action-minimizing problem from a parametric perspective. Let the parameter space $\Theta$, the parametric map $T,$ and the reference density $\lambda$ be fixed. Define the set of admissible boundary parameters as 
$$\Theta_{0}^1 = \left\{(\theta_0,\theta_1)\in \R^D\times \R^D: (T_{\theta_i})_{\#}\,\lambda = \rho_i, i = 0,1\right\}.
$$ 
Given a parameter curve with boundary condition $\theta_{0\to1} \in \Theta_{0\to1}:=\{\theta_{0\to1} \in C^1([0,1],\Theta): \theta(0) = \theta_0, \theta(1) = \theta_1\}$, we define its action (by slight abuse of notation) as 
\begin{align}
    \mathcal{A}(\theta_{0\to1}) := \mathbb{E}_{z\sim \lambda}\left[\int_0^1\Big\|\frac{d}{dt}T_{\theta(t)}(z)\Big\|^2\,dt\right]+ \int_0^1 F((T_{\theta(t)})_{\#}\, \lambda)dt,
    \label{eq:action_theta}
\end{align}
where $T_{\theta(t)}(z)$ denotes the transported trajectory at $t$ of a particle $z$ drawn from the reference distribution with density $\lambda$. The corresponding parametric action-minimization problem is given by:
\begin{align}
    &\inf_{(\theta_0,\theta_1)\in \Theta_{0}^1}\inf_{\theta_{0\to1} \in \Theta_{0\to1}}\mathcal{A}(\theta_{0\to1}).
        \label{eq:lag_parameter}
\end{align}
This formulation parallels the coupling-based problem in Equation~\eqref{eq:inf_coup} but operates directly in the parameter space of the transport maps.

While \cite{li_constrained_2018} explored the Lagrangian formulation of Equation~\eqref{eq:action_min_det} in parameter space from a theoretical perspective, our approach in Equation~\eqref{eq:lag_parameter} introduces an alternative formulation based on coupling costs. This perspective not only offers conceptual clarity but also lends itself naturally to numerical implementation.

Motivated by \cite{liu_generalized_2024}, we employ cubic Hermite spline approximation of the parameter curve $\theta_{0\to 1}$ in solving the optimization problem \eqref{eq:lag_parameter}. The following theorem provides an estimate for the error introduced in the objective action. The full set of assumptions and proof are provided in Appendix~\ref{app:error_wass}. 
\begin{theorem}
 Let $\theta_{0\to1}$ be a parameter curve with $l \geq 1$ continuous derivatives, and let $\tilde{\theta}_{0\to1}$ be its cubic Hermite spline approximation using $K$ uniformly spaced control (collocation) points. Then the error in the action satisfies $|\mathcal{A}[\theta_{0\to1}] - \mathcal{A}[\tilde{\theta}_{0\to1}]| = O(h^{\kappa-1})$, where $\kappa := \min(l, 4)$. 
\end{theorem}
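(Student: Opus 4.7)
The plan is to split the action difference into its kinetic and potential parts and bound each separately using standard cubic Hermite spline approximation estimates in parameter space, which I then transfer to the action via regularity assumptions on $T_\theta$ and $F$.

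First I would recall the relevant spline approximation bounds. Under the assumption that $\theta_{0\to1} \in C^l([0,1],\Theta)$, the piecewise cubic Hermite interpolant $\tilde{\theta}_{0\to1}$ at $K$ uniformly spaced nodes (with spacing $h = 1/(K+1)$) satisfies, componentwise and uniformly in $t$,
\begin{align*}
\|\theta(t) - \tilde\theta(t)\|_\infty &= O(h^{\kappa}),\\
\|\theta'(t) - \tilde\theta'(t)\|_\infty &= O(h^{\kappa-1}),
\end{align*}
where $\kappa := \min(l,4)$. These are the classical Hermite interpolation error bounds once appropriate boundary derivative data are used, and they (together with their uniformity) form the scaffolding of the argument.

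Next I would bound the kinetic contribution
\begin{align*}
\Delta_K := \mathbb{E}_{z\sim \lambda}\!\int_0^1 \Big(\|\tfrac{d}{dt}T_{\theta(t)}(z)\|^2 - \|\tfrac{d}{dt}T_{\tilde\theta(t)}(z)\|^2\Big)dt.
\end{align*}
By the chain rule, $\tfrac{d}{dt}T_{\theta(t)}(z) = (\partial_\theta T)_{\theta(t)}(z)\,\theta'(t)$, so the integrand is a smooth function of $(\theta,\theta')$. Assuming $T$ is $C^2$ in $\theta$ with bounds on $\partial_\theta T$ and $\partial^2_{\theta\theta} T$ uniform on a neighborhood of the range of $\theta(t)$ and uniform in $z$ on the support of $\lambda$ (this is the regularity hypothesis to be stated in the appendix), we get a Lipschitz estimate of the form
\begin{align*}
\big|\|\tfrac{d}{dt}T_{\theta(t)}(z)\|^2 - \|\tfrac{d}{dt}T_{\tilde\theta(t)}(z)\|^2\big| \leq C\big(\|\theta(t) - \tilde\theta(t)\| + \|\theta'(t) - \tilde\theta'(t)\|\big),
\end{align*}
which combined with the interpolation bounds gives $\Delta_K = O(h^{\kappa-1})$. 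The term $\|\theta'-\tilde\theta'\|$ is the one that dominates and produces the $\kappa-1$ exponent.

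The potential part
$$\Delta_F := \int_0^1 \big(F(\rho_{\theta(t)}) - F(\rho_{\tilde\theta(t)})\big)dt$$
is handled by showing $\theta \mapsto F((T_\theta)_\# \lambda)$ is locally Lipschitz. For the external-potential term this follows by a direct change of variables and Lipschitzness of $V$; for the interaction term analogously using pairwise expectations and Lipschitzness of $W$; for the internal/Fisher-information term one uses smoothness of $T_\theta$ in $\theta$ and a change of variables that expresses $\rho_\theta$ and $\nabla\log\rho_\theta$ through derivatives of $T_\theta$ and $\det DT_\theta$, so that Lipschitz dependence on $\theta$ follows from the assumed $C^2$ regularity and from uniform lower bounds on $\det DT_\theta$ away from zero. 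Thus $\Delta_F = O(h^\kappa)$.

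The hardest part I expect is the Fisher-information / internal-energy case, because it involves $\nabla\log\rho$, which is not a pointwise Lipschitz function of $\theta$ without control on the Jacobian of $T_\theta$. I would therefore put the effort into stating clean uniform assumptions in the appendix: (i) $T \in C^2$ in $\theta$ with uniformly bounded derivatives on the tube around $\theta_{0\to1}([0,1])$, (ii) $\det DT_\theta \geq c > 0$ uniformly, and (iii) the reference $\lambda$ has suitable moment/integrability so the expectations are finite. Once these are in place, the triangle inequality $|\mathcal{A}[\theta]-\mathcal{A}[\tilde\theta]| \leq |\Delta_K| + |\Delta_F|$ yields the claimed $O(h^{\kappa-1})$ bound, with the kinetic term being the bottleneck.
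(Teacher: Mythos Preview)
Your proposal is correct and follows essentially the same route as the paper: split into kinetic and potential contributions, invoke the standard Hermite interpolation bounds $\|\theta-\tilde\theta\|=O(h^\kappa)$ and $\|\theta'-\tilde\theta'\|=O(h^{\kappa-1})$, use the chain rule $\tfrac{d}{dt}T_{\theta(t)}(z)=(\partial_\theta T)_{\theta(t)}(z)\,\theta'(t)$ together with boundedness and Lipschitzness of $\partial_\theta T$ to get the $O(h^{\kappa-1})$ kinetic bound, and use a Lipschitz property of the potential to get the (subdominant) $O(h^\kappa)$ term.

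The one substantive difference is in how the potential term is handled. The paper does not verify Lipschitzness of $\theta\mapsto F(\rho_\theta)$ term-by-term; instead it \emph{assumes} $F$ is Lipschitz in $W_2$ and invokes a preliminary lemma $W_2(\rho_\theta,\rho_{\tilde\theta})\le C\|\theta-\tilde\theta\|$ (which follows from the Lipschitzness of $T$ in $\theta$). This factoring through $W_2$ is cleaner and more modular, but the paper explicitly acknowledges that $W_2$-Lipschitzness of $F$ is a strong hypothesis and only checks it for linear and bilinear potentials with Lipschitz $V,W$; it does \emph{not} establish it for the entropy or Fisher-information terms. Your direct approach via change of variables and Jacobian control is more ambitious on that front, and you are right that it is the hardest part; just be aware that the paper sidesteps it by assumption rather than by the argument you sketch.
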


While action-minimizing curves can be computed analytically or via numerical integration for simple parametric families (e.g., affine maps) and linear potentials (see \cite{wu_parameterized_2025}), such formulations are inherently limited in expressivity. To overcome this, we adopt Neural ODEs \cite{chen_neural_nodate} as flexible and expressive parametric maps for modeling complex density dynamics.

\subsection{Pushforward map via Neural ODE}
Neural Ordinary Differential Equations (Neural ODEs) \cite{chen_neural_nodate} define continuous-time transformations via parameterized differential equations:
\begin{equation}
\frac{d\psi(\tau, z)}{d\tau} = v_{\theta}(\tau, \psi(\tau, z)), \quad \psi(0, z) = z \sim \lambda,
\end{equation}
where $v_{\theta}: [0,1]\times \R^d \to \R^d$ is a neural network parameterized by $\theta$, $\lambda$ is typically a standard normal distribution, and $\tau$ denotes the ODE time variable.
The resulting transport map is defined as:
\begin{align}
T_{\theta}(z) = z + \int_0^1 v_{\theta}(\tau, \psi(\tau, z))\, d\tau.
\end{align}

We adopt Neural ODEs as parametric maps due to their ability to support efficient computation of both the entropy $\log(\rho)$ and the score $\nabla_x\log(\rho)$ through auxiliary ODE systems \cite{zhou_score-based_2025}. Details of these complementary entropy and score equations are provided in Appendix~\ref{app:NODEs}.

\subsection{Optimization framework}
The static formulation \eqref{eq:lag_parameter} defines a bilevel optimization problem: the boundary conditions ensure feasibility by satisfying $(T_{\theta_i})_{\#}\,\lambda = \rho_i, i = 0,1$. The curve $\theta_{0\to1}$ minimizes the action and determines the coupling cost.
From the perspective of the spline control points $\{\theta_{t_i}\}_{i =0}^{K+1}$, the boundary parameters are $\theta_0,\theta_1$, while the interior path is defined by $\{\theta_{t_i}\}_{i =1}^{K}$. This motivates our optimization strategy: 

    \textbf{Step 1: Initialization}. First, initialize the boundary parameters $\theta_0$ and $\theta_1$ such that $(T_{\theta_i})_{\#}\,\lambda$ are good approximations of $\rho_i$ for $i = 0,1$. We employ Flow Matching (FM) for its efficiency.  We in general consider this step outside of the main algorithm. In Table \ref{tab:comp_} we include in the last column the FM training times. A parameter can be used for any functional, so once a boundary parameter has been obtained, it can be reused across multiple problems, see Appendix \ref{app:reus_parm}. Second, initialize the control points $\{\theta_{t_i}\}_{i =1}^{K}$ as the linear interpolation of $\theta_0$ and $\theta_1.$ Then run a few iterations of path optimization with $F(\rho) = 0.$
In Appendix \ref{app:fm_pre} and Appendix \ref{app:geo_init}, we show the computational advantages of each part of the initialization step. 

\textbf{Step 2: Path optimization}. Perform a gradient-based path optimization to update the interior control points $\{\theta_{t_i}\}_{i =1}^{K}$ to minimize the action \eqref{eq:action_theta}, while keeping the boundary parameters fixed, see details in Algorithm \ref{algo:path_opt}. The action functional \eqref{eq:action_theta} is evaluated using the trapezoidal rule with $N$ time steps and $M$ Monte Carlo samples. The gradient is computed using automatic differentiation. See Algorithm \ref{algo:path_opt}

\textbf{Step 3: Boundary/coupling optimization}. Optimize the boundary parameters $(\theta_0,\theta_1)$ while keeping the interior spline control points $\{\theta_{t_i}\}_{i =1}^{K}$ fixed, see details in Algorithm \ref{alg:coup_opt}. Specifically, we formulate this as a penalized optimization problem:
\begin{equation}
\min_{\theta_0,\theta_1\in\Theta} \mathbb{E}_{z\sim\lambda}\left[\sum_{i=0}^{1}\alpha_i L((T_{\theta_i})_\#\,\lambda, \rho_i)\right] + \mathcal{A}[\theta_{0\to1}].
\label{eq:theta_coup}
\end{equation}
Here $L$ is a dissimilarity metric between distributions, for which we use the FM loss function. The weights $\alpha_i$, $i = 0, 1$,  balance boundary accuracy and coupling optimality. The action $\mathcal{A}[\theta_{0\to1}]$ \eqref{eq:action_theta} is added as a penalty term, which is estimated at the current control points, see Algorithm \ref{alg:coup_opt}

We repeat the optimizations of step 2 and step 3 alternatively until convergence, producing both an optimal coupling and a minimal-action path between $\rho_0$ and $\rho_1$, see Algorithm \ref{alg:PDPO}. Observe as long as the action is lower bounded, step 2 will converge to a local minimum. The dissimilarity is assumed to be nonnegative, thus step 3 also converges to a local minimum. 

\section{Experiments}
\label{sec:numerics}
We conduct a comprehensive evaluation of our method across a variety of benchmark scenarios to demonstrate its accuracy, efficiency, flexibility, and scalability. In all experiments, we compare against GSBM \cite{liu_generalized_2024}, the current state-of-the-art, which has been shown to outperform earlier approaches. Where appropriate, we also include comparisons with Neural Lagrangian Optimal Transport (NLOT) \cite{pooladian_neural_2024} and APAC-Net \cite{lin_alternating_2021}. It is important to note that NLOT is limited to linear potentials, while APAC-Net is specifically designed for mean-field games (MFG) and employs soft terminal costs instead of enforcing hard distributional constraints. 

 In \ref{app:cont_point} we include an ablation study on the number of control points.

\subsection{Implementation details}

We implement our method in PyTorch \cite{politorchdyn}, and run all experiments on an AMD 7543 CPU + NVIDIA RTX A6000 GPU. The Wasserstein-2 distance is approximated using the POT library \cite{flamary_pot_2021} with 3,000 samples. For estimating the action, we use Monte Carlo integration with 3,000 samples and compute the time integral using the trapezoidal rule with a step size of $\Delta t = 1/50$. Each experiment is repeated across three random trials with different seeds. Additional implementation and experimental details are provided in Appendix~\ref{app:num_res}. In Table \ref{tab:experimental-setup-densities} see the boundary conditions $\rho_0$ and $\rho_1$, and Table \ref{tab:experimental-setup-densities} the hyper-parameters for the PDPO algorithm.

\subsection{Obstacle avoidance with mean-field interactions}
\label{sub:numerics-mean-field}
We demonstrate the effectiveness of our method through three challenging obstacle avoidance scenarios involving mean-field interactions. See Appendix \ref{app:mean_field} for their mathematical definitions. 
\begin{table}[th]
    \centering
    \caption{Comparison for obstacle avoidance with mean-field interactions. The quantities  $W_2(\rho_i)$  $i= 0,1$ denote the empirical are the Wasserstein-2 distances at times computed using the POT library \cite{flamary_pot_2021}. The FM time is the sum of the FM training times for both boundaries.}
    \label{tab:comp_}
\begin{tabular}{lcccc|c}
    \toprule
Problem (method) & $\mathcal{A}[\rho_t]$ & $\int_0^1\mathbb{E}_{\rho_t}V(X)dt$ &$(W_2(\rho_0),W_2(\rho_1))$ &  Time &  FM Time \\
\midrule
SCC (PDPO)  & $\mathbf{39.90\pm 0.16}$ &\textbf{ $1.12\pm 0.02$} &$(\mathbf{0.028},\mathbf{0.014}$) &  \textbf{11m} &  134s \\
SCC (GSBM) & $40.02\pm .75$& $4.05\pm1.48$ & $(0.21, 0.25)$ & 21m &  -  \\
SCC (Apac-net) & $39.95\pm 0.25$ & $1.31 \pm 0.16$ & $(0,0.093)$& 12m &  -\\
VNEFI (PDPO) & $\mathbf{148.91\pm0.72}$ &\textbf{ $38.290 \pm 0.43$}  & $(\mathbf{0.082},0.078)$ & \textbf{70m} &  77s\\
VNEFI (GSBM) & $156.80\pm4.31$ & $74.28\pm 1.23$ & $(0.086,\mathbf{0.062})$&  115m & - \\
GMM (PDPO) & $\mathbf{79.46\pm 0.62}$ & $0.41\pm0.35$ & $(0,0.359)$ &  \textbf{8m} &  3m10s \\
GMM (GSBM) & $88.77\pm 3.31$  & $14.59\pm 3.26$ &$(0.806,\mathbf{0.258})$ & 123m & - \\
GMM (NLOT) & $82.06\pm 2.80$ &\textbf{ $0.006\pm 0.006$} & $(0,0.629)$ & 117m  &  -\\
\bottomrule
\end{tabular}
\end{table}

\textbf{S-Curve with Congestion (SCC)} from \cite{lin_alternating_2021}: Particles navigate around two obstacles arranged in an "S"-shaped configuration while interacting with one another. PDPO achieves superior performance compared to existing methods (GSBM and APAC-Net), yielding lower action values, more accurate boundary approximation, and faster runtimes (see Table~\ref{tab:comp_}). 

\begin{figure}[!htb]
    \centering
    \subfloat[$(T_{\theta_{t_i}})_{\#}\,\lambda$]{
    \includegraphics[width = 0.24\textwidth]{Figures/s-tunnel/control_PDPO.png}
    \label{fig:s_control}
    }
    \subfloat[PDPO $(T_{\theta_{t}})_{\#}\,\lambda$]{
    \includegraphics[width = 0.24\linewidth]{Figures/s-tunnel/opt_path_PDPO.png}
    \label{fig:s_pdpo}
    }
    \subfloat[GSBM]{
    \includegraphics[width = 0.24\linewidth]{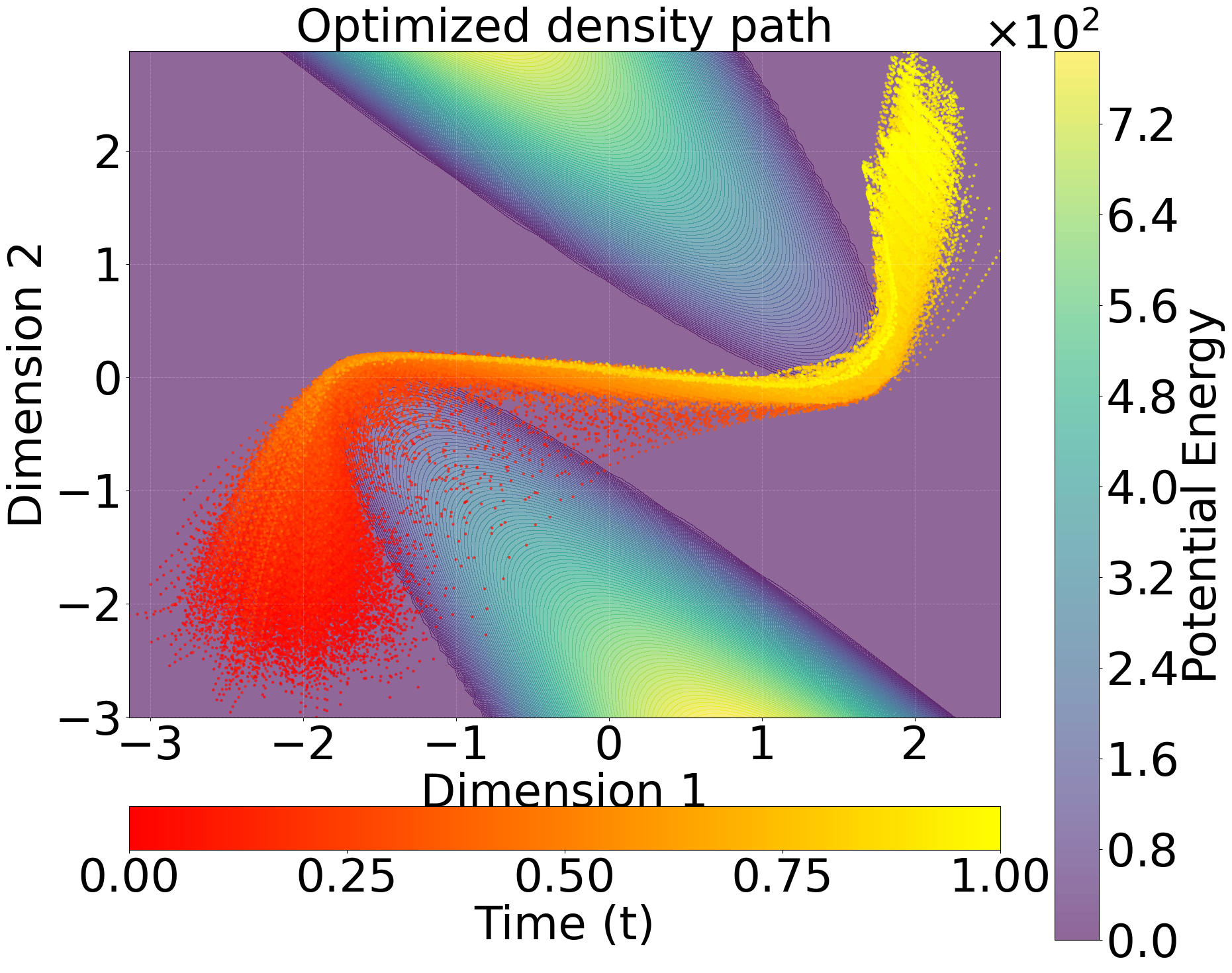}
    \label{fig:s_gsbm}
    }
    \subfloat[APAC-Net]{
    \includegraphics[width = 0.24\linewidth]{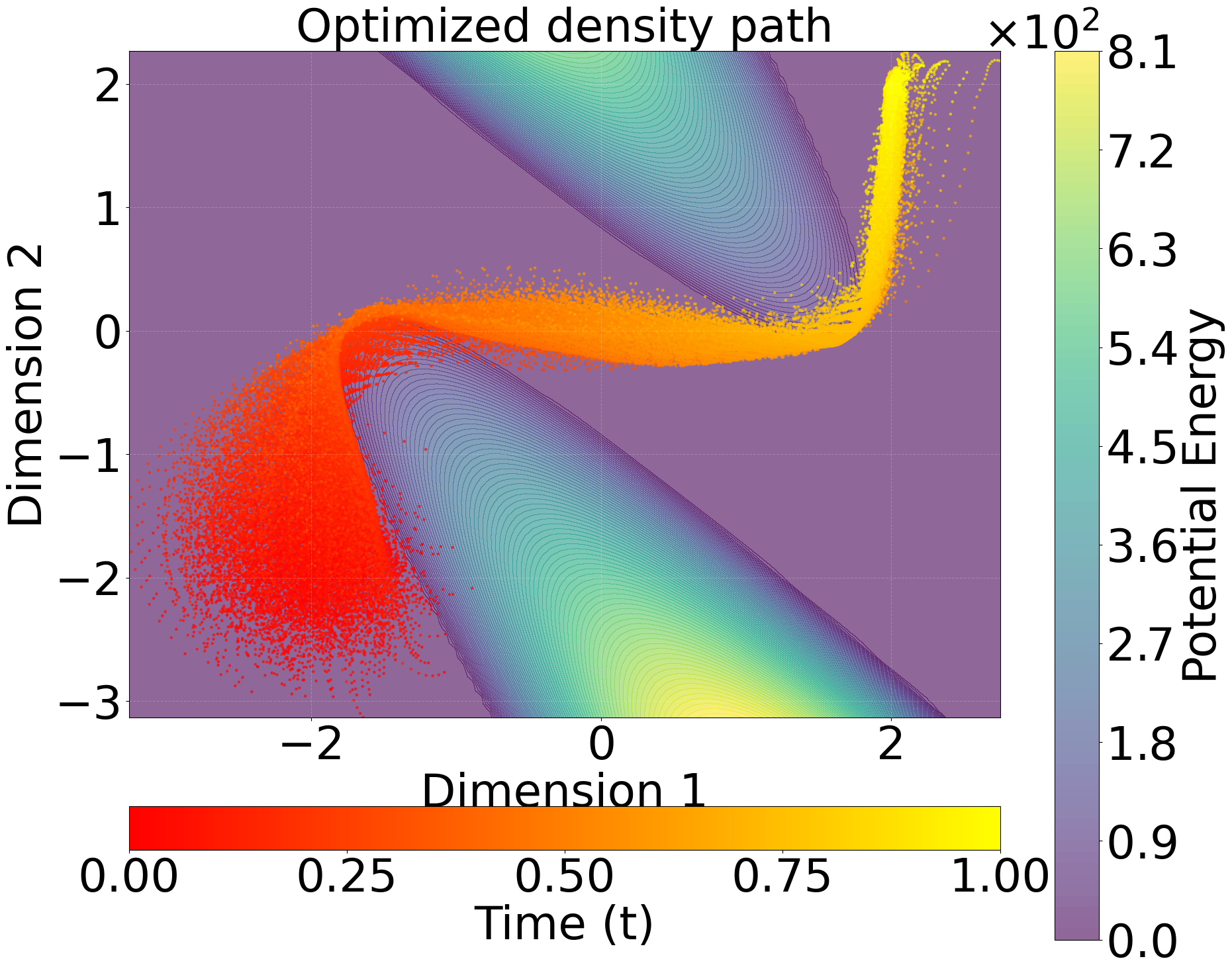}
    \label{fig:s_apac}
    }
    \caption{S-curve-C: (a) Pushforward densities at control points, (b) Pushforward densities along the interpolated trajectory, (c) Density path produced by GSBM, (d) Density path produced by APAC-Net}
    \label{fig:comp_stunnel}
\end{figure}
Notably, the interpolated density path generated by PDPO (Figure~\ref{fig:s_pdpo}) exhibits clear nonlinearity between control points. As shown in Figure~\ref{fig:s_control}, a linear interpolation of the density between control points $\theta_{t_1}$ (red) and $\theta_{t_2}$ (orange) would intersect the obstacles. This highlights how PDPO's spline-based parameterization effectively exploits the geometry of the parameter space to avoid infeasible paths. Note that both GSBM and APAC-Net violate the obstacle avoidance constraint in this scenario.

\textbf{V-Neck with Entropy and Fisher Information (VNEFI)} from \cite{liu_generalized_2024}: This stochastic optimal control (SOC) problem involves particles navigating a narrow channel while minimizing entropy, see Figure \ref{fig:v_pdpo}. PDPO outperforms GSBM by achieving lower action values, comparable boundary accuracy, and significantly faster runtime (see Table~\ref{tab:comp_}). When GSBM is restricted to the same runtime as PDPO (details in Appendix \ref{app:mean_field}), its action value remains similar, but its boundary approximation deteriorates by an order of magnitude.

\textbf{Gaussian Mixture obstacle (GMM)} from \cite{liu_generalized_2024}: In this benchmark, particles must move between multimodal source and target distributions while avoiding a Gaussian mixture-shaped obstacle, see Figure \ref{fig:gmm_comp}  for the density path in the appendix. This example demonstrates PDPO's flexibility in selecting different reference distributions. By setting the reference distribution $\lambda = \rho_0,$ there is no approximation error at time $t = 0.$ In Table  \ref{tab:comp_} we compare against GSBM and NLOT. Our approach achieves a slight improvement of $0.04\%$ in the action, but the computational time was reduced by approximately $85\%$ from nearly 2 hours to less than 10 minutes. While GSBM achieves a marginally better boundary representation for $\rho_1,$ our method offers comparable optimization quality with substantially faster computation.

Note that for GSBM, the reported $W_2(\rho_0)$ values are computed by evolving samples drawn from $\rho_1$ via the backward SDE to obtain the empirical approximation of $\rho_0$ 

\subsection{Generalized momentum minimization}
Building on the momentum Schrödinger Bridge (SB) framework by \cite{chen_deep_2023}, we extend action-minimizing problems to incorporate acceleration alongside additional potential terms. This leads to a novel formulation that minimizes the following objective: 
\begin{equation*}
\mathcal{A}(\gamma) = \mathbb{E}_{x\sim\rho_0}\left[\int_0^1 \left\|\frac{d^2}{dt^2}\gamma_t(x)\right\|^2 dt\right] + \int_0^1 F(\rho_{\gamma_t})dt.
\end{equation*}

\begin{figure}[!htb]
   \centering
    \subfloat[]{
    \includegraphics[width = 0.25\textwidth]{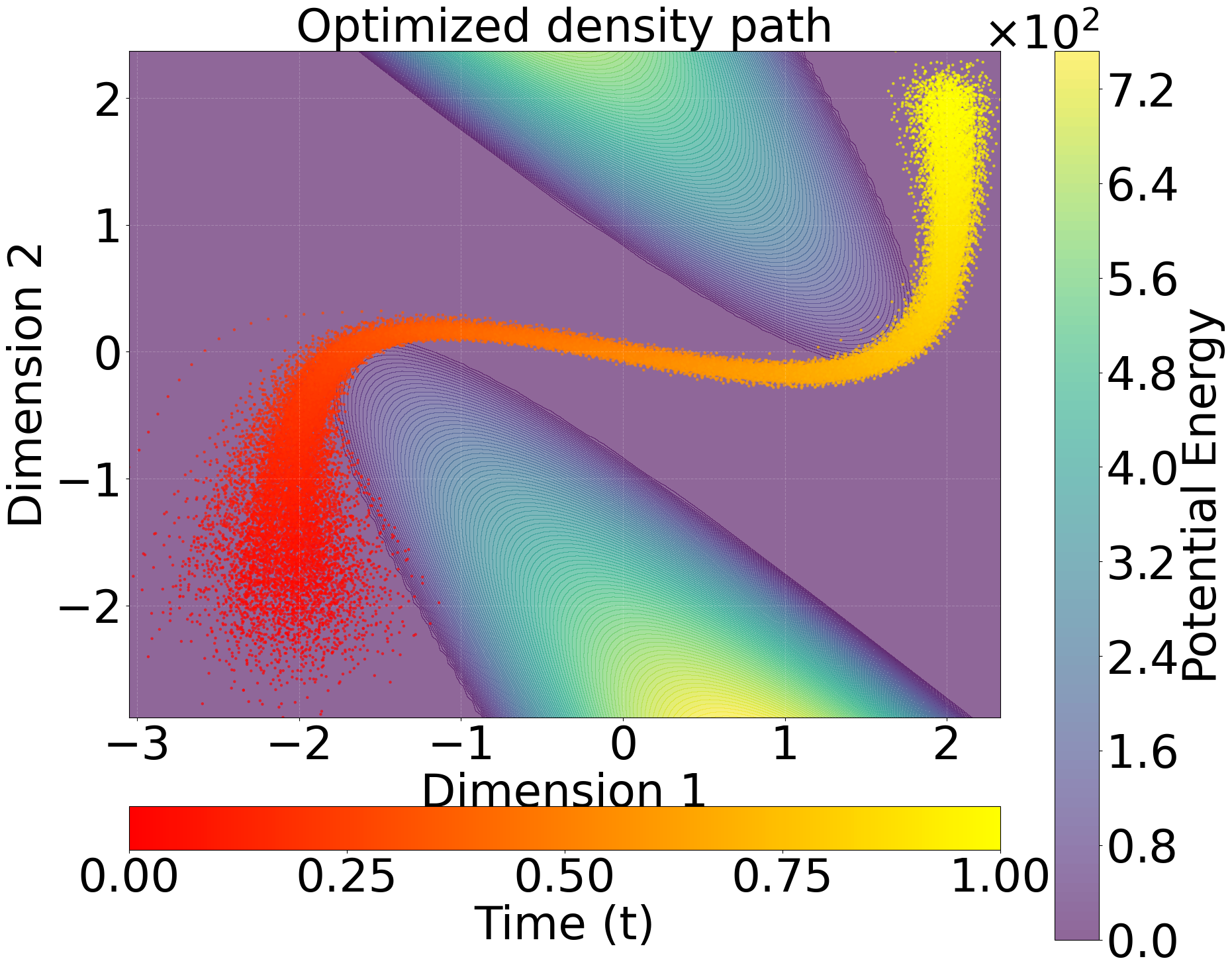}
    \label{fig:mom_scurve}
    }
    \subfloat[]{
    \includegraphics[width = 0.25\linewidth]{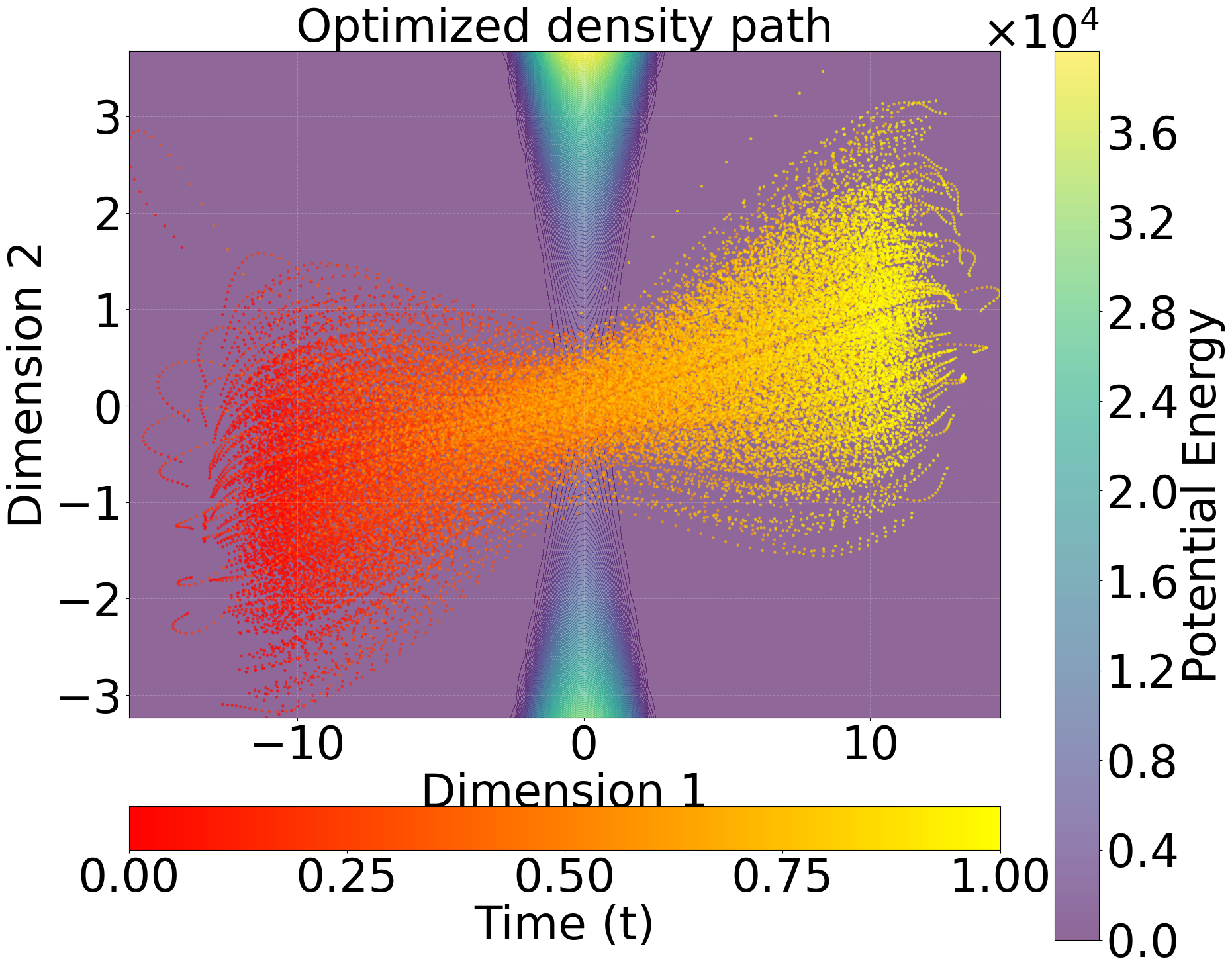}
    \label{fig:mom_vneck}
    }
    \subfloat[]{
    \includegraphics[width = 0.25\linewidth]{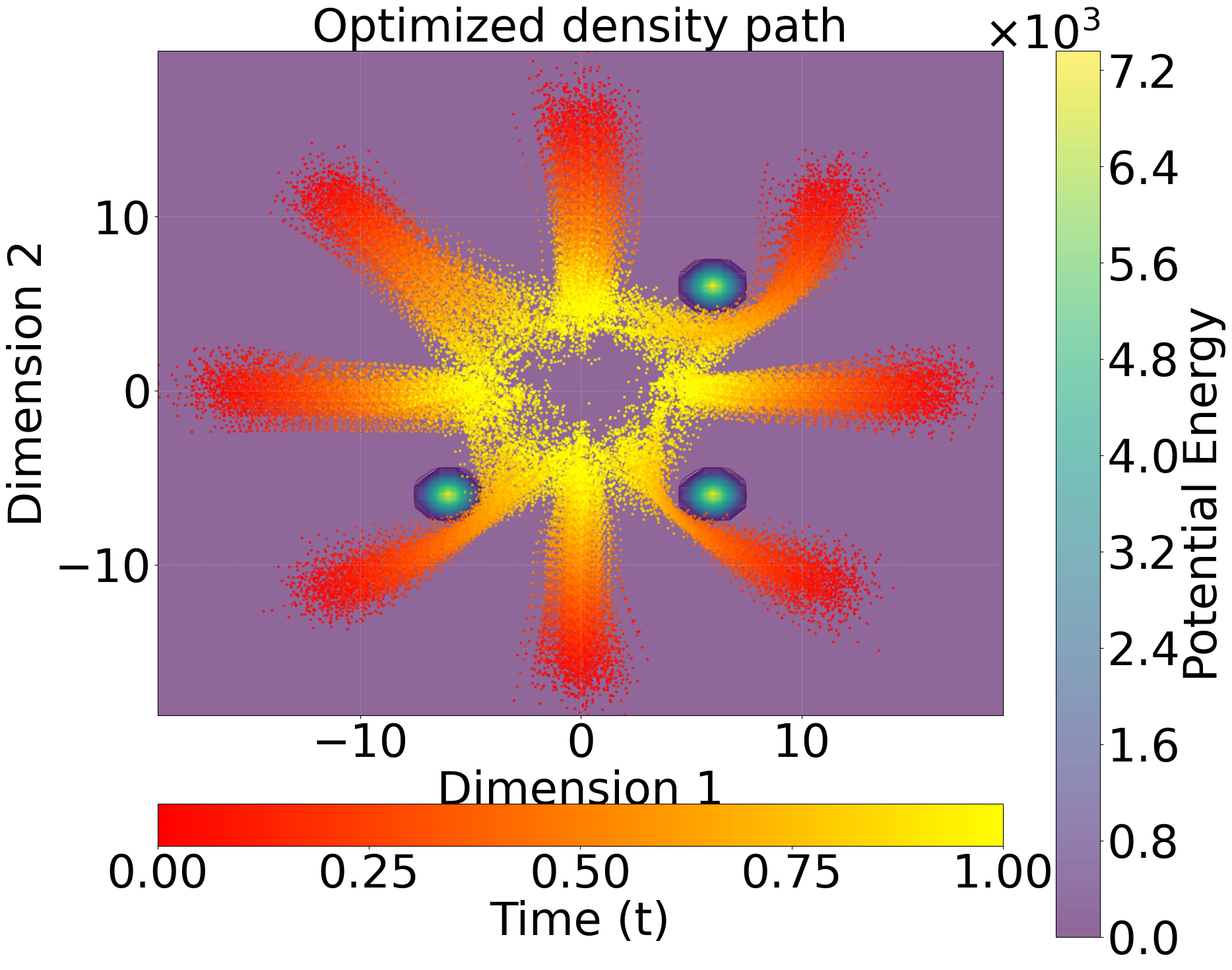}
    \label{fig:mom_gmm}
    }\\
    \subfloat[]{
    \includegraphics[width = 0.25\textwidth]{Figures/s-tunnel/opt_path_PDPO.png}
    \label{fig:ke_s}
    }
    \subfloat[]{
    \includegraphics[width = 0.25\linewidth]{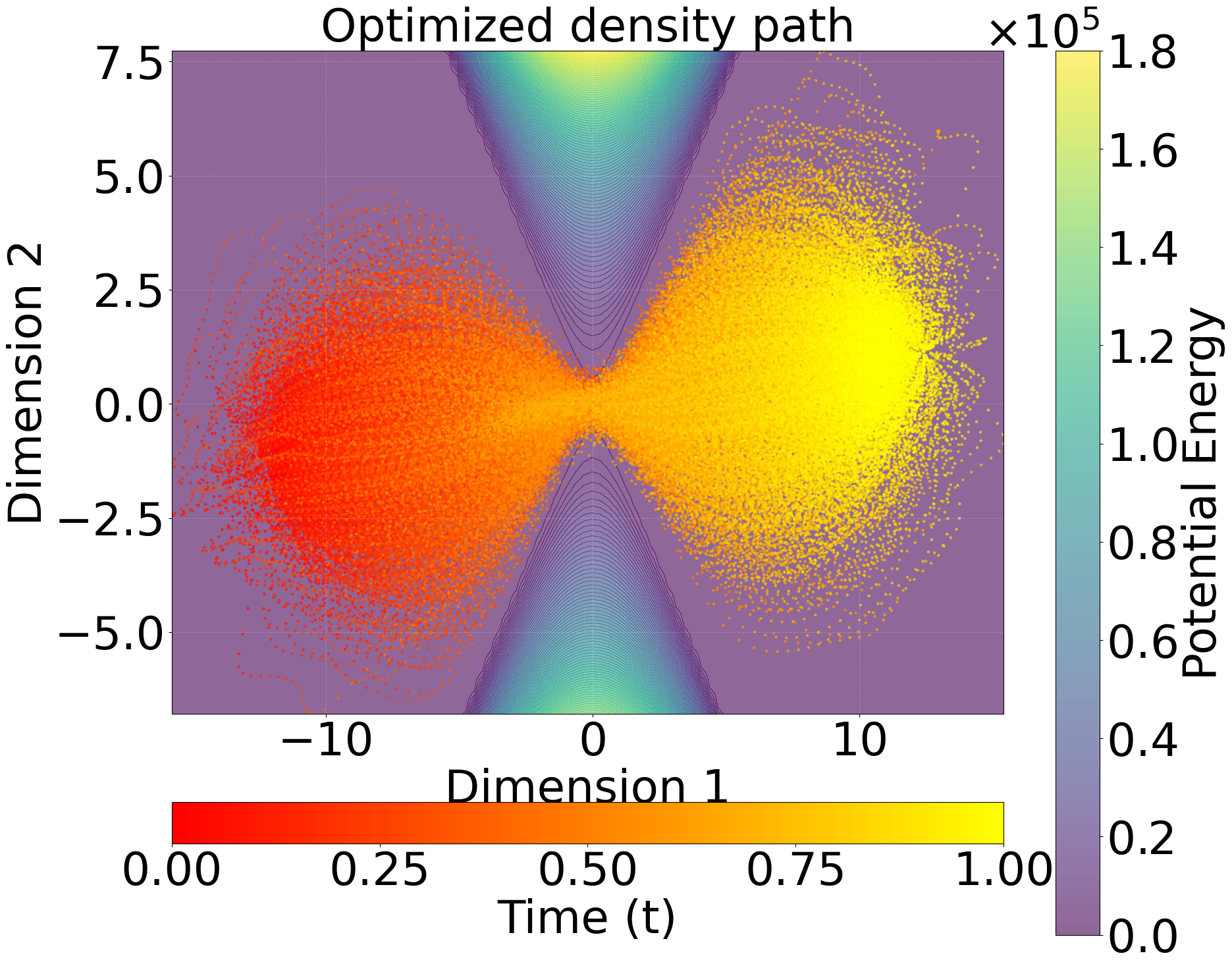}
    \label{fig:ke_v}
    }
    \subfloat[]{
    \includegraphics[width = 0.25\linewidth]{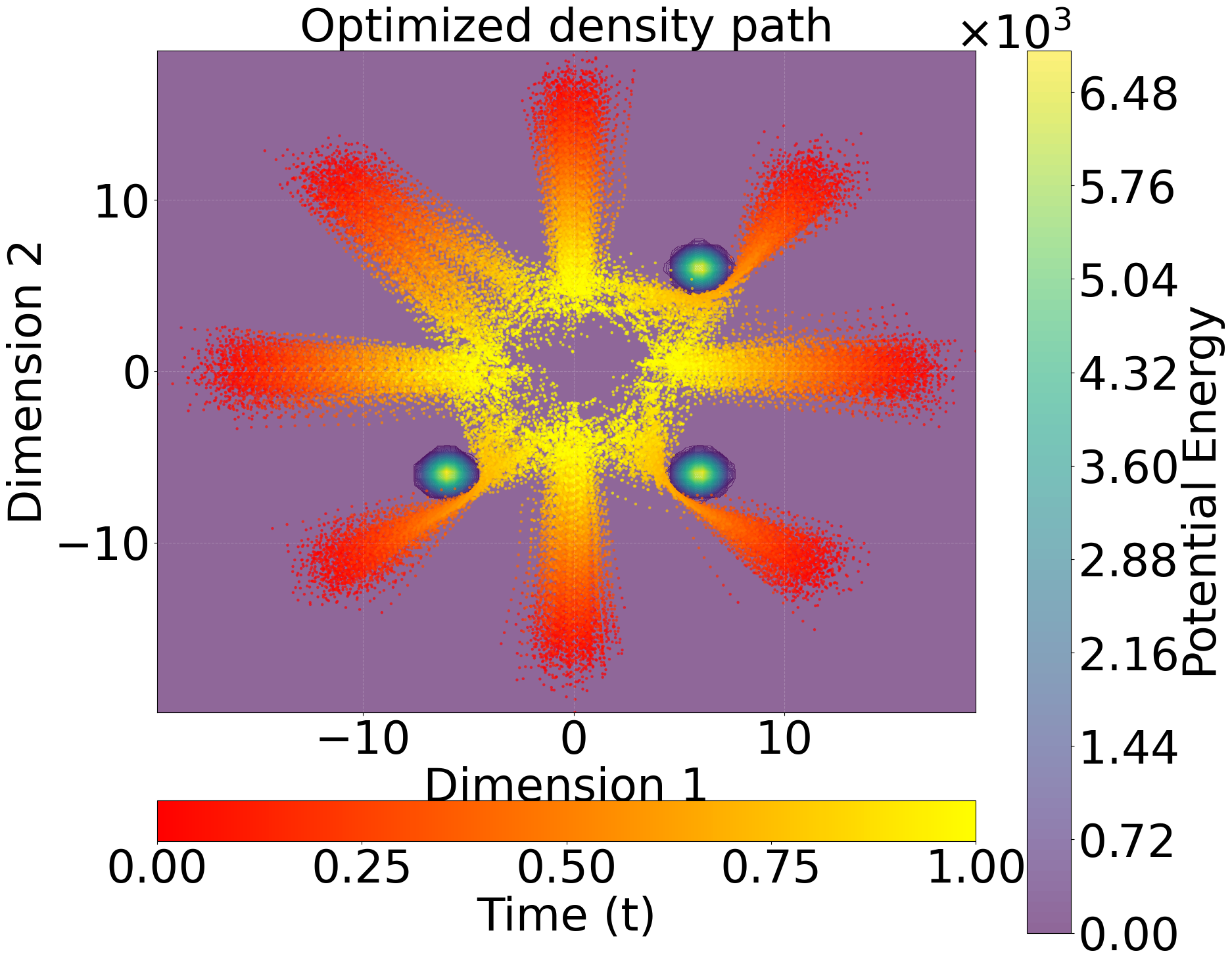}
    \label{fig:ke_gmm}
    }
    \caption{Solutions for generalized momentum-minimizing problems (a-c).  Solutions for kinetic energy minimizing problems (d-f).}
    \label{fig:mom_sols}
\end{figure}

Figure~\ref{fig:mom_sols} illustrates PDPO solutions for the obstacle avoidance with mean-field interaction class of problems. The resulting paths are noticeably smoother, compare the top and bottom rows of Figure \ref{fig:mom_sols}

To our knowledge, no prior method has addressed these generalized settings within a unified momentum formulation, underscoring PDPO’s flexibility beyond classical Wasserstein geodesics.

\subsection{High-dimensional opinion depolarization}
We examine the opinion depolarization problem following \cite{schweighofer_agent-based_2020} and \cite{liu_deep_2022}. Here, $\rho(t,\cdot):\R^d\to \R_{\geq0}$ models the opinion density of a population, where each coordinate represents an agent's opinion on a topic. The particle-level opinion dynamics evolve as
\begin{equation*}
\frac{d x(t)}{dt}= \frac{f_{\text{polarize}}(x(t); \rho(t,\cdot))}{\|f_{\text{polarize}}(x(t); \rho(t,\cdot))\|},
\end{equation*}
with 
\begin{align*}
    f_{\text{polarize}}(x; \rho(t,\cdot), \xi_t) := \mathbb{E}_{y \sim p_t} [a(x, y, \xi_t) \bar{y}], \quad a(x, y, \xi_t) := 
\begin{cases}
    1 & \text{if } \text{sign}(\langle x, \xi_t \rangle) = \text{sign}(\langle y, \xi_t \rangle) \\
    -1 & \text{otherwise}
\end{cases}.
\end{align*}
The function $a(x,y;\xi)$ evaluates opinion alignment at random information $\xi$ sampled independently of $\rho(t,\cdot).$ These dynamics cause an unpolarized initial condition $\rho(0,\cdot)$ (e.g., Gaussian) to polarize into opinion clusters. We seek a velocity $v(t,x)$ preventing polarization by enforcing an unpolarized terminal condition $\rho(1,\cdot)$ (see Appendix~\ref{app:opinion}). 

The action is
\[\mathcal{A}_{\text{polarize}}(\theta_{0\to 1}) := \E{z}{\lambda}\left[\frac{1}{2}\|f_{\text{polarize}}(T_{\theta(t)}(z); (T_{\theta(t)})_{\#}(\lambda))- \frac{d}{dt}T_{\theta(t)}(z)\|^2\right]+\int_0^1F((T_{\theta(t)})_{\#}(\lambda))dt.\]
Figure~\ref{fig:opinion} shows PDPO achieves quality comparable to \cite{liu_generalized_2024}, as evidenced by 2D PCA projections and directional similarity histograms at terminal time. These histograms show cosine similarity distributions between opinion vector pairs, measuring alignment independent of magnitude. PDPO's key advantage is computational efficiency: completing in 23 minutes versus over 5 hours for GSBM. We attribute this success to: (1) the spline path $\theta_{0\to 1}$ readily reaches the non-polarized target at $t=1$ from initialization, and (2) it connects boundary conditions independently of $f_{\text{polarize}}.$ Note that training boundary conditions $\theta_0$ and $\theta_1$ required 29m35s and 31m12s, respectively.

\begin{figure}
    \centering
    \includegraphics[width=0.6\linewidth]{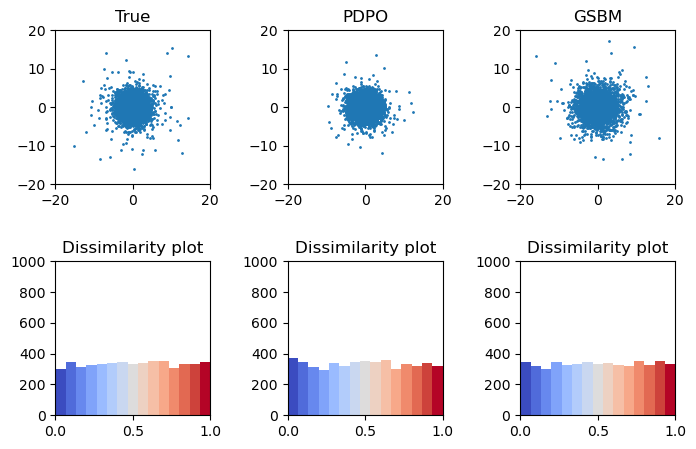}
\caption{Opinion depolarization in 1000 dimensions. Top: 2D PCA projections of the distributions. Bottom: directional similarity histograms. Left: target unimodal distribution. Middle: PDPO solution. Right: DeepGSB solution.}
\label{fig:opinion}
\end{figure}

\section{Computational complexity and limitations}
\label{sec:comp_complex}

The primary computational cost in our algorithm arises from pushforward evaluations. When the action integral is approximated using $N$ time points, our method requires evaluating $N$ Neural ODEs (NODEs). Each NODE involves integrating a velocity field using 10 time steps. For expectation estimates, we use $M$ Monte Carlo samples per NODE.

When the functional 
$F$ includes entropy and Fisher Information terms, the cost increases substantially. Computing the entropy requires solving an additional 1D ODE, while the Fisher Information involves a separate $d$-dimensional ODE, where $d$ is the problem dimension. In total, this results in $O(N\times M\times (1+d))$ ODE evaluations for problems involving both terms.

 In Appendix~\ref{app:verif_sb} , we showcase the scalability of our method with a 50-dimensional Schrödinger Bridge example, representing the highest-dimensional case with Fisher Information we have successfully computed.

Currently, our implementation is limited to NODEs with MLP architectures, see Appendix \ref{app:algos}, which limits applications to image-based transport problems, unlike GSBM \cite{liu_generalized_2024}.

\section{Conclusions}
\label{sec:conclusions}
We introduced Parametric Density Path Optimization (PDPO), a general framework for solving action-minimizing problems over probability densities by leveraging parameterized pushforward maps. This approach transforms the original infinite-dimensional optimization over density paths into a tractable, finite-dimensional problem in parameter space, using cubic splines with a small number of control points. 

A central conceptual distinction of PDPO is that its outcome is not a single trained neural network, but a collection of $K+1$ trained models $\{T_{\theta_{t_i}}\}_{i = 0}^K$ whose parameters lie on a spline-defined trajectory. Together, these models define a continuous transformation in density space through the pushforward of the interpolated parameters. From this perspective, PDPO learns a dynamical path in the parameter space, a time-dependent family of mappings that approximate the evolution of the probability mass. 

Experimental results show that PDPO matches or surpasses state-of-the-art methods in accuracy, while requiring substantially less computation time.

\section{Broader Impact}
The study of action-minimizing problems in density space might have applications in the study of population dynamics. 
\pagestyle{empty}

\begin{ack}
 Chen acknowledges partial support from NSF \#2325631, \#2245111. This research is partially supported by NSF grant DMS-230746. The authors would also like to acknowledge Benjamin Burns for the helpful discussions and the rendering of Figure \ref{fig:sketch} and Shu Liu for the helpful discussions.
\end{ack}


\newpage
\bibliography{references}

\newpage

\appendix

\section{Deterministic and SOC problems}
\label{app:det_soc}
We now show the equivalence of
\begin{align}
&\inf_{\rho,v} \int_0^1 \int_{\mathbb{R}^d}\frac{1}{2}\|v(t, x)\|^2\rho(t, x)dx + F(\rho(t)) + \mathcal{FI}(\rho(t))\, dt \label{eq:det_oc_fi}
\\
&\quad \text{subject to } \quad\partial_t\rho + \nabla \cdot (\rho v) = 0, \quad \rho(0, \cdot) = \rho_0, \quad \rho(1, \cdot) = \rho_1,\nonumber
\end{align} 
and
\begin{align}
&\inf_{\rho(t,x),u(t,x)} \int_0^1 \int_{\mathbb{R}^d}\frac{1}{2}\|u(t, x)\|^2\rho(t, x)dx + F(\rho(t)) \, dt \label{eq:soc_}\\
&\quad \text{subject to } \quad\partial_t\rho + \nabla \cdot (\rho u) = \frac{\sigma^2}{2}\Delta\rho, \quad \rho(0, \cdot) = \rho_0, \quad \rho(1, \cdot) = \rho_1. \nonumber
\end{align}
\begin{proof}
In what follows, we use the notation $\delta_{\rho}$ for the $L^2$ gradient.

    Let $S$ be a Lagrange multiplier for \eqref{eq:det_oc_fi}, then KKT conditions for \eqref{eq:det_oc_fi} are given by 
    \begin{align*}
        v &= \nabla S \quad \rho, \text{ a.e. }\\
        \partial_tS +\frac{1}{2}\|\nabla S\|^2 &= -\delta_\rho F(\rho) -\delta_{\rho}\mathcal{FI}(\rho),\\
        \partial_t\rho + \nabla \cdot (\rho v) &= 0.
    \end{align*}
    Define $\Phi = S+\sigma^2/2\log\rho$, and $u := \nabla \Phi$, $\rho$ a.e. Then the pair $(\rho,u)$ satisfies 
    \begin{align*}
        u &= \nabla \Phi, \quad \rho \text{ a.e. }\\
        \partial_t\Phi +\frac{1}{2}\|\nabla \Phi\|^2 &= -\delta_\rho F(\rho), \\
        \partial_t\rho + \nabla \cdot (\rho u) &= \frac{\sigma^2}{2}\Delta \rho,
    \end{align*}
    which is in turn the KKT conditions of the optimization problem \eqref{eq:soc_}.
\end{proof}

\section{Equivalence of dynamic and static frameworks}
\label{app:equiv_dyn_stat}
Here we show the equivalence between 
\begin{align*}
&\inf_{\rho,v} \int_0^1 \int_{\mathbb{R}^d}\frac{1}{2}\|v(t, x)\|^2\rho(t, x)dx + F(\rho(t)) \, dt
\\
&\quad\text{subject to } \quad\partial_t\rho + \nabla \cdot (\rho v) = 0, \quad \rho(0, \cdot) = \rho_0, \quad \rho(1, \cdot) = \rho_1, \nonumber
\end{align*} 
and
\begin{align*}
    \inf_{\pi\in \tilde{\Pi}(\rho_0, \rho_1)}c(\pi) = \inf_{\pi\in \tilde{\Pi}(\rho_0, \rho_1)}\inf_{\gamma\in\Gamma(\pi)}\mathcal{A}(\gamma).
\end{align*}
\begin{proof}
Observe that a local minimizer of \eqref{eq:action_min_det} $(\rho,v)$ defines a $C^1$-diffeomorphic curve $\gamma_{\rho,v}$ as the flow
\[\gamma(t,x) = x +\int_0^tv(s,\gamma(s,x))ds, \]
and the coupling $\pi_{\rho,v} = (x,\gamma(1,x)).$ The coupling $\pi_{\rho,v}$ and path $\gamma_{\rho,v}$ are an admissible pair for \eqref{eq:inf_coup}. From the definition of $\gamma_{\rho,v},$ it follows that $\rho_{\gamma_{\rho,\nu}(t)} = (\gamma_{\rho,\nu}(t))_{\#}(\rho_0) = \rho,$ and 
\begin{align*}
    \int_0^1\int_{\R^d}\frac{1}{2}\|v(t, x)\|^2\rho(t, x)dxdt = \mathcal{A}(\gamma_{\rho,v}),
\end{align*}
then 
\[\int_0^1 \int_{\mathbb{R}^d}\frac{1}{2}\|v(t, x)\|^2\rho(t, x)dx + F(\rho) \, dt = \mathcal{A}(\gamma_{\rho,v})+\int_0^1F(\rho_{\gamma,v})dt.\]
Thus  \eqref{eq:action_min_det} upper bounds \eqref{eq:inf_coup}. Now, given a local minimizer of \eqref{eq:inf_coup} $(\pi,\gamma)$, the density $\rho_{\pi,\gamma}(t,\cdot) = (\gamma_{t}(\cdot))_{\#}(\rho_0)$ and the velocity field 
\[v(t,\gamma(t,x)) = \frac{d}{dt}\gamma(t,x), \quad \gamma(0,x) = x\sim \rho_0,\]
define an admissible pair of \eqref{eq:action_min_det}. As before,
\[\mathcal{A}(\gamma) + \int_0^1F(\rho_{\gamma})dt = \int_0^1\int_{\R^d}\frac{1}{2}\|v_{\pi,\gamma}(t, x)\|^2\rho_{\pi,\gamma}(t, x)dx+ F(\rho_{\pi,\gamma})dt.\]
Thus,  \eqref{eq:inf_coup} upper bounds \eqref{eq:action_min_det}.
\end{proof}
\section{Wasserstein error bounds for interpolation}
\label{app:error_wass}
\begin{theorem}
\label{theo:w2_bound}
    Assume that $T$ is Lipschitz in its second variable with a constant $C,$ in the sense, $\forall z \in \mathbb{R}^d$ and any $\theta,\tilde{\theta}\in \mathbb{R}^D,$
    \[
    \|T(z,\theta)-T(z,\tilde{\theta})\|\leq C\|\theta-\tilde{\theta}\|,
    \]
    then
    \begin{align}
        W_2(\rho_{\theta},\rho_{\tilde{\theta}})\leq C\|\theta-\tilde{\theta}\|.
    \end{align}
\end{theorem}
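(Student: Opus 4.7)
The plan is to exploit the variational definition of $W_2$ by constructing an explicit (suboptimal) coupling between $\rho_\theta$ and $\rho_{\tilde\theta}$ that shares the same source sample $z \sim \lambda$. Since $W_2^2$ is an infimum over couplings, any admissible coupling produces an upper bound, and the ``synchronous'' coupling induced by $\lambda$ is the natural choice because it directly exposes the Lipschitz hypothesis on $T$.

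Concretely, I would first define the coupling $\pi := (T_\theta, T_{\tilde\theta})_{\#}\lambda \in \mathcal{P}(\mathbb{R}^d \times \mathbb{R}^d)$ and verify its marginals: pushing $\pi$ forward through the first (respectively second) coordinate projection yields $(T_\theta)_{\#}\lambda = \rho_\theta$ (respectively $\rho_{\tilde\theta}$), so $\pi \in \Pi(\rho_\theta, \rho_{\tilde\theta})$. Next I would rewrite the transport cost under $\pi$ using the change of variables formula,
\begin{equation*}
\int_{\mathbb{R}^d \times \mathbb{R}^d} \|x-y\|^2 \, d\pi(x,y) = \int_{\mathbb{R}^d} \|T(z,\theta) - T(z,\tilde\theta)\|^2 \, d\lambda(z),
\end{equation*}
and then apply the hypothesis $\|T(z,\theta) - T(z,\tilde\theta)\| \leq C\|\theta - \tilde\theta\|$ pointwise in $z$ to obtain the bound $C^2 \|\theta - \tilde\theta\|^2$ after integrating against $\lambda$ (which has unit mass). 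Taking the infimum on the left over all couplings gives $W_2^2(\rho_\theta, \rho_{\tilde\theta}) \leq C^2 \|\theta - \tilde\theta\|^2$, and taking square roots yields the claim.

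There is no real obstacle here: the argument is essentially a one-line use of the ``shared randomness'' coupling, and neither absolute continuity of $\lambda$ nor invertibility of $T_\theta$ is required. The only step that deserves a sentence of care is the marginal check, which follows immediately from the definition of pushforward and the identity $\mathrm{proj}_1 \circ (T_\theta, T_{\tilde\theta}) = T_\theta$.
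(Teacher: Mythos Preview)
Your proposal is correct and matches the paper's own proof: both use the synchronous coupling $(T_\theta,T_{\tilde\theta})_{\#}\lambda$ to upper-bound $W_2$ and then apply the Lipschitz hypothesis pointwise under the integral. The paper compresses this into a single displayed inequality, while you spell out the marginal check explicitly, but the argument is the same.
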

\begin{proof}
    From the static definition of the Wasserstein distance Equation \eqref{eq:wass_met}, we have
    \begin{align*}
        W_2(\rho_{\theta},\rho_{\tilde{\theta}}) \leq \left(\int_{\mathbb{R}^d}\|T(z,\theta)-T(z,\tilde{\theta})\|^2\lambda(z)dz\right)^{1/2} \leq C||\theta-\tilde{\theta}\|.
    \end{align*}
\end{proof}
We recognize that $T$ being Lipschitz in the above sense is strong. We leave for future research, relaxing it to locally Lipschitz, a more feasible condition. 

\begin{corollary}
\label{cor:w2_theta}
    Let $\theta_{0\to 1}$ and $\tilde{\theta}_{0\to 1}$ denote two curves in the parameter space. Following the hypothesis on Theorem \ref{theo:w2_bound}, it follows that $\forall t \in [0,1]$
    \[
    W_2(\rho_{\theta(t)},\rho_{\tilde{\theta}(t)}) \leq C||\theta(t)-\tilde{\theta}(t)\|.
    \]
\end{corollary}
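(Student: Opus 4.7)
The plan is to apply Theorem~\ref{theo:w2_bound} pointwise in $t$. The theorem bounds $W_2(\rho_\theta, \rho_{\tilde\theta})$ by $C\|\theta - \tilde\theta\|$ for any pair of parameters $\theta, \tilde\theta \in \mathbb{R}^D$, under the hypothesis that $T(z, \cdot)$ is Lipschitz in its second argument with a constant $C$ that is independent of $z$ and of the chosen parameters. The corollary merely re-indexes this bound along two curves in $\Theta$, so the strategy is to fix an arbitrary time and invoke the theorem.

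First, I would fix an arbitrary $t \in [0,1]$ and observe that $\theta(t) \in \Theta \subseteq \mathbb{R}^D$ and $\tilde{\theta}(t) \in \Theta \subseteq \mathbb{R}^D$ are just two ordinary points in the parameter space; the fact that they arise as evaluations of the curves $\theta_{0\to 1}$ and $\tilde{\theta}_{0\to 1}$ plays no role in the bound at that single time. Next, I would substitute $\theta := \theta(t)$ and $\tilde\theta := \tilde\theta(t)$ directly into the conclusion of Theorem~\ref{theo:w2_bound} to obtain
\[
W_2\bigl(\rho_{\theta(t)}, \rho_{\tilde\theta(t)}\bigr) \;\leq\; C\,\|\theta(t) - \tilde\theta(t)\|.
\]
Since $t \in [0,1]$ was arbitrary, this inequality holds for every $t$, which is precisely the claim. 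The uniformity of the Lipschitz constant $C$ across all $(\theta,\tilde\theta)$ is what allows the bound to transfer without modification from a single pair of parameters to two arbitrary curves.

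There is essentially no obstacle here: the corollary is a pointwise-in-$t$ reformulation of the theorem, and the proof reduces to freezing $t$ and quoting the previous result. The only conceptual subtlety is that one does not require any regularity of the curves $\theta_{0\to 1}$ and $\tilde\theta_{0\to 1}$ for the inequality to hold pointwise; continuity or differentiability would enter only if one wished to upgrade the bound to an integrated statement, e.g.\ $\int_0^1 W_2(\rho_{\theta(t)}, \rho_{\tilde\theta(t)})^2\,dt \leq C^2 \int_0^1 \|\theta(t)-\tilde\theta(t)\|^2\,dt$, which follows immediately by squaring and integrating.
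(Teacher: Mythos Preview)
Your proposal is correct and matches the paper's (implicit) approach: the paper states the corollary without proof, treating it as an immediate pointwise-in-$t$ application of Theorem~\ref{theo:w2_bound}, which is exactly what you do.
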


\begin{corollary}
Let $\theta_{0\to 1}$ be a curve in parameter space with continuous derivatives up to order $l\geq 1,$ and let $\tilde{\theta}_{0\to 1}$ be the piecewise cubic Hermite spline interpolation of $\theta(t)$ at $K$ uniformly spaced points $\{t_i = \frac{i}{K+1}\}_{i = 0}^{K+1}$. Define $\kappa := \min(l,4),$ then  
\[
\|\theta_{0\to 1} - \tilde{\theta}_{0\to 1}\| \leq M_\kappa \max_{\xi \in [0,1]} \|\theta^{(\kappa)}(\xi)\|\, h^\kappa ,
\]
where $h = \frac{1}{K+1}$ and $M_{\kappa}$ is a constant depending on $\kappa.$ With $\theta^{(\kappa)},$ the $\kappa$ derivative of the path $\theta(t).$  Furthermore for all $t\in [0,1]$, the Wasserstein distance between the density paths at time $t$ is given by 
\[
W_2(\rho_{\theta(t)}, \rho_{\tilde{\theta}(t)}) \leq C  M_\kappa \max_{\xi \in [0,1]} \|\theta^{(\kappa)}(\xi)\| \, h^\kappa,
\]
where $C$ is the Lipschitz constant from Theorem \ref{theo:w2_bound}.
\end{corollary}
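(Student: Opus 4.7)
The plan is to split the corollary into two independent parts. First, I would establish the pointwise interpolation bound $\|\theta(t) - \tilde{\theta}(t)\| \leq M_\kappa \max_\xi \|\theta^{(\kappa)}(\xi)\| h^\kappa$, which is a purely numerical-analysis statement about cubic Hermite spline interpolation and makes no reference to the pushforward. Second, I would combine this with Corollary \ref{cor:w2_theta} applied pointwise in $t$ to immediately obtain the Wasserstein bound.

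For the interpolation bound, I would work componentwise on $\theta:[0,1]\to\R^D$ and localize to a single subinterval $[t_i,t_{i+1}]$ of length $h$. The essential property driving everything is that the piecewise cubic Hermite interpolant reproduces polynomials of degree at most $3$ exactly and matches both function and derivative values at the nodes. For the full-regularity case $l \geq 4$ (so $\kappa = 4$), I would invoke the classical Hermite remainder formula
\[
\theta_j(t) - \tilde{\theta}_j(t) = \frac{\theta_j^{(4)}(\xi_t)}{4!}(t-t_i)^2(t-t_{i+1})^2
\]
for some $\xi_t \in (t_i,t_{i+1})$, maximize $(t-t_i)^2(t-t_{i+1})^2$ on the subinterval (peak value $h^4/16$), and sum over components to obtain a universal constant $M_4$.

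For the low-regularity case $l \in \{1,2,3\}$ (so $\kappa = l$), the degree-four remainder formula no longer applies and this is where I expect the main technical obstacle. I would decompose $\theta$ on $[t_i,t_{i+1}]$ as $\theta = P + R$, where $P$ is the Taylor polynomial of $\theta$ of degree $l-1$ about $t_i$ and $R$ is the associated remainder, satisfying $\|R\|_\infty = O(h^l \max_\xi \|\theta^{(l)}(\xi)\|)$ and $\|R'\|_\infty = O(h^{l-1} \max_\xi \|\theta^{(l)}(\xi)\|)$. Since $l-1 \leq 3$, Hermite interpolation reproduces $P$ exactly, so $\theta-\tilde{\theta} = R - \tilde{R}$. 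The delicate step is controlling $\tilde{R}$: writing the Hermite interpolant in the standard basis of four cubic shape functions rescaled to $[t_i,t_{i+1}]$, the derivative-matching basis functions pick up an extra factor of $h$, which exactly cancels the $h^{-1}$ present in $\|R'\|_\infty$, so that $\|\tilde{R}\|_\infty = O(h^l \max_\xi \|\theta^{(l)}(\xi)\|)$ as well. An equivalent route is the Peano kernel representation of the error functional at fixed $t$, whose kernel scales like $h^\kappa$ in the $L^1$ norm.

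Finally, for the Wasserstein inequality, I would apply Corollary \ref{cor:w2_theta} pointwise: for every fixed $t \in [0,1]$, one has $W_2(\rho_{\theta(t)}, \rho_{\tilde{\theta}(t)}) \leq C \|\theta(t) - \tilde{\theta}(t)\|$ with $C$ the Lipschitz constant from Theorem \ref{theo:w2_bound}. Substituting the $O(h^\kappa)$ interpolation bound from the first part yields the claimed estimate, uniformly in $t \in [0,1]$.
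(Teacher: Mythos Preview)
Your proposal is correct and matches the paper's treatment: the paper states this corollary without proof, tacitly taking the first inequality as a classical error bound for piecewise cubic Hermite interpolation and obtaining the second inequality by a direct application of Corollary~\ref{cor:w2_theta}. Your plan simply fills in the numerical-analysis details the paper omits, and your use of Corollary~\ref{cor:w2_theta} for the Wasserstein part is exactly what the paper intends.
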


\begin{theorem}
\label{theo:act_bound}
    Let $\theta_{0\to 1}$ be a curve in parameter space with continuous derivatives up to order $l \geq 2$, and let $\tilde{\theta}_{0\to 1}$ be the cubic Hermite spline interpolation of $\theta(t)$ at $K$ uniformly spaced points $\{t_i = \frac{i}{K+1}\}_{i=0}^{K+1}$. Define $\kappa := \min(l, 4)$.

Assume that: 
\begin{enumerate}
    \item $F$ is Lipschitz with respect to the Wasserstein-2 distance with constant $M_F.$
    \item $\|\partial_{\theta}T(z,\theta)\| < M_{\partial_{\theta}T},$ for some constant $M_{\partial_{\theta}T}$ for all $z$ and $\theta.$
    \item $\partial_{\theta}T(z,\theta)$ is Lipschitz continuous in $\theta$ for all $z$ with Lipschitz constant $C_{\partial_{\theta}}.$
    \item $\|\theta'(t)\| < M_{\theta'},$ for some constant $M_{\partial_{\theta'}}$ for all $t \in [0,1].$
\end{enumerate}

Then the error in the action functional is bounded by:
\[
|A[\theta_{0\to 1}] - A[\tilde{\theta}_{0\to 1}]| \leq C_{\mathcal{A}} \max_{\xi \in [0,1]} \|\theta^{(\kappa)}(\xi)\| \, h^{\kappa-1},
\] 
where $h = \frac{1}{K+1}$ and $C_{\mathcal{A}}$ is defined below at the end of the proof.
\end{theorem}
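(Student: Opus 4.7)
The plan is to decompose $\mathcal{A}[\theta_{0\to 1}] - \mathcal{A}[\tilde\theta_{0\to 1}]$ into a kinetic contribution and a potential contribution, and bound each separately using the regularity hypotheses together with classical cubic Hermite spline error estimates. Concretely, write
\[
\mathcal{A}[\theta] - \mathcal{A}[\tilde\theta] = \underbrace{\mathbb{E}_{z\sim\lambda}\!\int_0^1\!\left(\|\tfrac{d}{dt}T_{\theta(t)}(z)\|^2 - \|\tfrac{d}{dt}T_{\tilde\theta(t)}(z)\|^2\right)dt}_{=:I_{\mathrm{kin}}} + \underbrace{\int_0^1\!\left(F(\rho_{\theta(t)}) - F(\rho_{\tilde\theta(t)})\right)dt}_{=:I_{\mathrm{pot}}}.
\]
The potential term is the easy one: by hypothesis (1), $|F(\rho_{\theta(t)}) - F(\rho_{\tilde\theta(t)})| \leq M_F\, W_2(\rho_{\theta(t)}, \rho_{\tilde\theta(t)})$, and Corollary~\ref{cor:w2_theta} combined with the standard cubic Hermite bound on the spline itself gives $W_2(\rho_{\theta(t)},\rho_{\tilde\theta(t)}) \leq C M_\kappa \max_\xi \|\theta^{(\kappa)}(\xi)\|\, h^\kappa$. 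Integrating over $[0,1]$ yields $|I_{\mathrm{pot}}| = O(h^\kappa)$, which is strictly better than the claimed order.

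For the kinetic term I would apply the chain rule $\frac{d}{dt}T_{\theta(t)}(z) = \partial_\theta T(z,\theta(t))\,\theta'(t)$ and use the elementary identity $|\|a\|^2-\|b\|^2| \leq \|a-b\|(\|a\|+\|b\|)$. Hypotheses (2) and (4) bound both $\|a\|$ and $\|b\|$ by $M_{\partial_\theta T} M_{\theta'}$, so $|\|a\|^2-\|b\|^2| \leq 2 M_{\partial_\theta T} M_{\theta'}\|a-b\|$. Then split $\|a-b\|$ by the triangle inequality:
\[
\bigl\|\partial_\theta T(z,\theta(t))\theta'(t) - \partial_\theta T(z,\tilde\theta(t))\tilde\theta'(t)\bigr\| \leq M_{\partial_\theta T}\|\theta'(t)-\tilde\theta'(t)\| + C_{\partial_\theta} M_{\theta'}\|\theta(t)-\tilde\theta(t)\|,
\]
where hypotheses (2)--(4) are applied to the two pieces.

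The main obstacle, and the place where one order is lost, is the first summand above. Standard cubic Hermite spline theory (de Boor) tells us that if $\theta \in C^l$ and we interpolate at $K$ uniform knots, then for $j=0,\dots,\kappa$ one has $\|\theta^{(j)}-\tilde\theta^{(j)}\|_\infty \leq \tilde M_{\kappa,j}\max_\xi\|\theta^{(\kappa)}(\xi)\|\, h^{\kappa - j}$; with $j=1$ this gives $\|\theta'-\tilde\theta'\|_\infty = O(h^{\kappa-1})$, one order worse than the $O(h^\kappa)$ bound on the function itself. I would simply cite (or sketch) this inequality rather than reprove it. Substituting both derivative bounds into the kinetic estimate produces $|I_{\mathrm{kin}}| \leq 2 M_{\partial_\theta T} M_{\theta'}\!\left(M_{\partial_\theta T}\tilde M_{\kappa,1} h^{\kappa-1} + C_{\partial_\theta}M_{\theta'}\tilde M_{\kappa,0} h^{\kappa}\right)\max_\xi\|\theta^{(\kappa)}(\xi)\|$, and after taking expectation over $z\sim\lambda$ (the bounds are uniform in $z$) and integrating over $t$, the $h^{\kappa-1}$ term dominates.

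Finally I would combine the kinetic and potential estimates: the potential contributes $O(h^\kappa)$, the kinetic contributes $O(h^{\kappa-1})$, so the total is $O(h^{\kappa-1})$, and $C_{\mathcal A}$ can be taken as any constant that dominates the sum of the prefactors, e.g.
\[
C_{\mathcal A} := 2 M_{\partial_\theta T}^2 M_{\theta'}\, \tilde M_{\kappa,1} + 2 M_{\partial_\theta T} C_{\partial_\theta} M_{\theta'}^2\, \tilde M_{\kappa,0} + M_F C M_\kappa,
\]
(absorbing an extra factor of $h$ where needed, since $h\leq 1$). The only genuinely subtle point is the loss-of-one-order phenomenon for spline derivatives; everything else is bookkeeping and uniform bounds. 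A mild concern I would flag is that the proof needs $\partial_\theta T$ and $\theta'$ to be essentially bounded, which is consistent with hypotheses (2) and (4), so no additional regularity beyond what is stated is required.
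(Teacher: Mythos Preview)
Your proposal is correct and follows essentially the same decomposition and chain of estimates as the paper's proof: split into kinetic and potential pieces, handle the potential via the $W_2$-Lipschitz property of $F$ together with Corollary~\ref{cor:w2_theta}, and handle the kinetic piece via the chain rule, the identity $|\|a\|^2-\|b\|^2|\leq(\|a\|+\|b\|)\|a-b\|$, a triangle-inequality split of $a-b$, and the standard cubic Hermite bounds on $\|\theta-\tilde\theta\|$ and $\|\theta'-\tilde\theta'\|$. The only minor slip is that you bound $\|\tilde\theta'(t)\|$ by $M_{\theta'}$ directly from hypothesis~(4), which constrains only $\theta'$; the paper instead carries a separate constant $M_{\tilde\theta'}$ for the spline derivative (justifiable via $\|\tilde\theta'\|\leq\|\theta'\|+O(h^{\kappa-1})$), and this constant appears in its final expression for $C_{\mathcal A}$.
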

\begin{proof}
    We split the proof into two parts, one part on a bound for the kinetic energy term, and the other on a bound  for the potential energy term. 
    The bound for the potential energy follows from the Lipschitz continuity of $F$ and Corollary \ref{cor:w2_theta}
    \[
    \int_0^1|F(\rho_{\theta(t)})-F(\rho_{\tilde{\theta}(t)})|dt \leq \int_0^1 M_FW_2(\rho_{\theta(t)},\rho_{\tilde{\theta}(t)})dt\leq M_F M_\kappa \cdot h^\kappa \max_{\xi \in [0,1]} \|\theta^{(\kappa)}(\xi)\|.
    \]

    The bound for the kinetic energy can be derived by assumptions 2--4 as
\begin{align*}
    &\int_0^1 \Big| \mathbb{E}_{z\sim \lambda}\Big\|\frac{d}{dt}x_{\theta(t)}(z)\Big\|^2-\mathbb{E}_{z\sim \lambda}\Big\|\frac{d}{dt}x_{\tilde{{\theta}}(t)}(z)\Big\|^2 \Big| dt \\
    &\leq\int_0^1\mathbb{E}_{z\sim \lambda}\Big|\Big\|\frac{d}{dt}x_{\theta(t)}(z)\Big\|^2-\Big\|\frac{d}{dt}x_{\tilde{{\theta}}(t)}(z)\Big\|^2 \Big| dt\\
    &\leq\int_0^1\mathbb{E}_{z\sim \lambda}\Big[\Big(\Big\|\frac{d}{dt}x_{\theta(t)}(z)\Big\|+\Big\|\frac{d}{dt}x_{\tilde{{\theta}}(t)}(z)\Big\|\Big)\Big\|\frac{d}{dt}x_{\theta(t)}(z)-\frac{d}{dt}x_{\tilde{{\theta}}(t)}(z)\Big\|\Big] dt\\
    &\leq\int_0^1\mathbb{E}_{z\sim \lambda}\Big[\Big(\|(\partial_{\theta}T)(z,\theta(t))\|\|\theta'(t)\|+\|(\partial_{\theta}T)(z,\tilde{\theta}(t))\|\|\tilde{\theta}'(t)\|\Big)\\
    &\quad\quad \cdot\|(\partial_{\theta}T)(z,\theta(t))\theta'(t)-(\partial_{\theta}T)(z,\tilde{\theta}(t))\tilde{\theta}'(t)\| \Big]dt\\
    &\leq(M_{\partial_{\theta}T}M_{\theta'}+M_{\partial_{\theta}T}M_{\tilde{\theta}'})\int_0^1\mathbb{E}_{z\sim \lambda}\|(\partial_{\theta}T)(z,\theta(t))\theta'(t)-(\partial_{\theta}T)(z,\tilde{\theta}(t))\tilde{\theta}'(t)\| dt\\
    &= M_{\partial_{\theta}T}(M_{\theta'}+M_{\tilde{\theta}'})\int_0^1 \mathbb{E}_{z\sim \lambda}\|(\partial_{\theta}T)(z,\theta(t))\theta'(t)-(\partial_{\theta}T)(z,\tilde{\theta}(t))\tilde{\theta}'(t)\| dt\\
    & \leq  M_{\partial_{\theta}T}(M_{\theta'}+M_{\tilde{\theta}'})\int_0^1\mathbb{E}_{z\sim \lambda}\Big[\|(\partial_{\theta}T)(z,\theta(t))\theta'(t)-(\partial_{\theta}T)(z,\theta(t))\tilde{\theta}'(t)\|\\
    &\quad\quad+\|(\partial_{\theta}T)(z,\theta(t))\tilde{\theta}'(t)-(\partial_{\theta}T)(z,\tilde{\theta}(t))\tilde{\theta}'(t)\|\Big] dt\\
    &\leq M_{\partial_{\theta}T}(M_{\theta'}+M_{\tilde{\theta}'})\int_0^1 \mathbb{E}_{z\sim \lambda}\Big[\|(\partial_{\theta}T)(z,\theta(t))\|\|\theta'(t)-\tilde{\theta}'(t)\|\\
    &\quad\quad+\|(\partial_{\theta}T)(z,\theta(t))-(\partial_{\theta}T)(z,\tilde{\theta}(t))\|\|\tilde{\theta}'(t)\|\Big] dt\\
    &\leq M_{\partial_{\theta}T}(M_{\theta'}+M_{\tilde{\theta}'}) \int_0^1 \left(M_{\partial_{\theta}T}\|\theta'(t)-\tilde{\theta}'(t)\|+C_{\partial_{\theta}}\|\theta(t)-\tilde{\theta}(t)\|\|\tilde{\theta}'(t)\|\right) dt\\
    &\leq M_{\partial_{\theta}T}(M_{\theta'}+M_{\tilde{\theta}'})\left(M_{\partial_{\theta}T}M_{\kappa-1}h^{\kappa-1}+C_{\partial_{\theta}}M_{\kappa}h^{\kappa}M_{\tilde{\theta}'}\right)\max_{\xi \in [0,1]} \|\theta^{(\kappa)}(\xi)\|.
\end{align*}

A combination of the above two bounds concludes with 
\[
C_{\mathcal{A}} = \max \{M_F M_\kappa h, M_{\partial_{\theta}T}(M_{\theta'}+M_{\tilde{\theta}'})(M_{\partial_{\theta}T}M_{\kappa-1}+C_{\partial_{\theta}}M_{\kappa}M_{\tilde{\theta}'}h) \}.
\]
    
\end{proof}

We recognize that hypothesis 1 on Theorem \ref{theo:act_bound}, which requires $F$ to be Lipschitz with respect to the $W_2$ metric, is a strong assumption for general functionals $F$. When $F$ consists of linear potentials and bilinear interaction potentials whose corresponding $V(x)$ and $W(x-y)$ are Lipschitz continuous with constants $M_V$ and $M_W$, the functional $F(\rho) = \int_{\R^d} V(x)\rho(x)dx + \int_{\R^{d}\times \R^d}W(x-y)\rho(x)\rho(y)dxdy$ is also Lipschitz. To show this, let $\mu,\nu\in\mathcal{P}(\R^d)$ and $T_{\mu\to\nu}$ denote the Monge map between them, then
\begin{align*}
    &|F(\mu)-F(\nu)| = \\
    &= |\E{X}{\mu}[V(X)]+\E{X}{\mu}\E{Y}{\mu}[W(X-Y)]-\left(\E{X}{\nu}[V(X)]+\E{X}{\nu}\E{Y}{\nu}[W(X-Y)]\right)|\\
    &\leq \E{X}{\mu}| V(X)-V(T_{\mu\to\nu}(X))|+ \E{X}{\mu}\E{Y}{\mu}|W(X-Y)-W(T_{\mu\to\nu}(X)-T_{\mu\to\nu}(Y))|\\
    &\leq (M_V + 2M_{W})W_2(\mu,\nu).
\end{align*}
Observe that the last inequality follows from using the Lipschitz condition on $V$ and $W,$ and the following application of H\"older inequality,

\[\E{X}{\mu}[|X-T_{\mu\to\nu}(X)|]\leq (\E{X}{\mu}[|X-T_{\mu\to\nu}(X)|^{2}])^{1/2}(\E{X}{\mu}[1^2])^{1/2} = (\E{X}{\mu}[|X-T_{\mu\to\nu}(X)|^{2}])^{1/2}.\]
\section{NODEs}
See \cite{chen_neural_nodate} for the standard reference on NODEs. For the NODE
\[\frac{d\psi(\tau, z)}{d\tau} = v_{\theta}(\tau, \psi(\tau, z)), \quad \psi(0, z) = z \sim \lambda,\]
\label{app:NODEs}
initialized at $\rho(0,z) \sim \mathcal{N}(0,I),$  let $\rho(\tau,\cdot):= \psi(\tau,\cdot)_{\#}\lambda,$ then
\begin{align}
\frac{d}{d\tau}\log(\rho(\tau,x)) |_{x=\psi(\tau,z)} &= -\nabla \cdot v_{\theta}(\tau, \psi(\tau,z))),\label{eq:entorpy_est}\\
\text{ and } \quad \frac{d(\nabla_x \log \rho(\tau,x))}{d\tau}|_{x=\psi(\tau,z)} &= -\nabla(\nabla \cdot v_{\theta}(\tau, \psi(\tau,z)))  \nonumber \\
&- (\nabla v_{\theta}(\tau, \psi(\tau,z)))^T \nabla_x \log \rho(\tau,x)|_{x=\psi(\tau,z)}
\label{eq:grad_log_node}
\end{align}
with initial conditions $\log(\rho(0,z)) = \log(\rho_{\mathcal{N}}(z))$ , where $\rho_{\mathcal{N}}(z)$ is the density of a standard normal distribution, and $\nabla_x \log \rho(0,x)|_{x=z} = -z
$ respectively. Although it is not necessary to use a Gaussian density for the initial condition, it is common in the literature, these quantities might not be accessible for other densities. 

The result for \eqref{eq:entorpy_est} is standard and we do not include the proof here. Although Equation \eqref{eq:grad_log_node} has been proved before, see e.g., \cite{zhou_score-based_2025}, we provide a proof for completeness.
\begin{proof}
    For this proof, we assume $\rho(\tau,x)$ has continuous second-order derivatives with respect to $\tau$ and $x.$ In what follows, we use the short hand notation $\psi$ to indicate $\psi(\tau, z),$ and $v$ instead of $v_{\theta}$. 
    \begin{align*}
        \frac{d}{d\tau}\nabla_x\log(\rho) &= \partial_{\tau}\nabla_x\log(\rho) +\nabla_x^2\log(\rho)\, v\\
        &= \nabla_x\rho^{-1}\partial_{\tau}\rho + \nabla_x^2\log(\rho)\, v\\
        &= -\nabla_x\rho^{-1}\nabla\cdot(\rho v) + \nabla_x^2\log(\rho)\, v\\
        &= -\nabla_x(\nabla_x\log(\rho)^T v+\nabla_x\cdot v)+\nabla_x^2\log(\rho)\, v\\
        &= -\nabla_x v^T\log(\rho)-\nabla_x(\nabla_x\cdot v).
    \end{align*}
The first equality follows from the dynamical system $\frac{d\psi}{d\tau} = v(\tau, \psi(\tau, z)),$ the third equality follows from the continuity equation $\partial_{\tau}\rho = -\nabla\cdot(\rho v) $, and the fourth equality from $\rho^{-1}\nabla_x\rho = \nabla_x\log(\rho)$.
\end{proof}

%

\section{Algorithms/Implementation details}
\label{app:algos}
We use two separate optimizers: one for the boundary conditions $(\theta_0,\theta_1)$ (Algorithm \ref{alg:coup_opt}) and another for the control points $\{\theta_{t_i}\}_{i = 1}^K$ (Algorithm \ref{algo:path_opt}). We use Adam optimizers \cite{paszke_pytorch_2019} in both cases. For the learning rate scheduler, we used StepLR or cosine, with specifics provided for each experiment.

A key implementation challenge is preserving PyTorch's computational graph. Specifically, the `torch.load` operation does not retain the graph for gradient-based optimization. This is particularly problematic in Algorithm \ref{algo:path_opt}, where the interpolated points $\theta(t_j)$ used in the trapezoidal integration must track the computational graph with the spline control points to enable proper gradient flow during optimization. When `torch.load` is used to assign models, this computational graph is lost, disconnecting the density path from the control point optimization. 

To address this issue, we re-implemented the MLP architecture from scratch, passing weights and biases as explicit function arguments rather than storing them as internal model parameters, see Algorithm \ref{algo:parametric_mlp}. While effective, this approach introduces limitations to architectural flexibility: testing new network architectures requires either hard-coding them or developing alternative methods to preserve computational graphs when loading pre-trained models. We leave overcoming this limitation to future research.

\begin{algorithm}[h]
\caption{Path optimization}
\label{algo:path_opt}
\begin{algorithmic}[1]
\Require Parametric points $\{\theta_{t_i}\}_{i = 0 }^{K+1},$ potential function $F,$ $N$  samples$\{z_i\}_{i = 1}^M\sim\lambda,$ number of optimization steps $Q_1$. 
\State Initialize spline in parameter space with $\{\theta_{t_i}\}_{i = 0 }^{K+1}$.
\For{number of iterations $Q_1$}
    \State Obtain $\{\theta(t_j)\}_{j = 0}^N$ by evaluating the current spline at time steps $t_j = \frac{j}{N}$.
    \State Evaluate the $N+1$ pushforwards $\{T_{\theta(t_j)}(z_i)\}_{i = 0,j = 1}^{N,M}$. 
    \If{Entropy or Fisher Information}
        \State Obtain entropy Equation \eqref{eq:entorpy_est} score from \eqref{eq:grad_log_node}.
    \EndIf
    \State Evaluate the Equation \eqref{eq:action_theta} by integrating in time by trapezoidal rule and evaluating the expectations via Monte Carlo. 
    \State Update weights $\{\theta_{t_i}\}_{i = 1}^{K}$ via gradient descent.
\EndFor
\State \Return $\{\theta_{t_i}\}_{i = 1}^{K}$
\end{algorithmic}
\end{algorithm}

\begin{algorithm}[ht]
\caption{Coupling optimization}
\label{alg:coup_opt}
\begin{algorithmic}[1]
\Require Sampling function from $\rho_0$, sampling function from $\rho_1$, potential function $F$, control points $\{\theta_{t_i}\}_{i = 0 }^{K+1},$ weights $\alpha_0,\alpha_1,$ samples$\{z_k\}_{k = 1}^M\sim\lambda,$ number of optimization steps $Q_2$.
\For{number of iterations $Q_2$}
    \State Sample $M$ points from $\{x_0^k\}_{k = 1}^M\sim\rho_0,$ and $\{x_1^k\}_{k = 1}^M\sim\rho_1.$  
    \State Evaluate $\ell = \sum_{j = 0}^1 \frac{1}{M}\left[\sum_{k=1}^{M}\alpha_j L((T_{\theta_j}(z_k), x_j^k)\right]$.
     \State Obtain $\{\theta(t_j)\}_{j = 0}^N$ by evaluating the current spline at time steps $t_j = \frac{j}{N}$. 
    \State Evaluate the $N+1$ pushforwards $\{T_{\theta(t_j)}(z_k)\}_{i = 0,k=1}^{N,M}$.
    \If{Entropy or Fisher Information}
        \State Obtain entropy Equation \eqref{eq:entorpy_est} score from \eqref{eq:grad_log_node}.
    \EndIf
    \State Evaluate the Equation \eqref{eq:action_theta} by integrating in time by trapezoidal rule and evaluating the expectations via Monte Carlo. 
    \State Update $\theta_0, \theta_1$ by minimizing \eqref{eq:theta_coup} via gradient descent.
\EndFor
\State \Return $\theta_0,\theta_1$
\end{algorithmic}
\end{algorithm}

\begin{algorithm}[ht]
\caption{PDPO}
\label{alg:PDPO}
\begin{algorithmic}[1]
\Require Sampling function for $\rho_0$, sampling function for $\rho_1$, reference density $\lambda$ potential function $F$, number of control points $K,$ weights $\alpha_0,\alpha_1,$ initialized $\theta_0,\theta_1,$ number of total iterations $Q,$ number of path optimization steps $Q_1,$ number of coupling optimization steps $Q_2,$ number of geodesic-warmup steps $Q_3.$
\State Initialize $\theta_{t_i}$ using an equispaced linear interpolation of $\theta_0$ and $\theta_1$.
\State Run $Q_3$ steps of geodesic warmup.
\For{number of iterations $Q$}
    \State Sample $M$ points $\{z_i\}_{i = 1}^M\sim\lambda,$
    \State Update $\{\theta_{t_i}\}_{i = 1 }^{K}$ by minimizing the action of the points $\{z_i\}_{i = 1}^M$ using Algorithm \ref{algo:path_opt}.
    \State Update $(\theta_0,\theta_1)$ using the points $\{z_i\}_{i = 1}^M$ in Algorithm \ref{alg:coup_opt}.
\EndFor
\State \Return $\{\theta_{t_i}\}_{i = 0 }^{K+1}.$
\end{algorithmic}
\end{algorithm}

\begin{algorithm}
\caption{Parameterized MLP Forward Pass}
\label{algo:parametric_mlp}
\begin{algorithmic}[1]
\Require Architecture $\text{arch} = [d, w, L]$ where $d$ is input/output dimension, $w$ is hidden width, $L$ is number of layers
\Require Input $x \in \mathbb{R}^{n \times d}$, parameter vector $\theta \in \mathbb{R}^p$
\Require Time-varying flag $\text{time\_varying} \in \{\text{true}, \text{false}\}$

    \State $d_{\text{in}} \gets  {\text{ if time\_varying }} d +1 {\text{ else }}  d$ \Comment{Adjust input dimension}
\State $\text{idx} \gets 0$ \Comment{Current position in $\theta$}
\State $h \gets x$

\vspace{0.2cm}
\State \textbf{// Input Layer}
\State Extract $W_1 \in \mathbb{R}^{w \times d_{\text{in}}}$ from $\theta[\text{idx} : \text{idx} + w \cdot d_{\text{in}}]$
\State $\text{idx} \gets \text{idx} + w \cdot d_{\text{in}}$
\State Extract $b_1 \in \mathbb{R}^w$ from $\theta[\text{idx} : \text{idx} + w]$
\State $\text{idx} \gets \text{idx} + w$
\State $h \gets \sigma(W_1 h + b_1)$ \Comment{$\sigma$ is activation function}

\vspace{0.2cm}
\State \textbf{// Hidden Layers}
\For{$\ell = 2$ \textbf{to} $L-1$}
    \State Extract $W_\ell \in \mathbb{R}^{w \times w}$ from $\theta[\text{idx} : \text{idx} + w^2]$
    \State $\text{idx} \gets \text{idx} + w^2$
    \State Extract $b_\ell \in \mathbb{R}^w$ from $\theta[\text{idx} : \text{idx} + w]$
    \State $\text{idx} \gets \text{idx} + w$
    \State $h \gets \sigma(W_\ell h + b_\ell)$
\EndFor

\vspace{0.2cm}
\State \textbf{// Output Layer}
\State Extract $W_L \in \mathbb{R}^{d \times w}$ from $\theta[\text{idx} : \text{idx} + d \cdot w]$
\State $\text{idx} \gets \text{idx} + d \cdot w$
\State Extract $b_L \in \mathbb{R}^d$ from $\theta[\text{idx} : \text{idx} + d]$
\State $h \gets W_L h + b_L$ \Comment{No activation on output}

\State \Return $h$
\end{algorithmic}
\end{algorithm}
\subsection{Hyperparameter Sensitivity and Tuning}

\textbf{Neural ODE architecture:} The neural ODE model defines the parameter space and must be sufficiently expressive to capture the boundary distributions. Key considerations:
\begin{itemize}
    \item \textbf{Architecture sizing:} Use larger networks than minimally required for boundary fitting. For example, while a [2,64,4] architecture can learn our V-neck boundaries accurately, we needed [2,128,4] to capture the solution complexity when potential terms significantly alter the density shape.
    \item \textbf{Rule of thumb:} If the potential energy terms are expected to create new modes or dramatically change boundary shapes, increase the hidden dimension.
\end{itemize}

\textbf{Neural ODE Integration:} Two factors affect forward simulation quality:
\begin{itemize}
    \item \textbf{Solver choice:} We use midpoint integration, as it is standard in flow-based generative models.
    \item \textbf{Integration steps:} 10 steps suffice for most problems. Use at least the number of steps required by your boundary NODEs for accurate sampling from the target distributions.
\end{itemize}

\textbf{Action Approximation (Time Steps N):} This is an important parameter that controls the temporal discretization accuracy.

\begin{table}[h]
\centering
\caption{Action convergence with respect to time discretization steps N.}
\label{tab:time_steps}
\begin{tabular}{cccc}
\hline
N & Action & Kinetic Energy & Potential Energy \\
\hline
10 & 66.06 & 37.38 & 28.68 \\
20 & 42.09 & 37.77 & 12.82 \\
30 & 30.31 & 29.46 & 0.788 \\
40 & 30.20 & 29.72 & 0.735 \\
50 & 30.10 & 29.37 & 0.725 \\
\hline
\end{tabular}

\end{table}

\textbf{Recommendation:} Start with N=20 minimum and increase until the action converges (typically N=30-50). The dramatic drop from N=10 to N=20 demonstrates why sufficient discretization is crucial. This parameter is related to the complexity of the problem: longer distances and more complex energy landscapes require finer temporal discretization.

\textbf{Monte Carlo Samples M:} The number of samples M affects the quality of the action estimate. Even for high-dimensional problems (e.g., 50D with M=1000), we maintain solution quality as shown in the Appendix.

\textbf{Computation of $\log(\rho)$ and $\nabla \log(\rho)$:} Following the scaling approach in the CNF literature [4], we use the Hutchinson trace estimator to make the ODE for $\log(\rho)$ computationally tractable. To the best of our knowledge, there are no unbiased estimator techniques for $\nabla \log(\rho)$, and we consider developing such estimators an interesting direction for future work.
\section{Additional Numerical Results}
\label{app:num_res}

 In this subsection, we report all the experimental details. In Table \ref{tab:experimental-setup-densities} we report the hyperparameters for the algorithms and the boundary conditions for each problem. The notation [x,y,z] in Table \ref{tab:experimental-setup-densities} defines the architecture of the networks, x is the input dimension, y is the number of neurons per layer, and z is the number of layers. We assumed the value of the constants $\alpha_0 = \alpha_1$ in Algorithm \ref{alg:coup_opt}, which we report as $\alpha$ in Table \ref{tab:experimental-setup-densities}.

Here we report that \cite{liu_generalized_2024} is under  CC BY-NC licence, \cite{pooladian_neural_2024} is under  CC BY-NC 4.0 License, and \cite{lin_alternating_2021} has no license.  
\label{app:algos}

\label{app:num_res}
\subsection{Pre-training strategy}
\subsubsection{Zero initialized boundary parameters}
\label{app:fm_pre}

Our framework can use pre-trained parameters $\theta_0$ and $\theta_1$ for the boundary conditions $\rho_0$ and $\rho_1,$ respectively. To pre-train a parameter, we use Flow Matching (FM) \cite{lipman_flow_2022} to learn a velocity field to sample from a dataset. The choice of training framework for the boundary parameters motivates to adopt the FM loss function as our dissimilarity metric $L.$ As demonstrated by \cite{benton2024error}, the FM loss function provides an upper bound on the $W_2$ distance between the target density and the pushforward density. Therefore, minimizing the FM loss function guarantees the feasibility criteria. The simulation-free training scheme of FM offers a computationally efficient method for pretraining boundary conditions and guaranteeing the feasibility of the boundaries throughout the density path optimization algorithm. In Figure \ref{fig:path_zero} we show the pushforward of the spline obtained from initializing all the control points at $\mathbf{0}.$ In Figure \ref{fig:opt_path_zero} we show the resulting optimized path using the PDPO strategy. The running time for this case is 24 m 32s, leading to an action value of 40.31. Since the collocation points are initialized to $\mathbf{0}$, we did not use a geodesic-warmup. Compare this solution with the one reported in Section \ref{sub:numerics-mean-field}.
\label{app:no_pre_pars}
\begin{figure}[h]
    \centering
    \subfloat[Zero initialized boundaries]{\includegraphics[width = 0.4\linewidth]{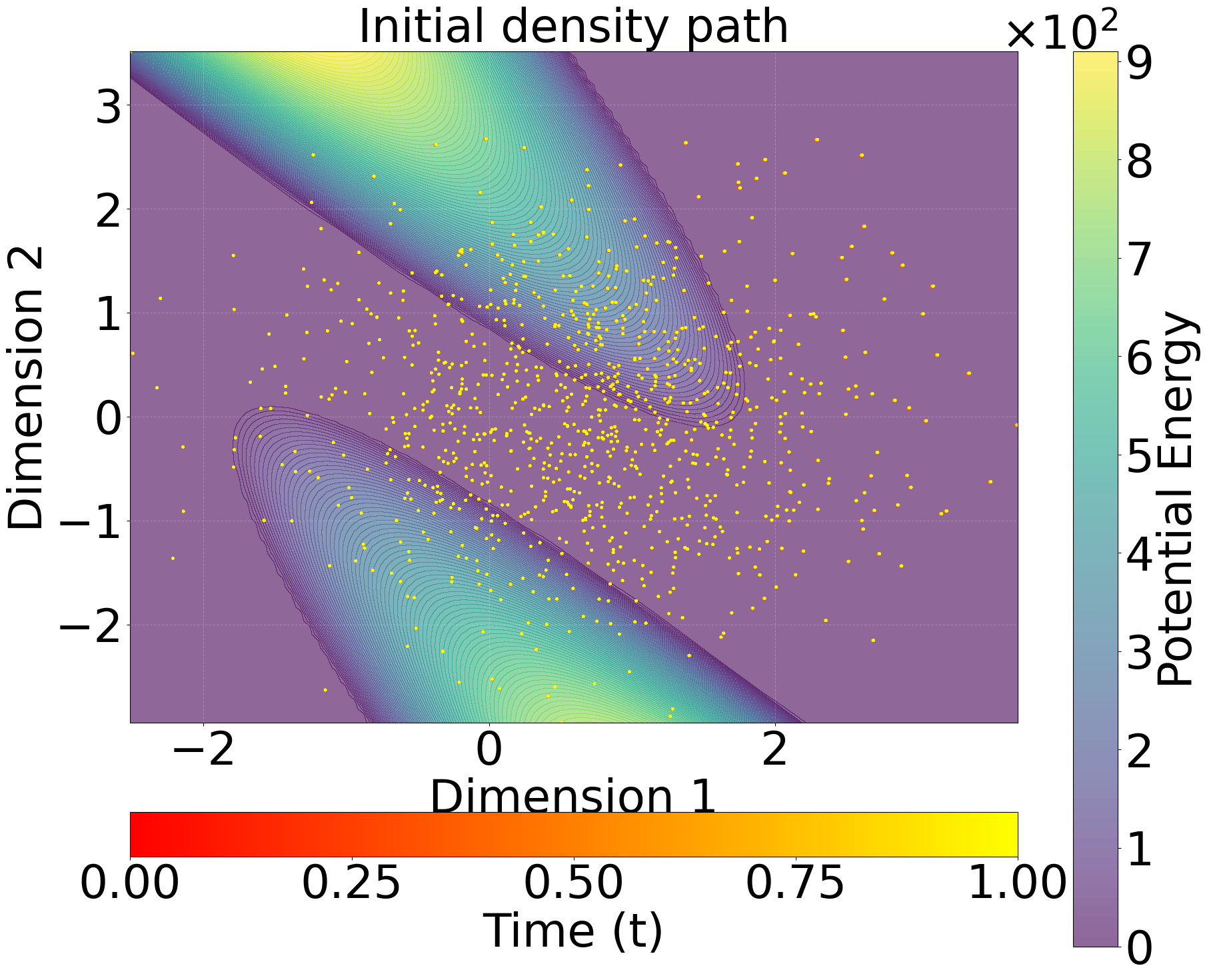}
    \label{fig:path_zero}}
    \hspace{10pt}
    \subfloat[Optimized path for zero initialized bds]{\includegraphics[width = 0.4\linewidth]{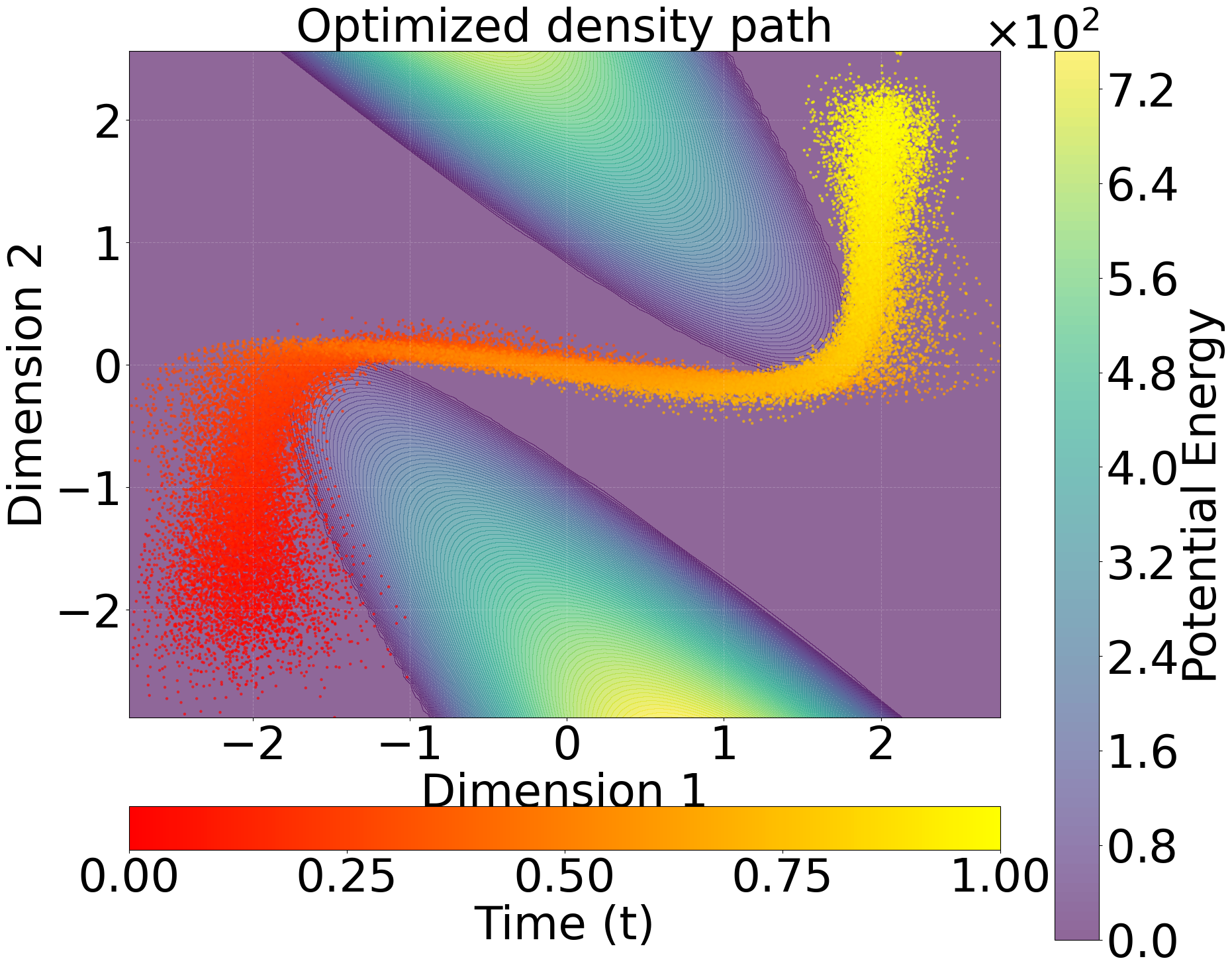}
    \label{fig:opt_path_zero}}
    \caption{No pertaining for $\theta_0$ and $\theta_1$.}
\end{figure}

\subsubsection{Reusability of boundary parameters}
\label{app:reus_parm}

A crucial feature of PDPO is its boundary parameter reusability: the boundary condition models $\theta_0$ and $\theta_1$ can be trained independently of an action functional's interior dynamics. This independence enables flexible reuse of the boundary parameters.

Specifically, when solving multiple action minimization problems that share boundary constraints, PDPO offers the following flexibility: after training a library of generative models \textbf{with shared architecture} $\{\theta_{\rho_1}, \theta_{\rho_2}, \ldots, \theta_{\rho_k}\}$ for distributions $\{\rho_1, \rho_2, \ldots, \rho_k\}$, any action minimization problem with boundary conditions in this set can reuse the parameters of the pretrained models, regardless of the direction of transport or the specific action functional. For example, if Problem 1 transports from $\rho_i$ to $\rho_j$ and Problem 2 transports from $\rho_j$ to $\rho_i$, both problems can use the same pretrained pair $(\theta_{\rho_i}, \theta_{\rho_j})$ by simply swapping their roles as initial and terminal conditions.

This modularity significantly reduces computational cost when solving families of related problems. For instance, in the opinion depolarization example, once $\theta_0$ and $\theta_1$ are trained (29m35s and 31m12s, respectively), they can be reused for alternative action functionals, different polarization dynamics, or even reversed boundary conditions, provided the marginal distributions remain unchanged.

We treat the pretraining phase for the boundary models as a \emph{one-time investment}.

To demonstrate this reusability feature, we reuse the pretrained boundary-condition models from the VNEFI example in a different setting, under a modified action functional and with reversed flow direction. In the VNEFI problem, the boundary conditions are
\[
\rho_0 = \mu = \mathcal{N}\!\left(
\begin{bmatrix} -11\\[-2pt]-1 \end{bmatrix},\, 0.5I
\right), \qquad
\rho_1 = \nu = \mathcal{N}\!\left(
\begin{bmatrix} 11\\[-2pt]1 \end{bmatrix},\, 0.5I
\right).
\]
Denote the corresponding pretrained models by $\theta_\mu$ and $\theta_\nu$ 
(training times: 0.38\,s and 0.39\,s, respectively).

We then apply these same boundary models to solve the \textbf{SCC problem} from~\cite{liu_generalized_2024}, reversing the flow direction so that $\rho_0 = \nu$ and $\rho_1 = \mu$. 
Figure~\ref{fig:rev_stunnel_ctrl_points} shows the pushforward of the control points, 
Figure~\ref{fig:rev_stunnel_path} the interpolated trajectory, 
and Figure~\ref{fig:rev_stunnel_gsbm} the corresponding GSBM solution. 
Quantitative results are provided in Table~\ref{tab:comp_2}.

As before, PDPO achieves a lower action value and shorter training time than GSBM. 
It is important to emphasize that the boundary parameters are \emph{reused} from the VNEFI example, 
so the cost of FM training is \emph{amortized} across problems. 
Both methods yield similar constraint-violation levels, with GSBM showing a slightly smaller residual.

\begin{table}[t]
    \centering
    \caption{Comparison for the SCC problem. 
    The quantities $W_2(\rho_i)$, $i=0,1$, denote the empirical Wasserstein-2 distances at times $t=0$ and $t=1$, computed using the POT library~\cite{flamary_pot_2021}.}
    \label{tab:comp_2}
    \begin{tabular}{lcccc}
        \toprule
        Problem (method) 
        & $\mathcal{A}[\rho_t]$ 
        & $\displaystyle\int_0^1\!\mathbb{E}_{\rho_t}\! \big[V(X)\big]\,dt$ 
        & $(W_2(\rho_0),\,W_2(\rho_1))$ 
        & Time \\[2pt]
        \midrule
        SCC (PDPO)  & \textbf{432.23} & 3.42 & \textbf{(0.12, 0.11)} & \textbf{36\,m\,12\,s} \\
        SCC (GSBM)  & $440.02 \pm 0.75$ & \textbf{3.12} & (0.59, 1.76) & 53\,m\,13\,s \\
        \bottomrule
    \end{tabular}
\end{table}

\begin{figure}[h]
    \centering
    \subfloat[$(T_{\theta_{t_i}})_{\#}\lambda$]{\includegraphics[width = 0.33\linewidth]{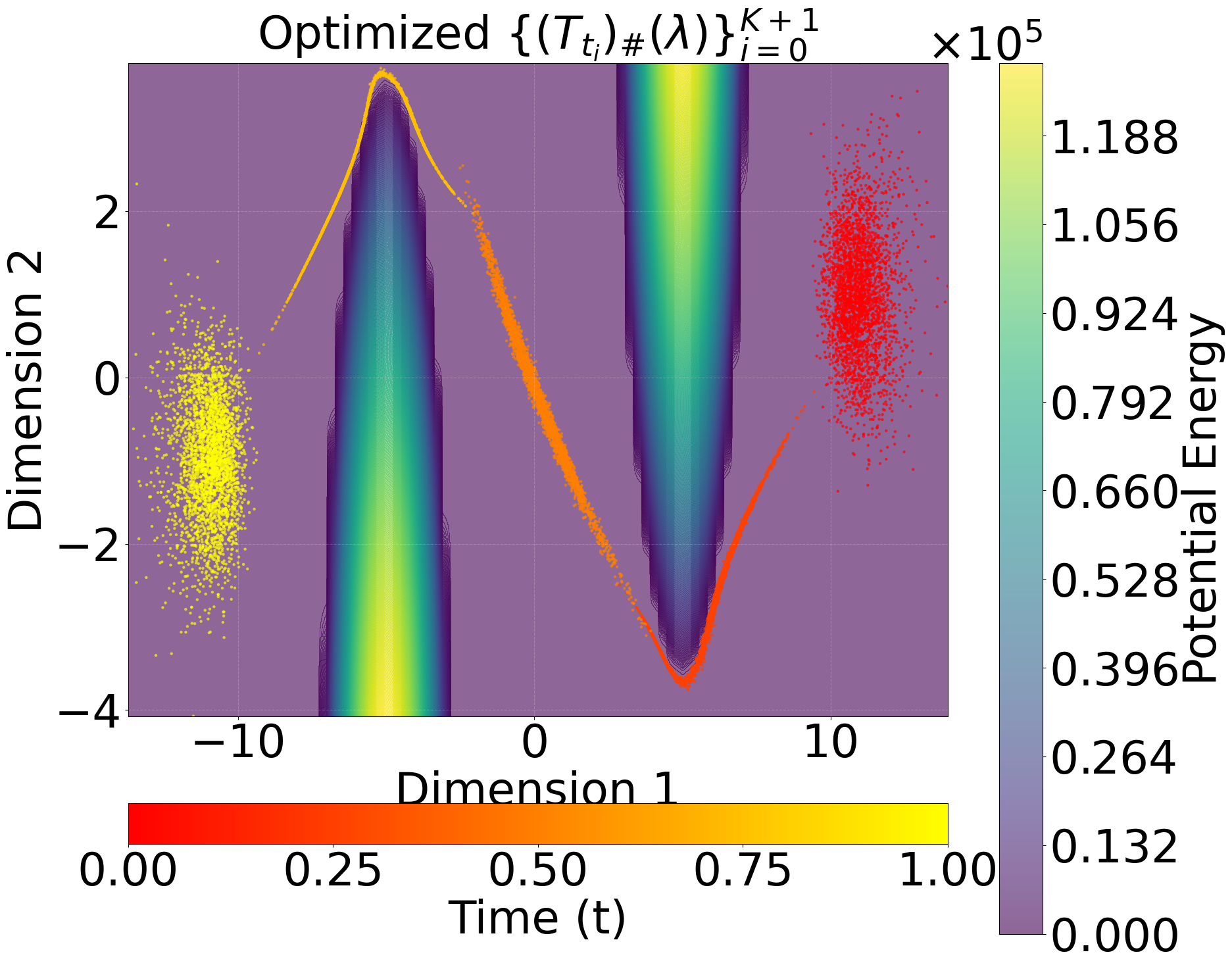}
    \label{fig:rev_stunnel_ctrl_points}}
    \subfloat[PDPO $(T_{\theta_t)_{\#}\lambda}$]{\includegraphics[width = 0.33\linewidth]{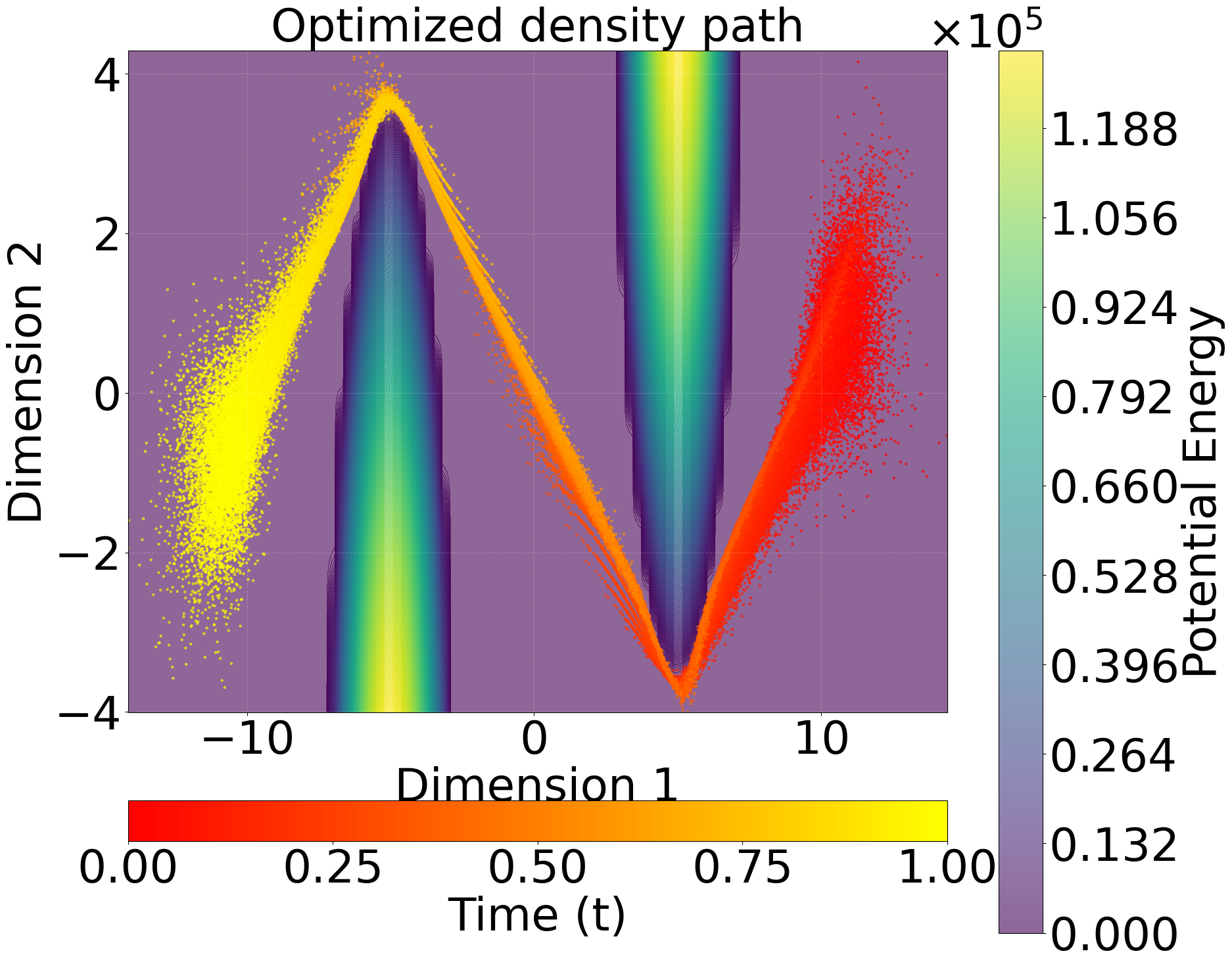}
    \label{fig:rev_stunnel_path}}
    \subfloat[GSBM]{\includegraphics[width = 0.33\linewidth]{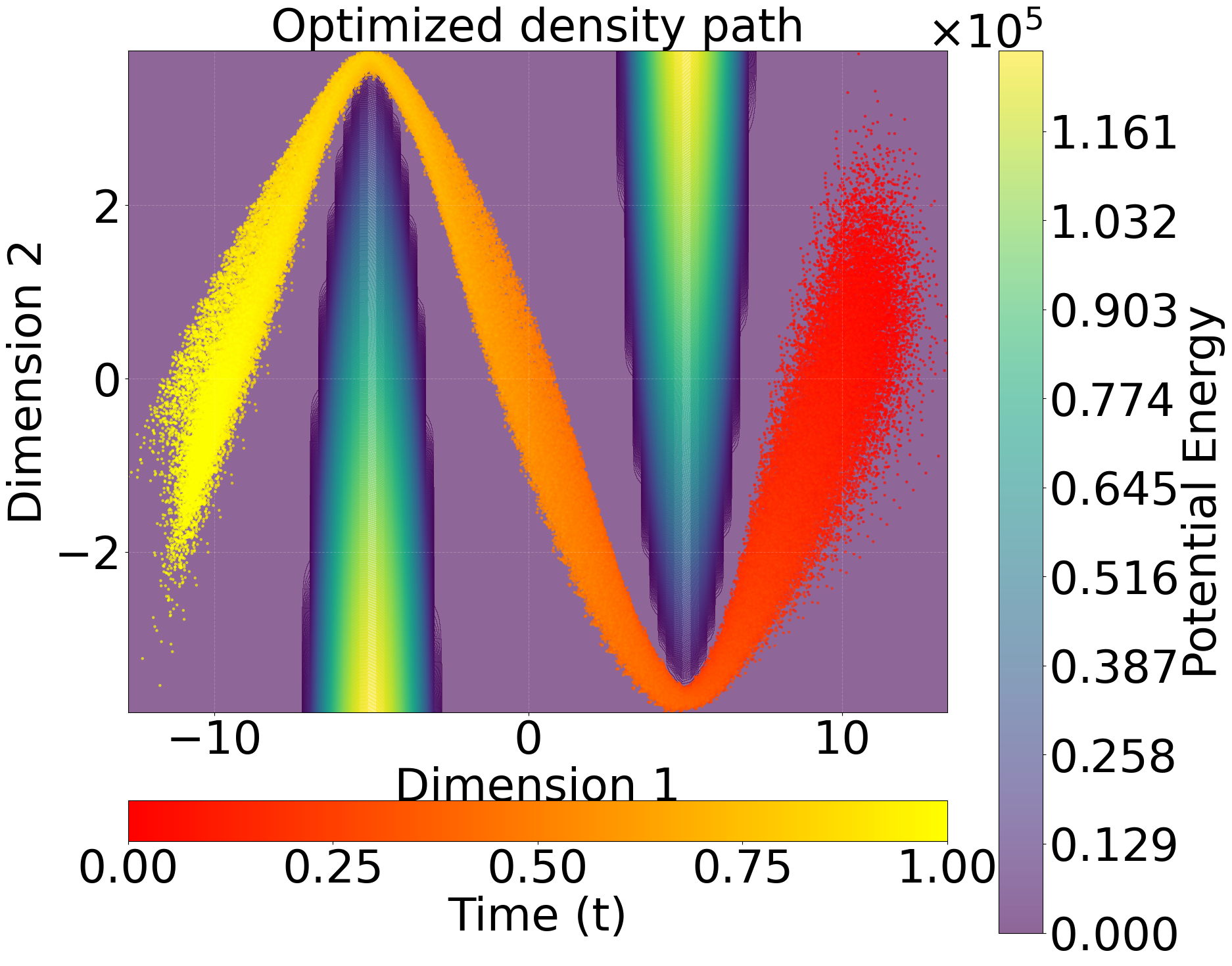}
    \label{fig:rev_stunnel_gsbm}}
    \caption{Comparison for SCC with new boundary conditions.}
\end{figure}

\subsection{Geodesic warmup}
\label{app:geo_init}
In the geodesic warmup, we optimize the linearly initialized control points using Algorithm \ref{algo:path_opt} with $F(\rho) = 0$. This approach effectively reduces the computational cost of our algorithm. As we show in Figure \ref{fig:action_geo_warmup}, using a geodesic warmup drastically improves the speed of convergence and allows PDPO to reach a lower local minimum in the s-curve example. 

To reduce the computational cost of the geodesic warmup, we consider at most $N = 15$ points for the trapezoidal rule. The solution computed without geodesic warmup took 5m 32ss, whereas the solution with geodesic warmup took 5m 57s. Thus, the computational overhead of the geodesic warmup is negligible. 

In Figure \ref{fig:geo_warmup}, we show a comparison of a solution without geodesic warmup (Figures \ref{fig:init_example} and \ref{fig:sol_no_warmup}) and with geodesic warmup (Figures \ref{fig:geo_warmup} and \ref{fig:sol_geo_warmup}).

\begin{figure}[h]
    \centering
    \includegraphics[width=0.5\linewidth]{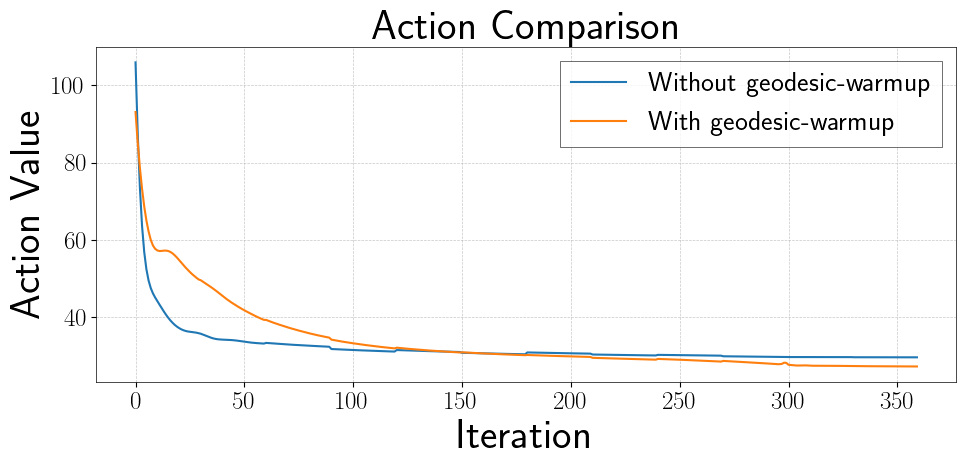}
    \caption{Comparison of action for solutions with and without geodesic warmup.}
    \label{fig:action_geo_warmup}
\end{figure}

\begin{figure}[h]
    \centering
    \subfloat[Linear interpolation in parameter space]{
    \includegraphics[width = 0.24\textwidth]{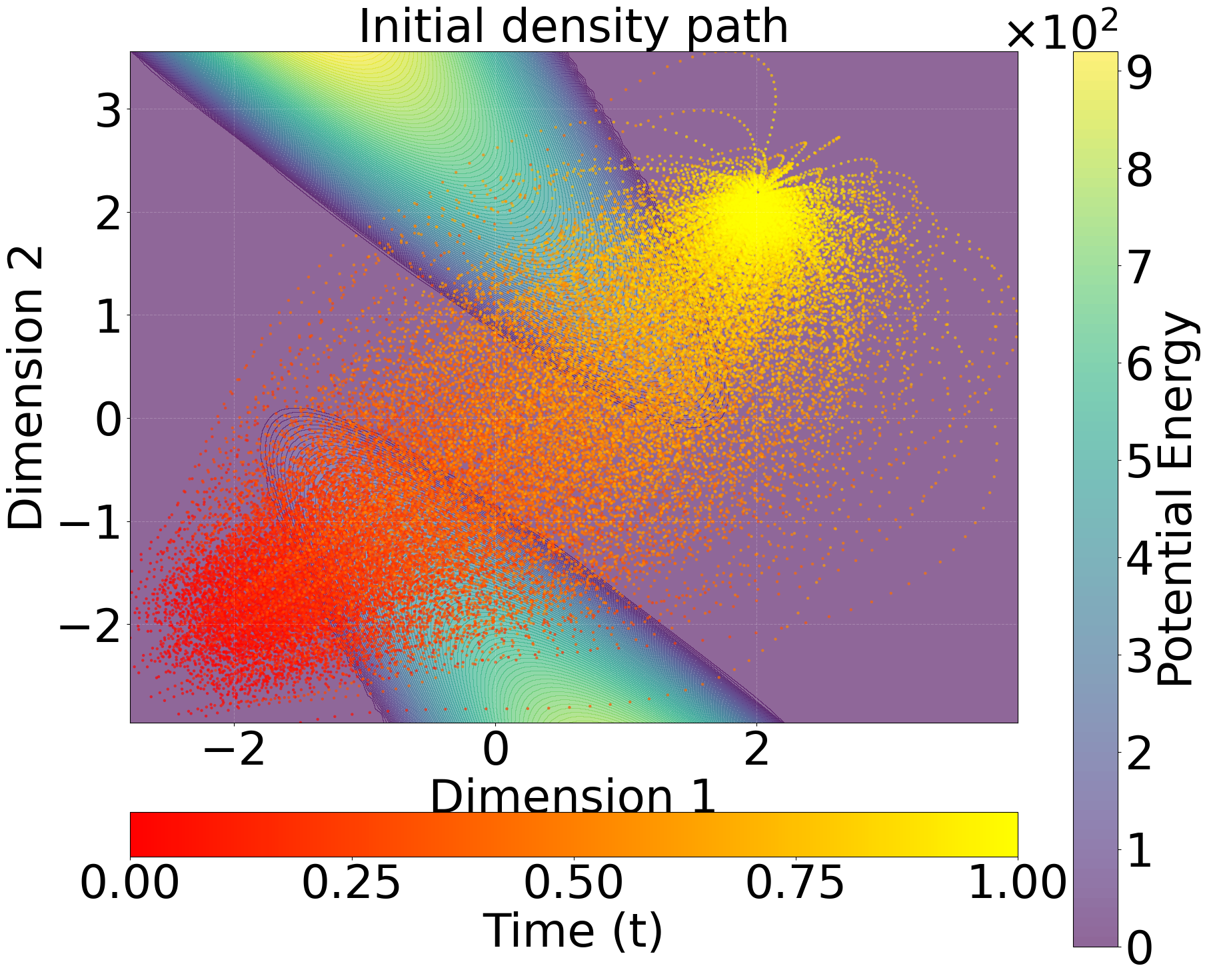}
    \label{fig:init_example}
    }
    \subfloat[Solution without geodesic warmup]{\includegraphics[width = 0.24\linewidth]{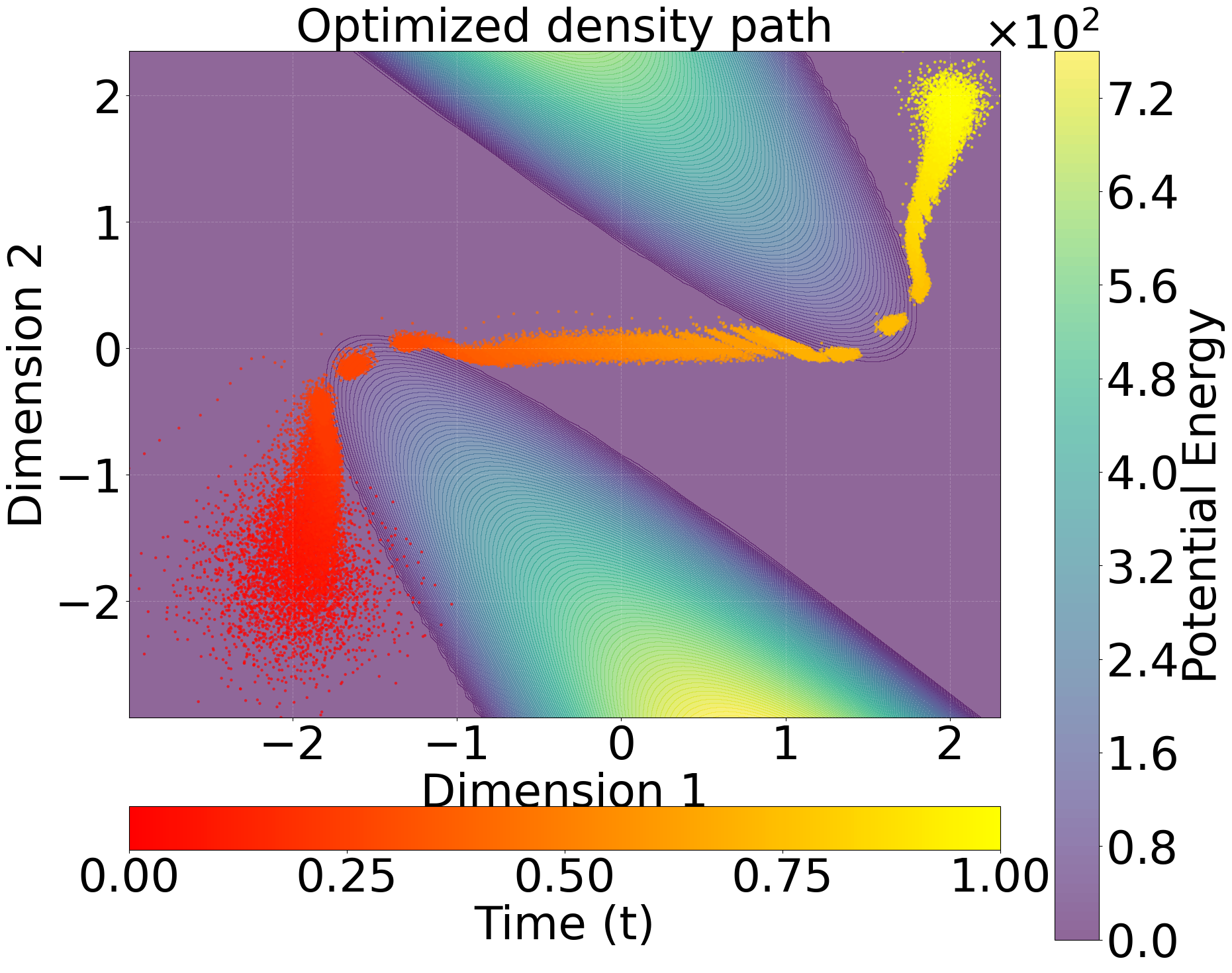}
    \label{fig:sol_no_warmup}}
    \subfloat[Geodesic warmup]{
    \includegraphics[width = 0.24\linewidth]{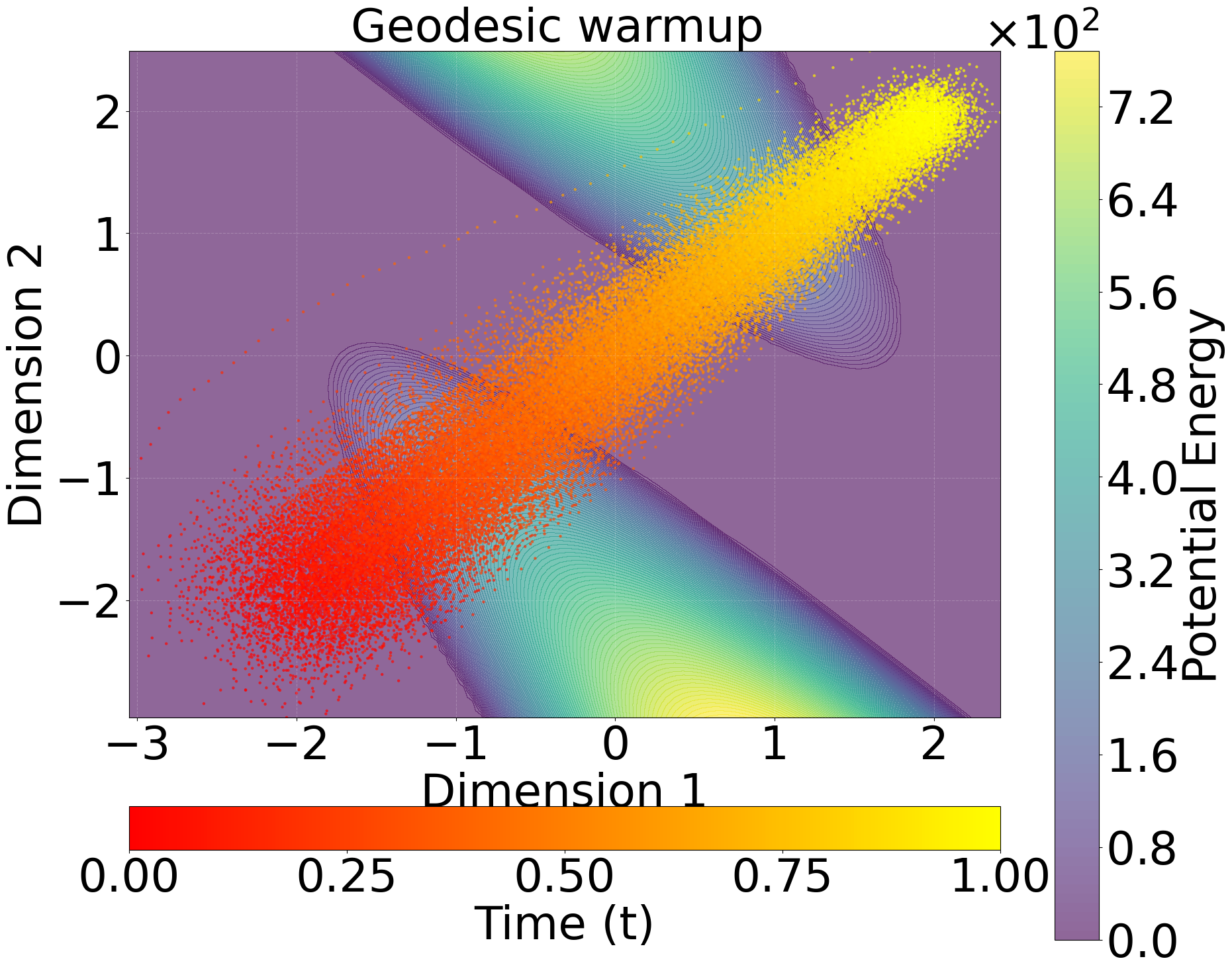}
    \label{fig:example_geo}
    }
    \subfloat[Solution with geodesic warmup]{\includegraphics[width = 0.24\linewidth]{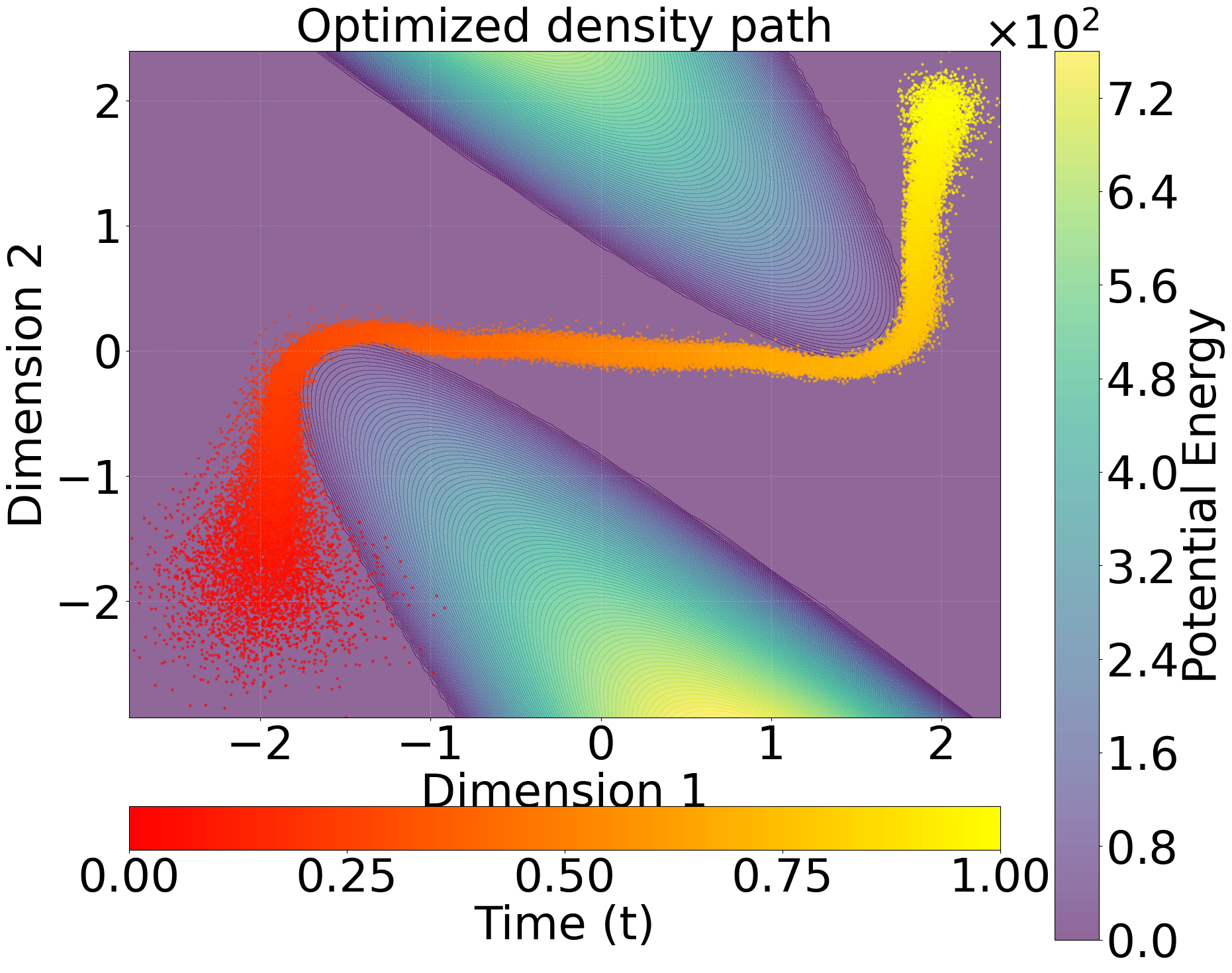}
    \label{fig:sol_geo_warmup}}
    \caption{Comparison of solutions with and without geodesic warmup.}
    \label{fig:geo_warmup}
\end{figure}

\begin{table}[ht]
    \centering
    \caption{Experimental set-up}
    \label{tab:experimental-setup-densities}
    \begin{tabular}{lccccc}
    \toprule
    & GMM & V-neck & S-tunnel & \multicolumn{2}{c|}{Opinion}\\
    \hline
    d & 2  & 2 & 2 &2 &1000\\
    K & 5 & 3 & 5 & 3 & 3 \\
    N & 30 & 60 & 30 & 20 & 20\\
    M & 1000 & 1000 & 1000 & 1000 & 5000\\
    Architecture & [2,256,4]  & [2,128,4] & [2,64,4] & [2,128,4] &[1000,128,4]\\
    Epochs & 15  & 15 & 18 & 10 & 10\\
    Coupling opt steps  & 20  & 20 & 20 & 20 & 20 \\
    Path opt. steps & 30  & 20&  30 & 20 & 20 \\
    Geodesic warmup steps & 100  & 100 & 100 & 200  & 200\\
    $\alpha$ & 100000  & 100000 & 100000 &100000 & 10000\\
    $(\kappa_0,\kappa_1,\kappa_2)$ & (50,0,0) & (3000,50,0) & (100 , 0 ,5) & (0,50,0) & (0,50,0)\\
    Mean $\rho_0$ & $e^{16i\pi}, i = 0,\ldots,7$ & $\begin{bmatrix} -11 \\ -1 \end{bmatrix}$ & $\begin{bmatrix} -2 \\ -2 \end{bmatrix}$ & \textbf{0} & \textbf{0}\\
    Mean $\rho_1$ & $e^{8i\pi}, i = 0,\ldots,3$ & $\begin{bmatrix} 11 \\ 1 \end{bmatrix}$ & $\begin{bmatrix}  2\\ 2 \end{bmatrix}$ & \textbf{0} &\textbf{0} \\
    Covariance of $\rho_0$ & \textbf{I} & 0.5\textbf{I} & 0.1\textbf{I} & diag($\begin{bmatrix}0.5\\0.25\end{bmatrix}$) & diag($\begin{bmatrix}4\\0.25\\\vdots\\0.25\end{bmatrix}$)\\
    Covariance of $\rho_1$ & \textbf{I} & 0.5\textbf{I} & 0.01\textbf{I}  & 3\textbf{I} & 3\textbf{I} \\
         \bottomrule
    \end{tabular}
\end{table}

\subsection{Ablation Control Points}
\label{app:cont_point}
In Figure \ref{fig:abl_control} we can see the comparison of the difference in the solution obtained by PDPO when varying the number of control points $K.$ As we can see, when the number of control points increases, the quality of the solution also increases. Observe that the transition between the obstacles is smoother as the number of control points increases. 
\begin{figure}[!h]
    \centering
    \subfloat[Optimized control points $K = 3$]{\includegraphics[width = 0.33\linewidth]{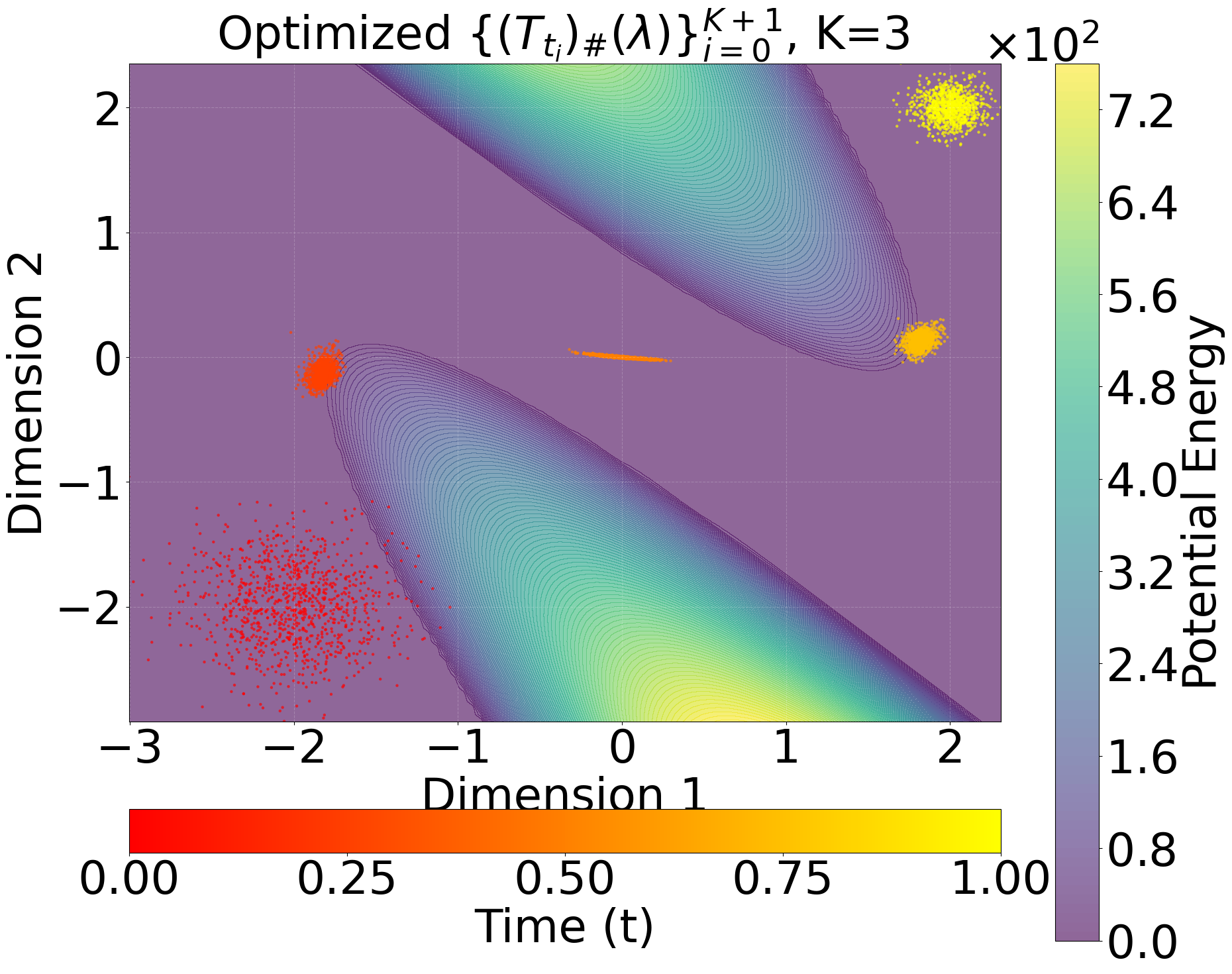}}
    \subfloat[Optimized control points $K = 7$]{\includegraphics[width = 0.33\linewidth]{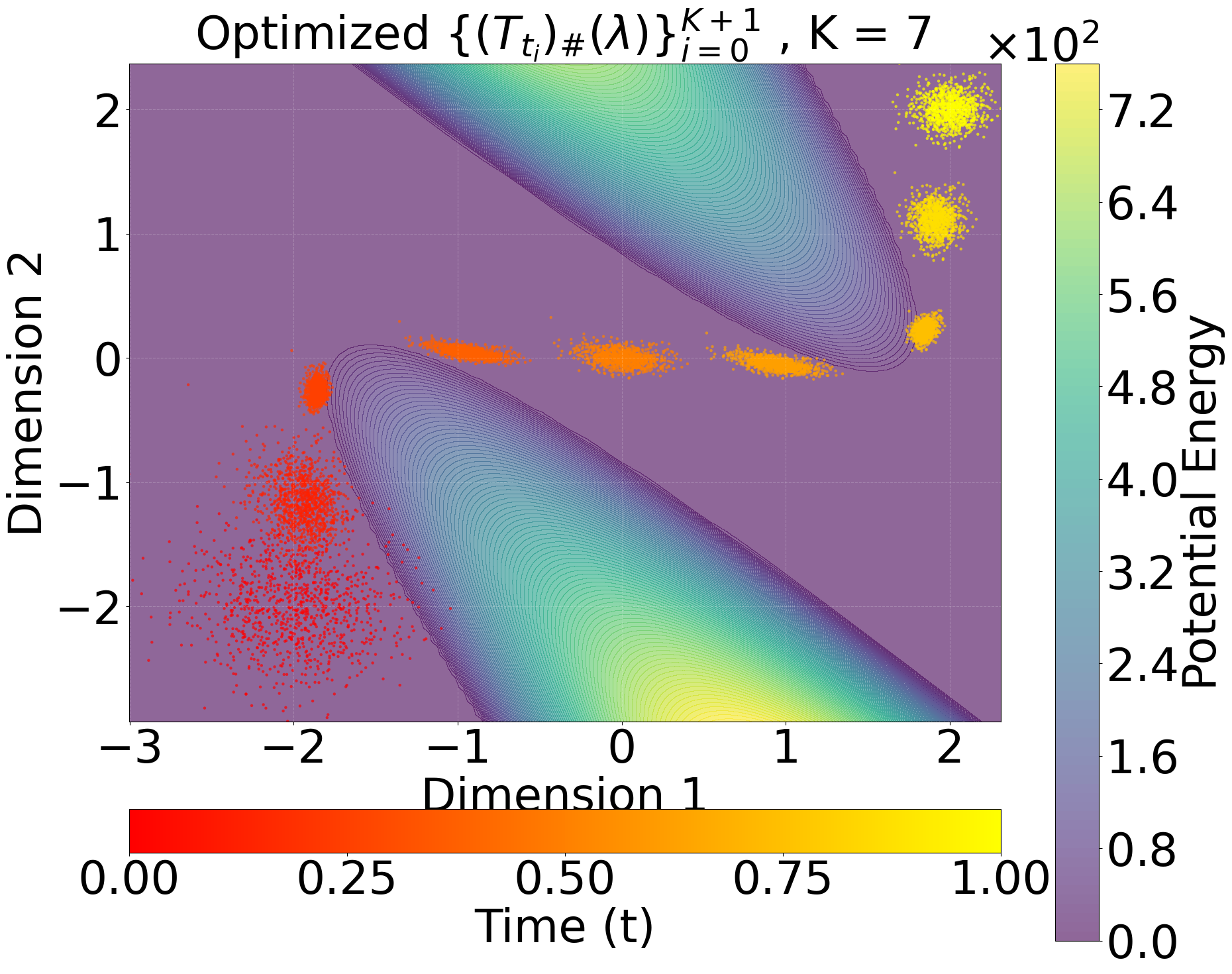}}
    \subfloat[Optimized control points $K = 9$]{\includegraphics[width = 0.33\linewidth]{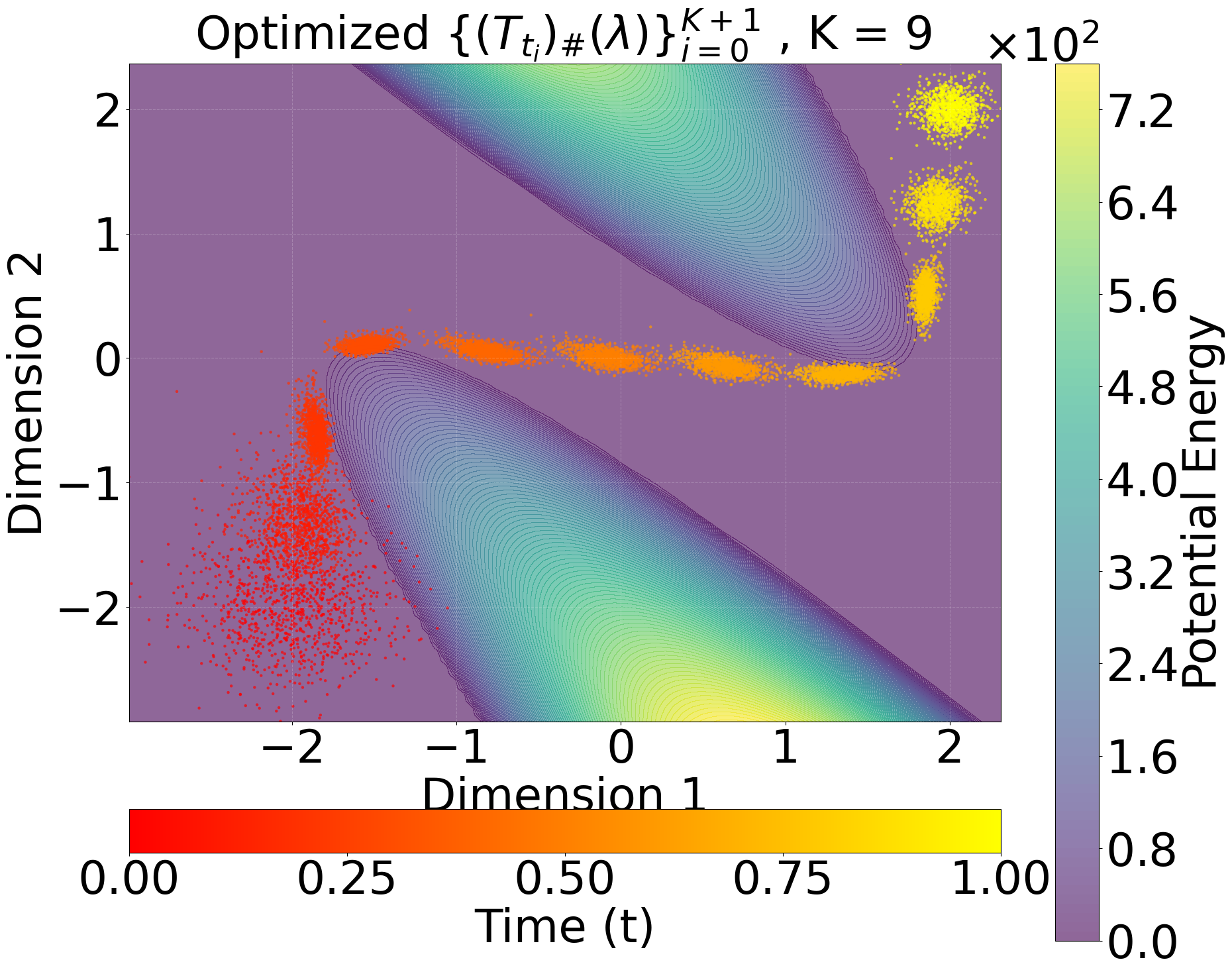}}\\
    \subfloat[Optimized density path $K = 3$]{\includegraphics[width = 0.33\linewidth]{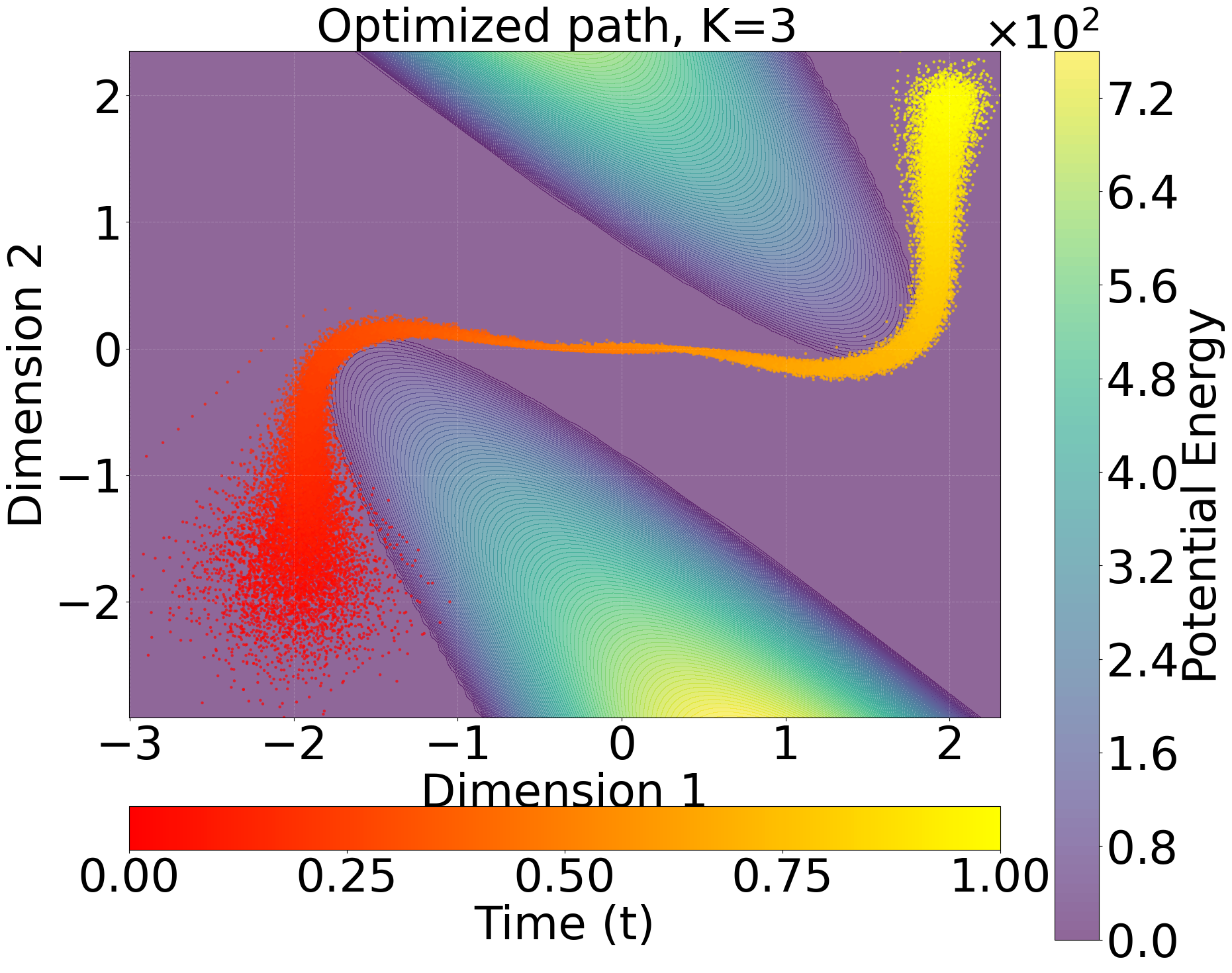}}
    \subfloat[Optimized density path $K = 7$]{\includegraphics[width = 0.33\linewidth]{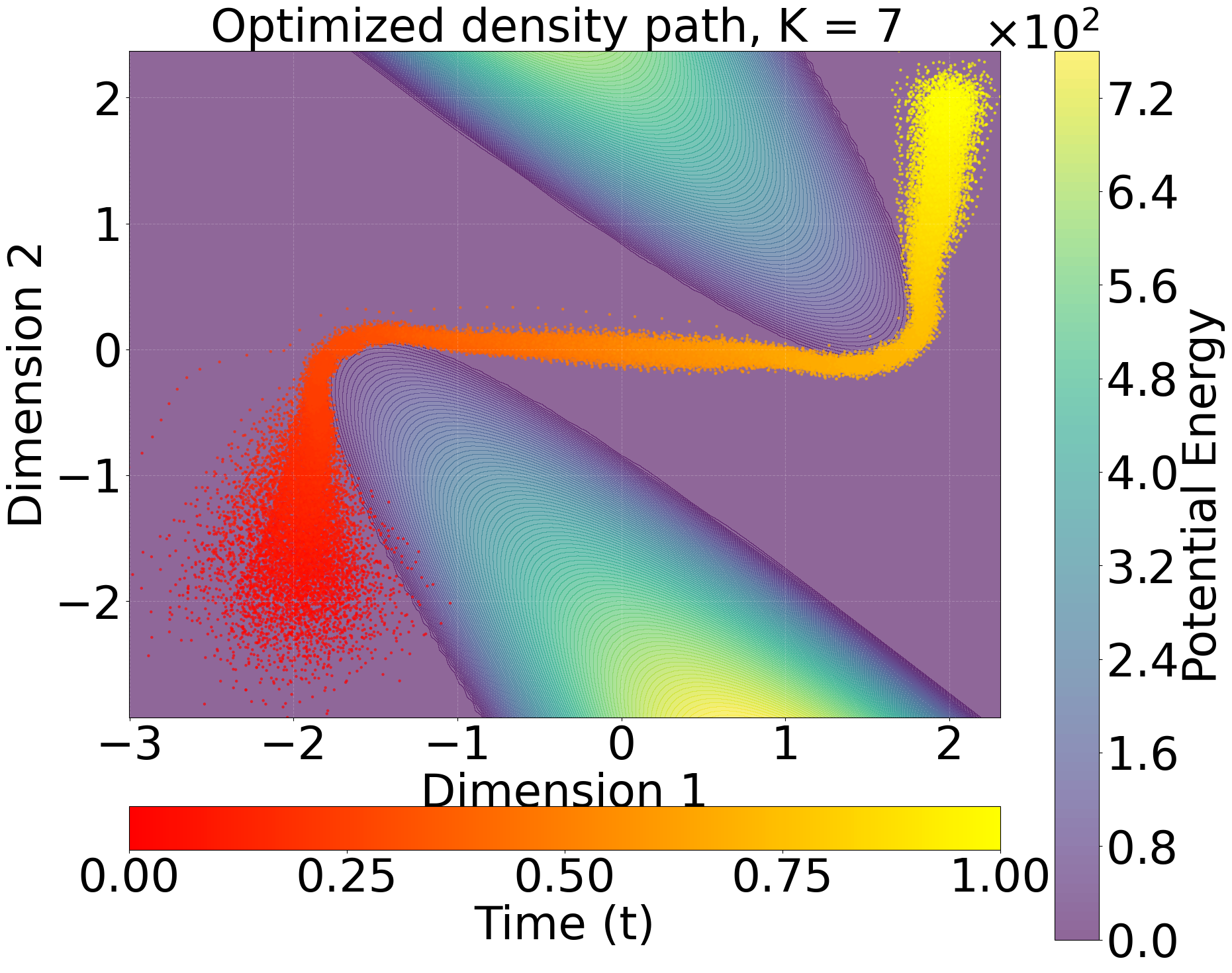}}
    \subfloat[Optimized density path $K = 9$]{\includegraphics[width = 0.33\linewidth]{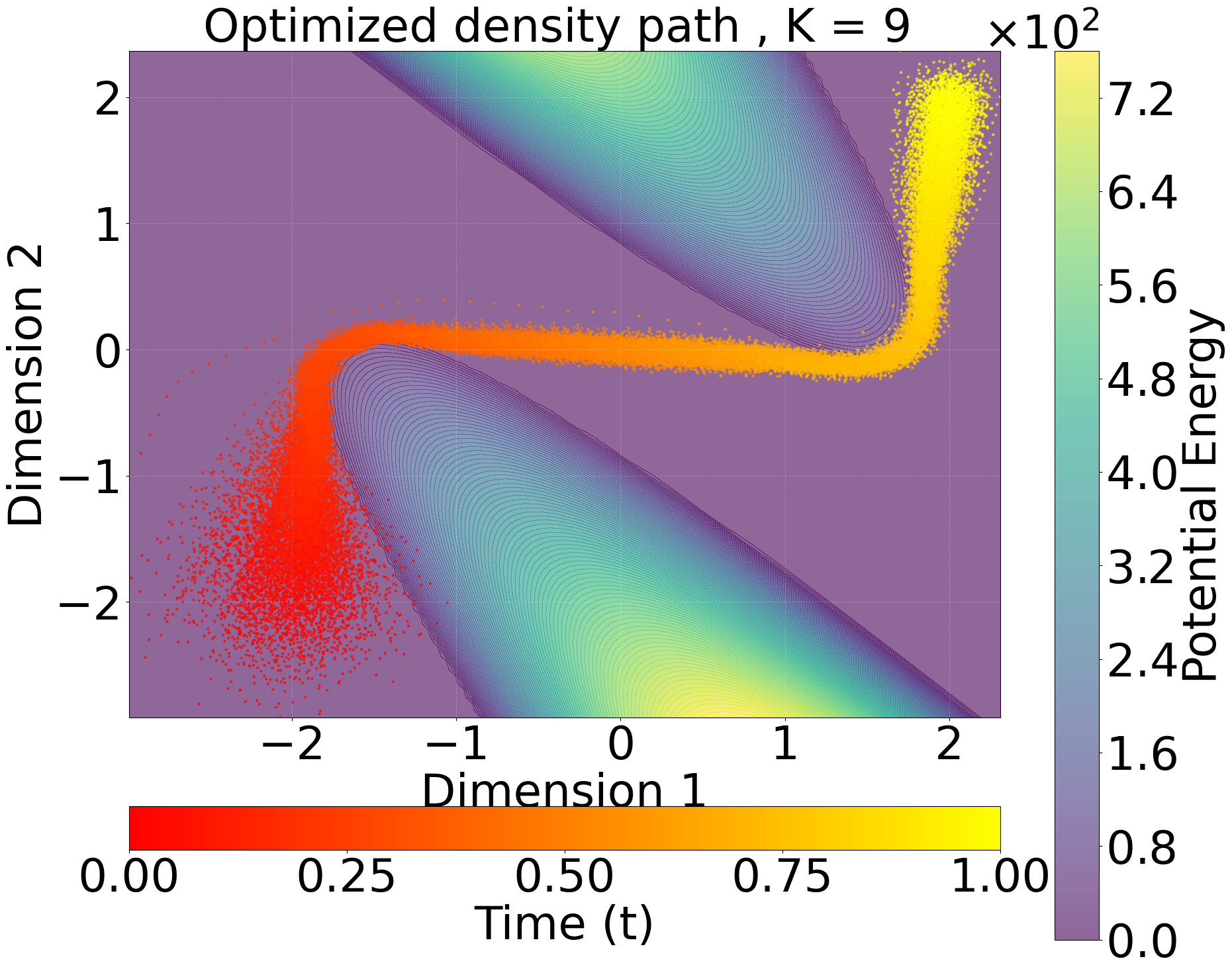}}
    \caption{Comparison of solutions with increasing number of control points $K$.}
    \label{fig:abl_control}
\end{figure}

\subsection{Obstacle Avoidance with Mean Field Interactions}
\label{app:mean_field}
Here we provide the definitions for the entropy $\mathcal{E}(\rho)$ and congestion potential $\mathcal{C}(\rho),$
\[\mathcal{E}(\rho) = \int_{\R^d}\log(\rho)\rho dx \quad \text{ and } \quad \mathcal{C}(\rho) = \int_{\R^{d\times d}}\frac{2}{\|x-y\|^2}\rho(x)\rho(y)dxdy .\]

\textbf{Remark:} In Table \ref{tab:comp_} we reported the $W_2$ distance of the boundaries for GSBM. The samples at the terminal time $t=1$ are generated by the forward solver, while the samples at the initial time $t=0$ are produced by the backward solver.

\textbf{S-curve}
The definition of the problem and source code were taken from \cite{lin_alternating_2021}. We refer to it for the definition of the obstacle. APAC-Net was more sensitive to the coefficient $\kappa_0$ and $\kappa_2,$; there we used $\kappa_0 = 5$ and $\kappa_2 = 1.$. The action reported in Table \ref{tab:comp_} for APAC-Net was obtained by evaluating its solution using our code with the values of $\kappa_0,\kappa_2$ reported in \ref{tab:experimental-setup-densities}.  In Figure \ref{fig:qoi-scurve} we show the $W_2$ distance with the boundaries, action, kinetic energy, and potential function along the training process. In these plots, an epoch is the completion of coupling + path optimization iterations. In these plots, we can clearly see that the feasibility condition is met from epoch 0 thanks to our pre-training strategy. 

Both schedulers in this experiment are StepLR. The learning rate for the coupling optimizer is $10^{-4}$ with a step size of $10,$  and $\gamma  = 0.9$. The learning rate for the path optimizer is $5\times10^{-4},$ step size of $10$ and $\gamma = 0.1$.

\begin{figure}
    \centering
    \includegraphics[width=\linewidth]{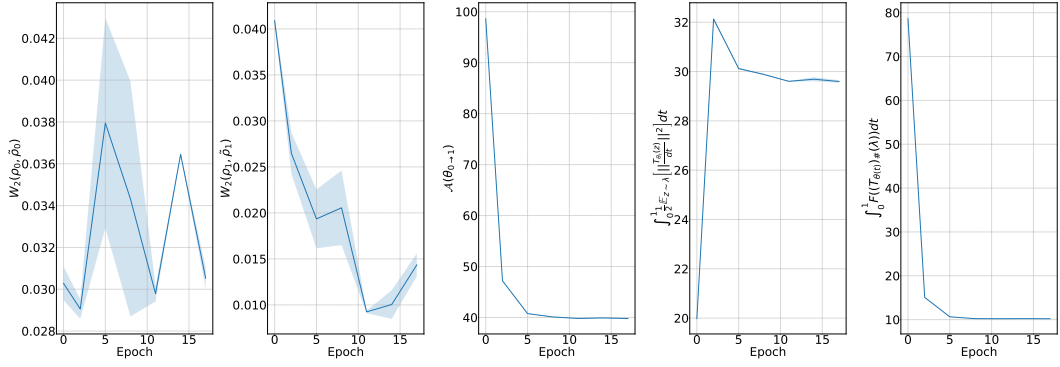}
    \caption{Quantities of interest with uncertainty estimates along the training process, \textbf{S-curve}.}
    \label{fig:qoi-scurve}
\end{figure}
\textbf{V-Neck}
The definition and source code were taken from \cite{liu_generalized_2024}. When GSBM's training time is constrained to match PDPO's training time, its action value is the same as before, but the boundary approximation is noticeably worse, $W_{2}(\rho_0,\tilde{\rho_0}) = 0.12\pm 0.002,$  $ W_{2}(\rho_1,\tilde{\rho_1}) = 0.107\pm 0.002$ 1h 15m 24s. 

In Figure \ref{fig:qoi-vneck} we report the quantities of interest along the training epochs. See \ref{fig:vneck_res} to see the pushforward of the three control points, and the comparison of the PDPO and GSBM solutions. 

Both schedulers in this experiment are StepLR. The learning rate for the coupling optimizer is $10^{-4}$ with a step size of $5,$  and $\gamma  = 0.1$ The learning rate for the path optimizer is $5\times10^{-3},$ step size of $5$ and $\gamma = 0.25$

\begin{figure}[h]
\centering
    \subfloat[Pushforward with control points]{\includegraphics[width = 0.33\linewidth]{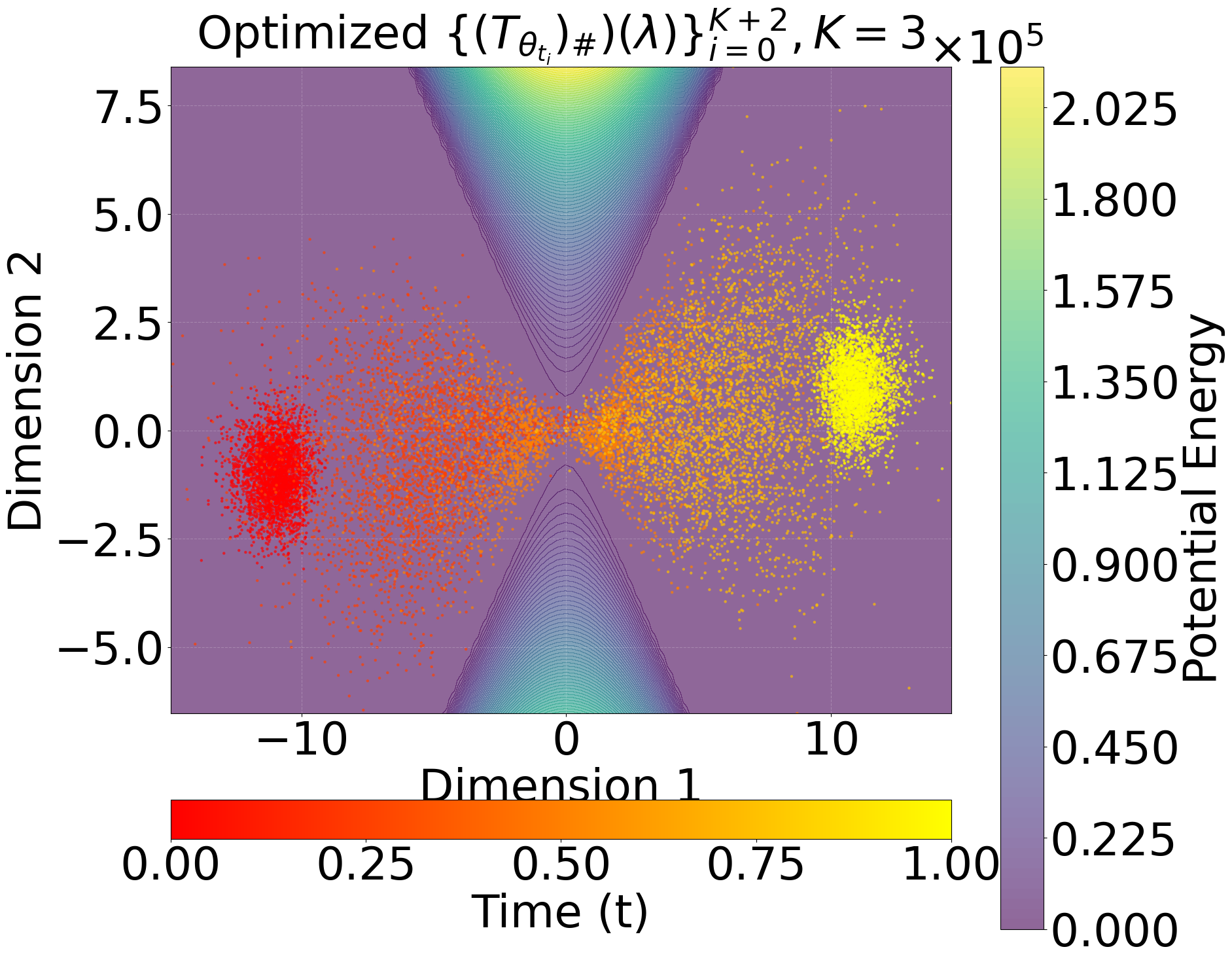}
    \label{fig:v_control}}
    \subfloat[PDPO]{\includegraphics[width = 0.33\linewidth]{Figures/v-neck/path_PDPO.png}
    \label{fig:v_pdpo} }
    \subfloat[GSBM]{\includegraphics[width = 0.33\linewidth]{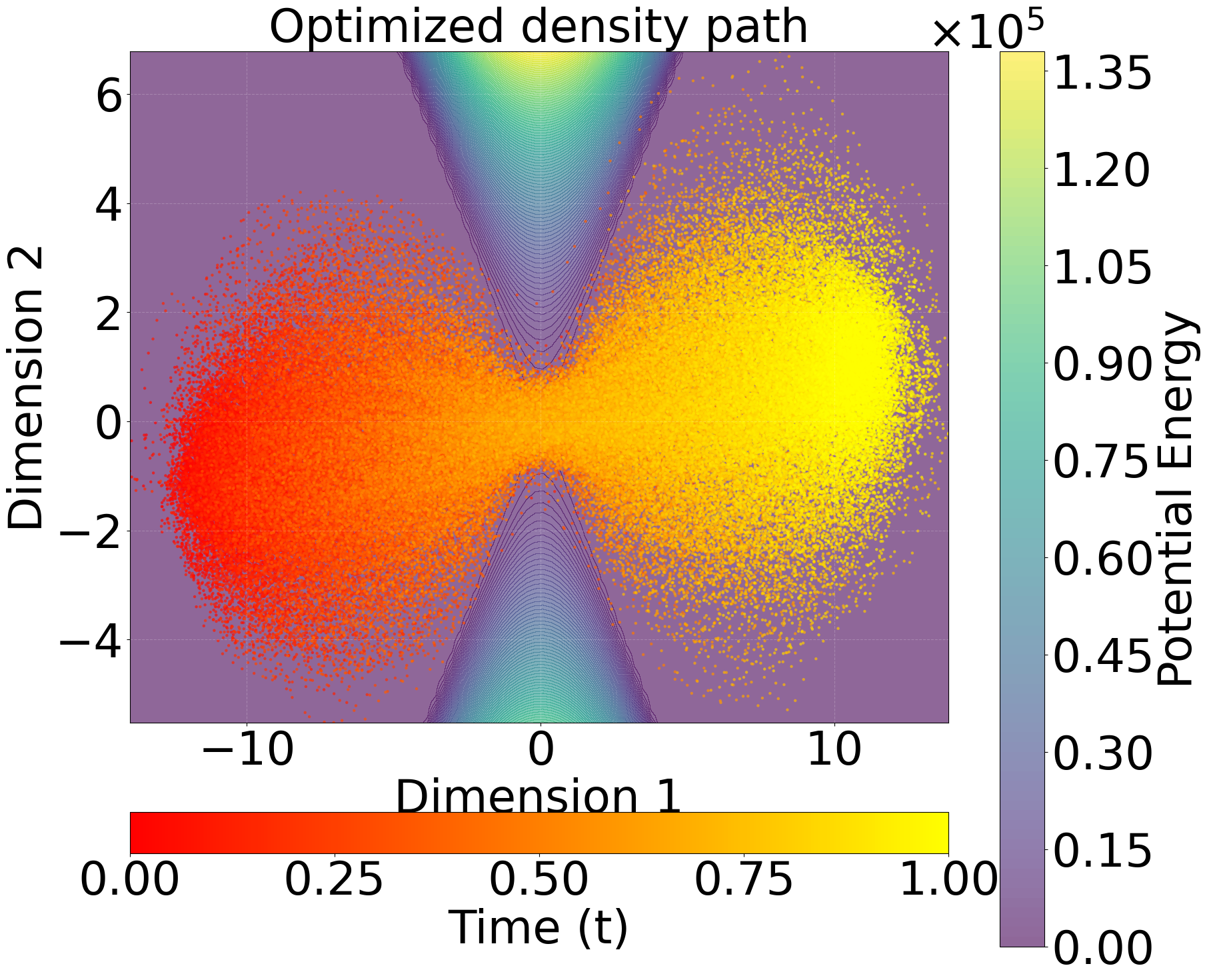}
    \label{fig:v_gsbm}}
    \caption{V-neck-E-FI a) Pushforward with control points. b) Pushforward with interpolated curve. c) Solution by GSBM.}
    \label{fig:vneck_res}
\end{figure}
\begin{figure}
    \centering
    \includegraphics[width=\linewidth]{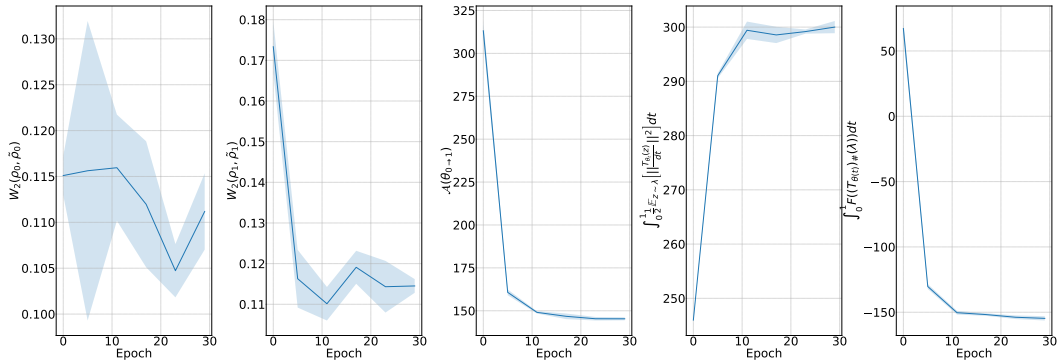}
    \caption{Quantities of interest with uncertainty estimates along the training process, \textbf{vneck}}
    \label{fig:qoi-vneck}
\end{figure}

\textbf{GMM}
The definition and source code were taken from \cite{liu_generalized_2024}.  See Figure \ref{fig:gmm_comp} for the comparison of the solutions. In this example, we use $\lambda = \rho_0.$ To guarantee this is satisfied, we define $\theta_0:= \textbf{0}$, the zero vector.  In Figure \ref{fig:qoi-gmm} we report the quantities of interest along the training epochs. See \ref{fig:gmm_comp} to see the pushforward of the five control points, and the comparison of the PDPO and GSBM solutions. 

The schedulers in this experiment are cosine for the coupling optimization and StepLR for the path optimization. The learning rate for the coupling optimizer is $5\times10^{-6},$ the setup for the cosine scheduler is $T_0 = 5,$ $T_{\text{mult}} = 2,$ $\eta_{\text{min}} = 1\times10^{-6}.$ The learning rate for the path optimization is $0.001,$ step size of $3,$ and $\gamma  = 0.9$
\begin{figure}
    \centering
    \subfloat[Pushforward with control points]{\includegraphics[width = 0.25\linewidth]{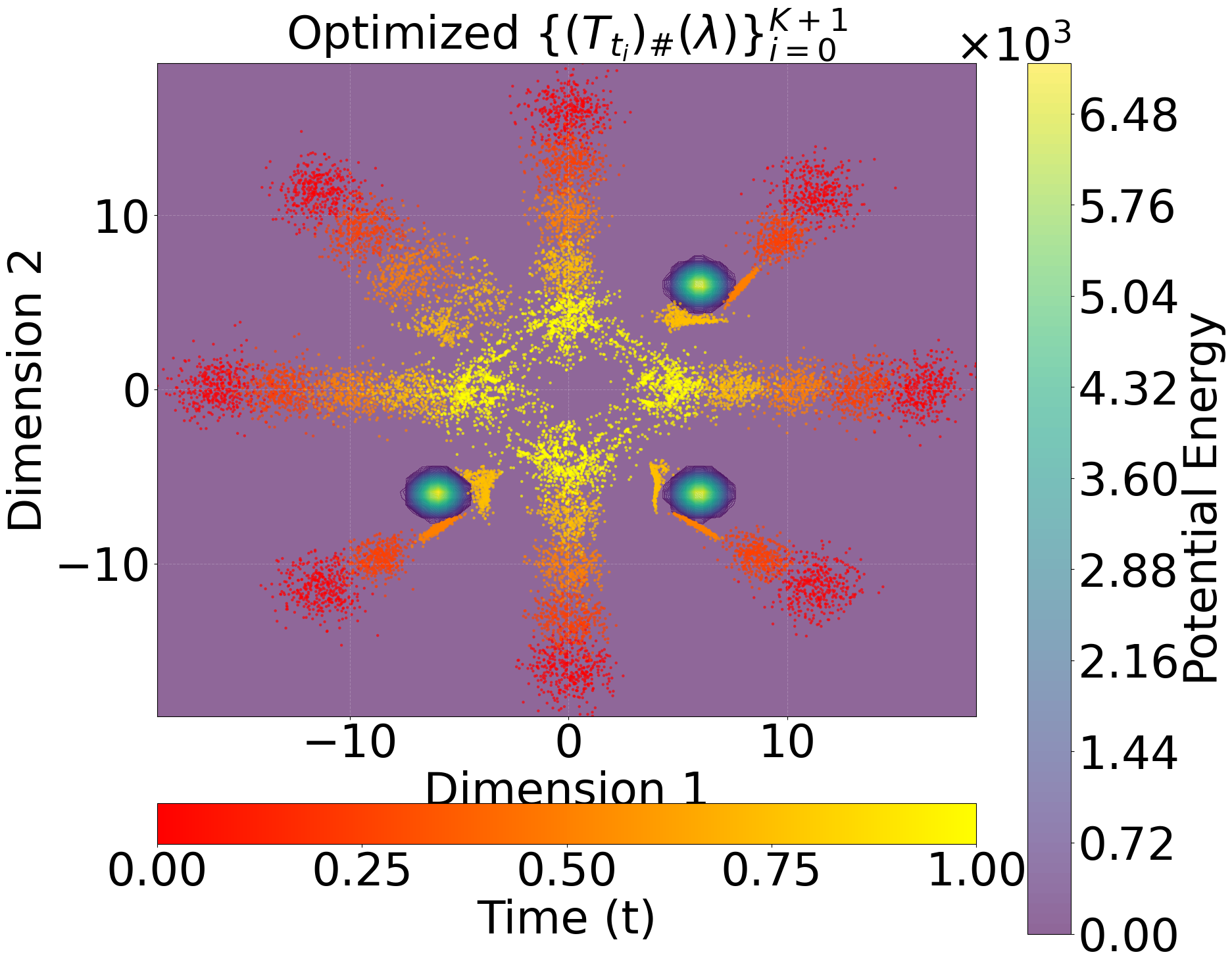}
    \label{fig:control_gmm}}
    \subfloat[PDPO]{\includegraphics[width = 0.25 \linewidth]{Figures/GMM/solution.png}\label{fig:pdpo_gmm}}
    \subfloat[GSBM]{\includegraphics[width = 0.25 \linewidth]{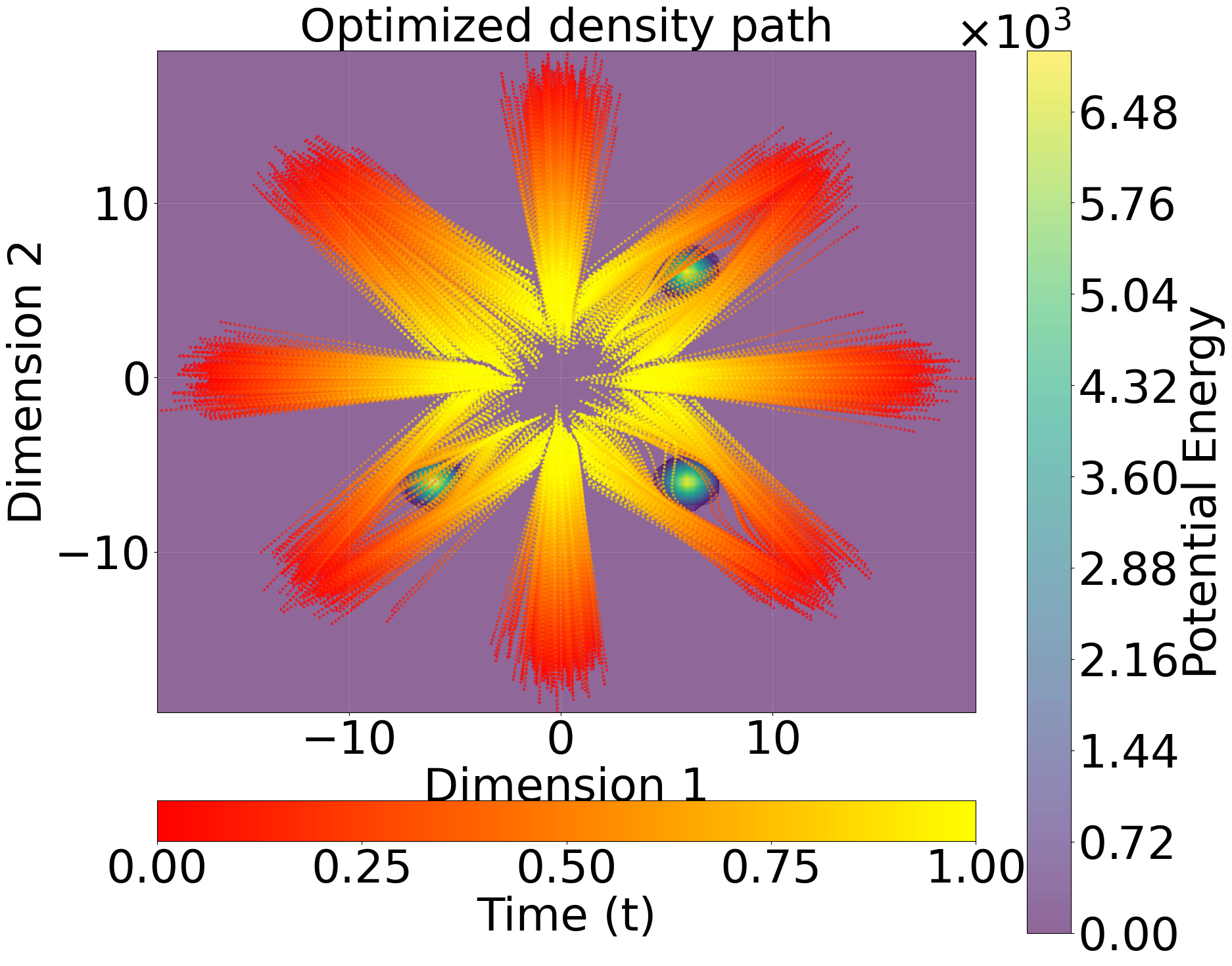}\label{fig:gsbm_fwd}}
    \subfloat[NLOT]{\includegraphics[width = 0.25 \linewidth]{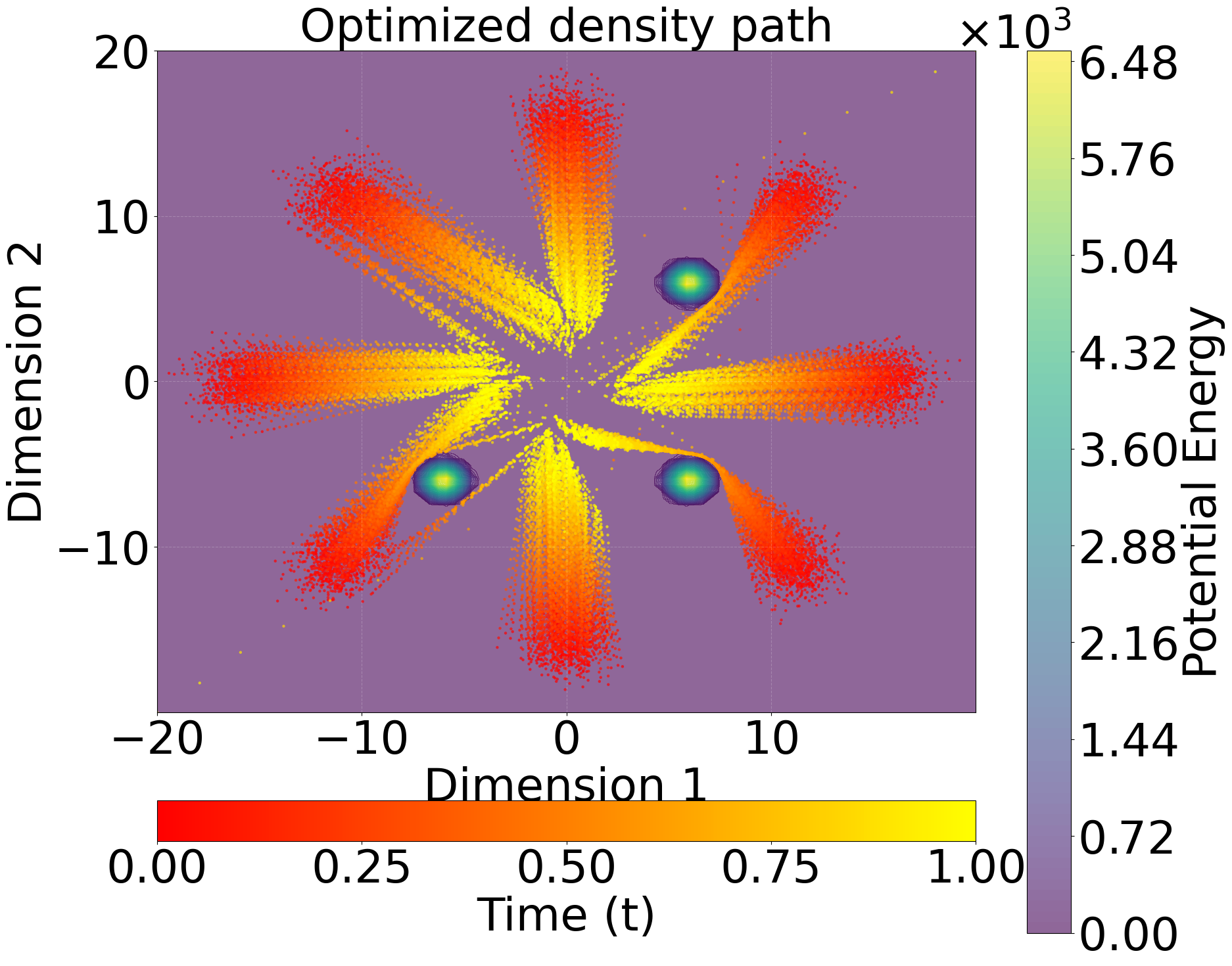}\label{fig:nlot}}
    \caption{Comparison for GMM example}
    \label{fig:gmm_comp}
\end{figure}

\begin{figure}
    \centering
    \includegraphics[width=\linewidth]{Figures/optimization/gmm.pdf}
    \caption{Quantities of interest with uncertainty estimates along the training process, \textbf{GMM}}
    \label{fig:qoi-gmm}
\end{figure}

\subsection{Opinion Depolarization}
\label{app:opinion}

In this problem, opinions $X(t) \in \mathbb{R}^{1000}$ evolve according to a polarizing dynamic:

\begin{equation*}
\frac{d x(t)}{dt}= f_{\text{polarize}}(x(t); p_t),
\end{equation*}
with 
\begin{align*}
    f_{\text{polarize}}(x; p_t, \xi_t) := \mathbb{E}_{y \sim p_t} [a(x, y, \xi_t) \bar{y}], \quad a(x, y, \xi_t) := 
\begin{cases}
    1 & \text{if } \text{sign}(\langle x, \xi_t \rangle) = \text{sign}(\langle y, \xi_t \rangle) \\
    -1 & \text{otherwise}
\end{cases}.
\end{align*}
When this dynamic evolves without intervention, opinions naturally segregate into groups with diametrically opposed views. However, the desired outcome is a unimodal distribution.

To solve this problem, we follow \cite{liu_generalized_2024} and incorporate the polarizing dynamics as a base drift or prior. See \cite{chen_relation_2016} for a reference in OT problems with prior velocity fields. Specifically, the action to optimize in the parameter space is defined by
\[\mathcal{A}_{\text{polarize}}(\theta_{0\to 1}) := \E{z}{\lambda}\left[\frac{1}{2}\|f_{\text{polarize}}(T_{\theta(t)}(z); (T_{\theta(t)})_{\#}(\lambda))- \frac{d}{dt}T_{\theta(t)}(z)\|^2\right]+\int_0^1F((T_{\theta(t)})_{\#}(\lambda))dt.\]

The potential energy term is a congestion cost that encourages particles to maintain a distance from each other. We follow the experimental setup in \cite{liu_generalized_2024} and \cite{liu_deep_2022}. Because of the dimension, we can only simulate deterministic dynamics, as specified in \cite{schweighofer_agent-based_2020}. In Figure \ref{fig:traj_opinion} we compare the optimized density trajectory and directional similarity histogram. These plots show that both methods obtain a nonpolarized trajectory.

\begin{figure}
    \centering
    \subfloat[PDPO]{\includegraphics[width = \linewidth]{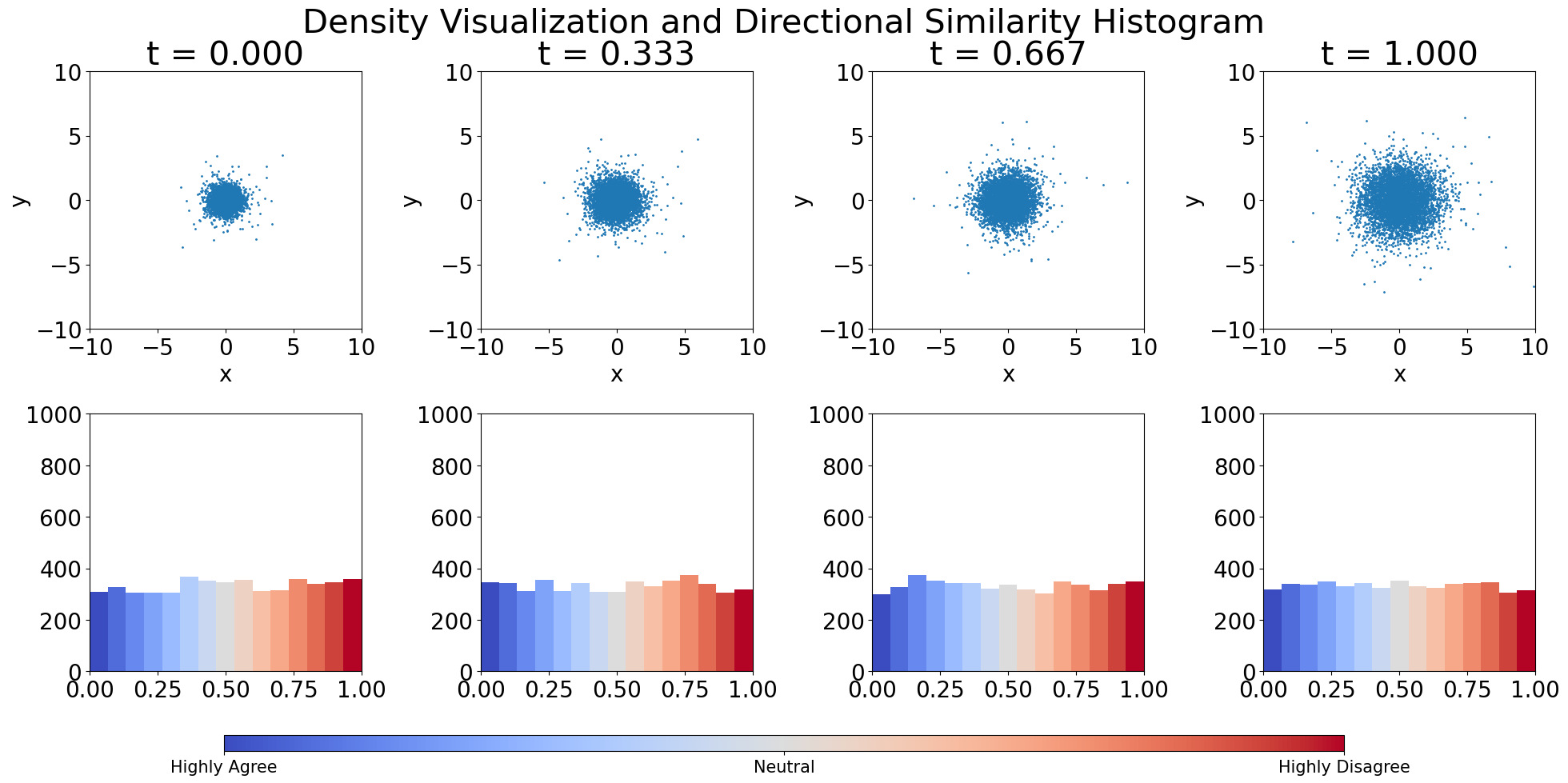}}\\
    \subfloat[GSBM]{\includegraphics[width = \linewidth]{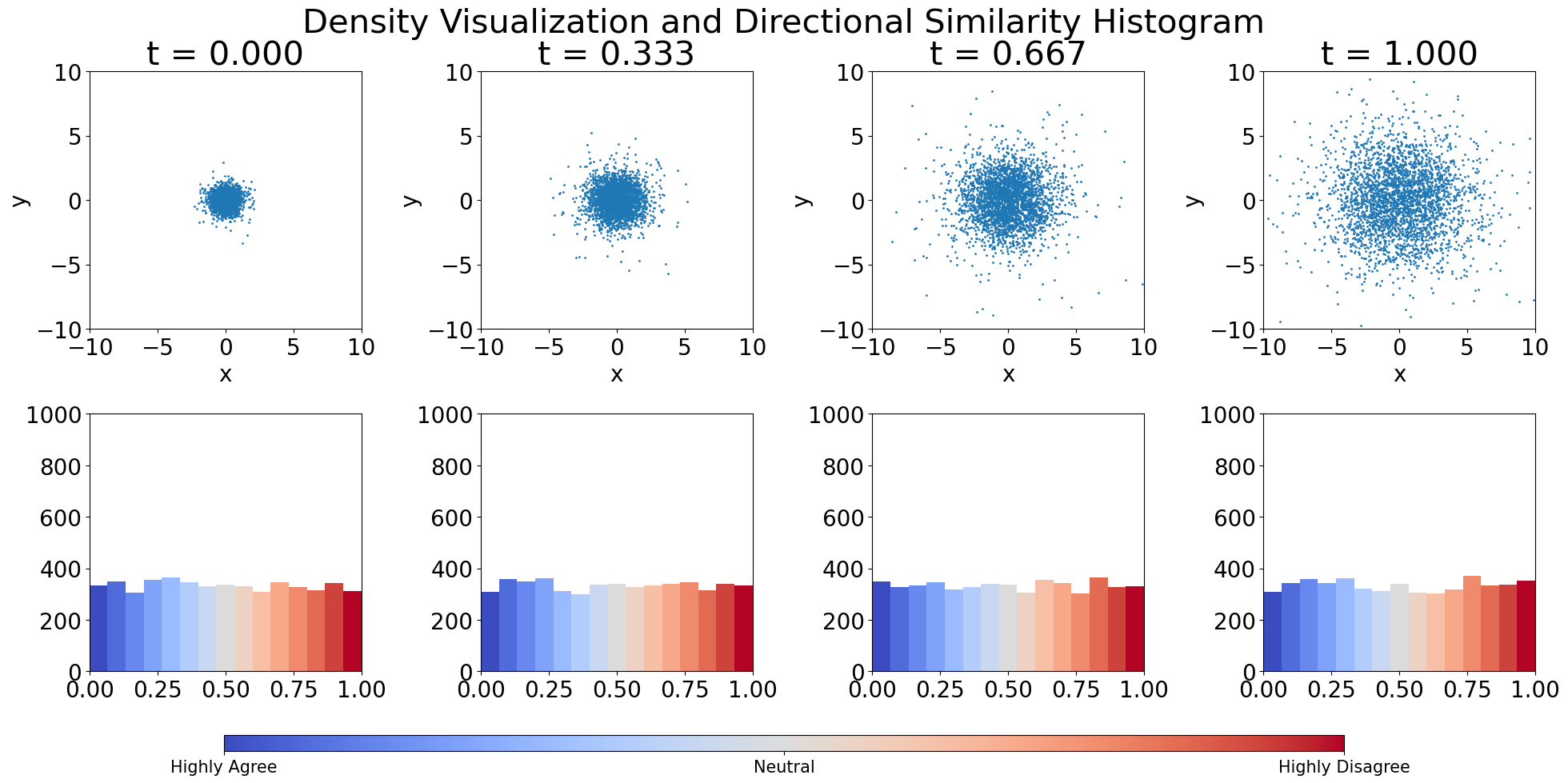}}
    \caption{Comparison of optimized opinion polarization dynamics.}
    \label{fig:traj_opinion}
\end{figure}

\subsection{50D Scrh\"odinger Bridge}
\label{app:verif_sb}
The SB between Gaussians has a closed-form solution \cite{bunne_schrodinger_2023}. In Table \ref{tab:gaussian-OT-verification}, we evaluate PDPO's accuracy by computing two different path discrepancies: $\int_0^1W_2^2(\rho_{\theta(t)},\rho(t))dt$ and $\int_0^1W_2^2(\rho_{\theta(t)},\tilde{\rho}(t))dt$. Here, $\rho(t)$ represents the theoretical SB solution between the original Gaussian boundaries, $\tilde{\rho}(t)$ is the SB solution between the approximated boundaries $(T_{\theta_0})_{\#} \lambda$ and $(T_{\theta_1})_{\#}\lambda$, and $\rho_{\theta(t)}$ is the density path recovered by PDPO. The first integral measures how closely PDPO approximates the true SB solution, while the second evaluates how well PDPO solves the boundary-approximated problem.

While the tabular results quantify the global accuracy of our method, Figure \ref{fig:50d_sb} provides a more intuitive visualization through a 2D projection of sample trajectories. In the figure, blue points represent samples from our PDPO solution $\rho_{\theta(t)}$, green points show the theoretical solution $\rho(t)$, and red points display the boundary-matched solution $\tilde{\rho}(t)$. This visualization demonstrates that PDPO accurately approximates individual particle trajectories throughout the transport process, confirming the effectiveness of our approach even in high-dimensional settings.

\begin{table}[ht]
\centering
\caption{SB between Gaussians}
\begin{tabular}{lccc}
\toprule
d & $\sigma$  & $\int_0^1W_2^2(\rho_{\theta(t)},\rho(t))dt$ & $\int_0^1W_2^2(\rho_{\theta(t)},\tilde{\rho}(t))dt$ \\
\midrule
50D & 1  & $0.778\pm 0.0018$ &  $0.789\pm 0.001$ \\
\bottomrule
\end{tabular}
\label{tab:gaussian-OT-verification}
\end{table}

\begin{figure}
    \centering
    \includegraphics[width=\linewidth]{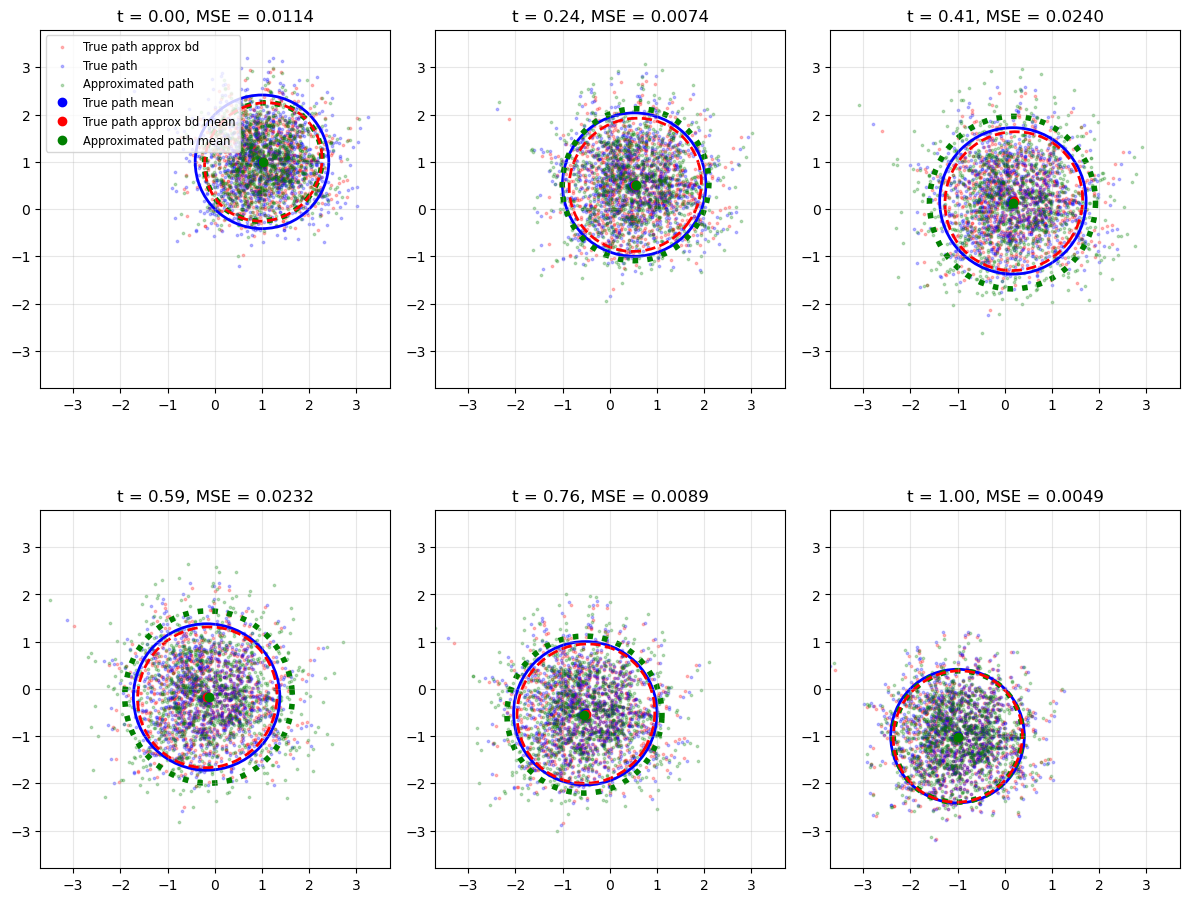}
    \caption{Particle comparison in two random directions.}
    \label{fig:50d_sb}
\end{figure}


\end{document}